\definecolor{Darkgreen}{rgb}{0,0.4,0}
\definecolor{gray}{gray}{0.7}
\setlist[enumerate]{label=(\alph*)}
\numberwithin{equation}{section}
\newcommand{\overbar}[1]%
  {\mkern 1.5mu\overline{\mkern-1.5mu#1\mkern-1.5mu}\mkern 1.5mu}
\newcommand{\E}{\ensuremath{\mathbb{E}}}
\newcommand{\N}{\ensuremath{\mathbb{N}}}
\newcommand{\R}{\ensuremath{\mathbb{R}}}
\newcommand{\Z}{\ensuremath{\mathbb{Z}}}
\newcommand{\ei}{\ensuremath{\mathsf{ei}}}
\newcommand{\es}{\ensuremath{\mathsf{es}}}
\newcommand{\ttE}{\ensuremath{\mathtt{E}}}
\newcommand{\ttP}{\ensuremath{\mathtt{P}}}
\newcommand{\ttQ}{\ensuremath{\mathtt{Q}}}
\renewcommand{\P}{\ensuremath{\mathbb{P}}}
\renewcommand{\d}{\ensuremath{\mathrm{d}}}
\newcommand{\RM}{\ensuremath{\mathtt{RM}}}
\newcommand{\rmin}{\ensuremath{\mathcal R}}
\newcommand{\lmax}{\ensuremath{\mathcal L}}
\newcommand{\rbound}{\ensuremath{R}}
\newcommand{\lbound}{\ensuremath{L}}
\DeclareMathOperator{\LM}{\mathtt{LM}}
\DeclareMathOperator{\LC}{\mathtt{LC}}
\DeclareMathOperator{\RC}{\mathtt{RC}}
\DeclareMathOperator{\Bad}{\mathtt{B}}
\DeclareMathOperator{\Free}{\mathtt{F}}
\DeclareMathOperator*{\essinf}{ess\,inf}
\DeclareMathOperator*{\esssup}{ess\,sup}
\newcommand*\diff{\mathop{}\!\mathrm{d}} 
\newcommand{\p}{\mathbb{P}}
\newcommand{\Eprob}{\texttt{E}}
\newcommand{\given}{\, |\, }
\newcommand{\mtilde}{m}
\newcommand{\mover}{\overline{m}}
\newcommand{\F}{\mathcal{F}}
\newcommand{\tend}[2]{\displaystyle\mathop{\longrightarrow}_{#1\rightarrow#2}}
\newcommand{\parenthezises}[1]{\arabic{#1}}
\newtheorem{theorem}{Theorem}[section]
\newtheorem{claim}[theorem]{Claim}
\newtheorem{lemma}[theorem]{Lemma}
\newtheorem{proposition}[theorem]{Proposition}
\theoremstyle{remark}
\newtheorem{remark}[theorem]{Remark}
\theoremstyle{definition}
\newtheorem{definition}[theorem]{Definition}
\newcommand{\ignore}[1]{{}}
\begin{document} 

\author{
  Jiří Černý%
  \thanks{Department of mathematics and computer science,
    University of Basel,
    Spiegelgasse 1,
    4051 Basel, Switzerland.
    Email: \url{jiri.cerny@unibas.ch}}
  \and
  Alexander Drewitz%
  \thanks{Universität zu Köln,
    Mathematisches Institut,
    Weyertal 86--90,
    50931 Köln, Germany.
    Email: \url{{drewitz,lschmit2}@math.uni-koeln.de}}
  \and
  Lars Schmitz$^{\dag}$%
}

\title{(Un-)bounded transition fronts for the parabolic Anderson model and
  the randomized F-KPP equation}

\date{\today}

\maketitle

\begin{abstract}
  We investigate the uniform boundedness of the fronts of the solutions
  to the randomized Fisher-KPP equation and to its linearization, the
  parabolic Anderson model. It has been known that for the standard
  (i.e.~deterministic) Fisher-KPP equation, as well as for the special
  case of a randomized Fisher-KPP equation with so-called ignition type
  nonlinearity, one has a uniformly bounded (in time) transition front.
  Here, we show that this property of having a uniformly bounded
  transition front fails to hold for the general randomized Fisher-KPP
  equation. Nevertheless, we establish that this property does hold true
  for the parabolic Anderson model.
\end{abstract}


\section{Introduction}
\label{sec:intro}
We consider the random partial differential equation
\begin{align*}
  \label{eq:KPP}
  \tag{F-KPP}
  \begin{split}
    w_t(t,x) &= \frac{1}{2}w_{xx}(t,x) +  \xi(x,\omega)\, F(w(t,x)),
    \qquad (t,x)\in(0,\infty)\times\R,\ \omega\in\Omega, \\
    w(0,\cdot) &= \mathds{1}_{(-\infty,0]}.
  \end{split}
\end{align*}
In our specific setting, $(\xi(x))_{x\in\R}=(\xi(x,\omega))_{x\in\R}$,
$\omega\in\Omega$, is a stochastic process on a probability space
$(\Omega,\F,\P)$ fulfilling  suitable mixing and sample path regularity
conditions (see Section~\ref{sec:model}), and the non-linearity $F$ is
generated by the probability generating function belonging to branching
Brownian motion, see condition \eqref{eq:standard_cond} below
\eqref{eq:mom_offspr}.

The investigation of \eqref{eq:KPP} for the homogeneous case $\xi\equiv1$
has a long history, dating back to the seminal works of Fisher
\cite{fisher1937} and Kolmogorov, Petrovskii and Piscunov
\cite{kolmogorov1937}. The equation has found a plethora of applications,
such as describing the dynamics of a randomly mating diploid population
in a one-dimensional habitat, or also to model flame propagation, see
\cite{aronson_diffusion}.

It is well-known, see \cite[Theorem~14]{kolmogorov1937}, that in the
homogeneous case $\xi\equiv1$ the solution $w$ of \eqref{eq:KPP}
converges to a \emph{traveling wave solution}. More precisely, there
exists a function ${(0,\infty) \ni t \mapsto m(t)}$ such that
\begin{align}
  w(t,m(t) + \cdot\,) \tend{t}{\infty} g\quad\text{uniformly,}
  \label{eq:travelling_wave}
\end{align}
for some function $g:\R\to[0,1]$ with $g(x)\tend{x}{-\infty}0$ and
$g(x)\tend{x}{\infty}1$, and which is unique up to spatial translations.
In this context, the function $m(t)$ is usually referred to as the
\emph{position of the wave}. The convergence in
\eqref{eq:travelling_wave} implies that the front of the solution to
\eqref{eq:KPP} for the case $\xi\equiv 1$ is bounded, i.e.~for every
$\varepsilon\in(0,1/2)$ there exist $\underline{x},\overline{x}\in\R$,
such that for all $t$ large enough,
\begin{equation}
  \label{eq:bounded_front_hom}
  \inf_{x\leq \underline{x}}\, w(t,x+m(t)) \geq 1-\varepsilon
  \quad\text{and}\quad
  \sup_{x\geq\overline{x}}\, w(t,x+m(t)) \leq \varepsilon.
\end{equation}

Therefore, a question arising naturally in our context is whether a
behavior similar to \eqref{eq:bounded_front_hom} is observed in the
setting of a random nonlinearity in \eqref{eq:KPP} as well. It turns out
that in the investigation of this question, for a variety of reasons the
linearization of \eqref{eq:KPP}, which goes under the name  parabolic
Anderson model and which is of independent interest,
\begin{align*}
  \label{eq:PAM}\tag{PAM}
  \begin{split}
    u_t(t,x) &=  \frac12 u_{xx}(t,x) + \xi(x,\omega)\,  u(t,x),
    \qquad (t,x)\in(0,\infty)\times\R,\ \omega\in\Omega,\\
    u(0,x) &= u_0(x),\qquad x \in \R,
  \end{split}
\end{align*}
plays an important role as well.

\section{Model and results}
\label{sec:model}

We will assume $\xi=(\xi(x))_{x\in\mathbb{R}}$ to be a stochastic process
on a probability space $(\Omega,\F,\p)$ having   H\"{o}lder continuous
paths. I.e., there exists $\alpha=\alpha(\xi)>0$ and $C=C(\xi)>0$, such that
\begin{equation*}
  \label{eq:hoelder_cont}
  \left| \xi(x)-\xi(y) \right| \leq C\,  |x-y|^\alpha\quad \forall x,y\in \R.
  \tag{H\"OL}
\end{equation*}
We will consider throughout the standard model of $\Omega$ being the
space of H\"older continuous functions and $\mathcal F$ to be the $\sigma$-algebra
generated by point evaluations.
Furthermore, we assume the following
conditions to be fulfilled:
\begin{itemize}
  \item $\xi$ is \emph{uniformly bounded away from $0$ and $\infty$}:
  \begin{equation*}
    \label{eq:POT}
    0<
    \ei := \essinf_\omega  \xi (x,\omega ) < \esssup_\omega  \xi (x,\omega ) =: \es < \infty
    \quad \text{ for all }x\in\mathbb{R};
    \tag{BDD}
  \end{equation*}

  \item $\xi$ is \emph{stationary}: For every $h\in\mathbb{R},$
  \begin{equation*}
    \label{eq:STAT}
    (\xi(x))_{x\in\R} \text{ and } (\xi(x+h))_{x\in\R}
    \text{ have the same distribution;}
    \tag{STAT}
  \end{equation*}
  \item $\xi$ fulfills a  \emph{$\psi$-mixing} condition:  Let
  $\F_x:=\sigma(\xi(z):z\leq x)$ and $\F^y:=\sigma(\xi(z):z\geq y)$,
  $x,y\in\mathbb{R},$ and assume that there exists a continuous,
  non-increasing function $\psi:[0,\infty)\to[0,\infty)$, such that for
  all $j\leq k$ as well as integrable $\F_j$-measurable $X$ and
  integrable $\F^k$-measurable $Y,$ we have
  \begin{equation*}
    \label{eq:MIX}
    \tag{MIX}
    \begin{split}
      \big| \E\big[X-\E[X] \given \F^{k}\big]\big| &\leq \E[|X|]\cdot \psi(k-j),\\
      \big| \E\big[Y-\E[Y] \given  \F_j\big]\big| &\leq \E[|Y|]\cdot \psi({k-j}),
      \quad \text{and}   \\
      \sum_{k=1}^\infty &\psi(k)<\infty.
    \end{split}
  \end{equation*}
\end{itemize}
Note that \eqref{eq:MIX} implies the ergodicity of $\xi$ with respect to
the shift operator $\theta_y$  acting on $\Omega$ via
$\xi(\cdot)\circ\theta_y=\xi(\cdot+y)$, $y\in\R$.

In order to specify the initial conditions for \eqref{eq:PAM} under
consideration, for $\delta'\in(0,1)$ and $C'>1$ consider the condition
\begin{equation*}
  \label{eq:PAM_initial}
  \tag{PAM-INI}
  \begin{split}
    & \delta'\mathds{1}_{[-\delta',0]} \leq  u_0 \leq
    C'\mathds{1}_{(-\infty,0]},
  \end{split}
\end{equation*}
and we define the class of initial conditions to \eqref{eq:PAM} as
\begin{align*}
  \mathcal{I}_{\mathrm{PAM}}
  := \mathcal{I}_{\mathrm{PAM}}(\delta', C')
  &:= \big\{ u_0:\R\to[0,\infty)\text{ measurable}:\ u_0
    \text{ fulfills \eqref{eq:PAM_initial} for  }\delta' \text{ and }C' \big\}.
\end{align*}

In order to describe the admissible non-linearities for \eqref{eq:KPP},
let $(p_k)_{k\in\N}$ be an arbitrary sequence of reals in $[0,1]$ such that
\begin{equation}
  \sum_{k=1}^\infty p_k=1,\quad \sum_{k=1}^\infty kp_k = 2,
  \quad \text{and} \quad
  \sum_{k=1}^\infty k^2 p_k =: m_2 <\infty.
  \label{eq:mom_offspr}
\end{equation}
Then define $F:\ [0,1] \to [0,1]$ via
\begin{equation*}
  F(u) :=  (1-u) - \sum_{k=1}^\infty p_k(1-u)^k ,
  \quad u\in[0,1].
  \label{eq:standard_cond}\tag{PROB}
\end{equation*}
In passing, we note that $F'(0)=1$. The reason for considering this type
of non-linearity is its suitability for being investigated using
techniques from branching processes. In particular, the solutions to
\eqref{eq:KPP} can then be expressed as a functionals of a branching
Brownian motion, see Proposition~\ref{prop:KPPBRM}.

On top of the above, we need a further technical condition to be
fulfilled. In order to be able to formulate it, note that
Lemma~\ref{le:annealed_lgmf} states the existence of a critical velocity
$v_c \ge 0$ and Proposition~\ref{prop:lyapunov_tight} that of another
velocity $v_0 > 0;$ here, the former pertains to the characteristics of
the Lyapunov exponent while, under suitable assumptions, the latter
essentially is the speed of the front of the solutions to \eqref{eq:PAM}
and \eqref{eq:KPP}. In order for our approach to be effective, we need to
perform a change of measure that requires
\begin{equation*}
  \label{eq:VEL}
  \tag{VEL}
  v_0>v_c
\end{equation*}
to be fulfilled.
For the time being, we content ourselves with referring to Section \ref{sec:discussion}, where we argue that there do exist potentials fulfilling \eqref{eq:VEL}, alongside all other conditions required for our results to hold. For further details and a more profound discussion of condition \eqref{eq:VEL}, as well as for examples of potentials which do or do not entail \eqref{eq:VEL} to be satisfied, we refer to \cite{DreSchmi19}.

In order to investigate the position of the front, we  introduce for
$\varepsilon \in (0,1)$, $M>0$ and $t\ge 0$ the quantities
\begin{equation}
  \begin{split}
    \mtilde^\varepsilon(t) &
    := \sup\{ x \in \R \, : \, w(t,x) \ge \varepsilon\},
    \label{eq:def_front_KPP}\\
    \mtilde^{\varepsilon,-}(t) &
    :=  \inf\{ x\geq 0 \, : \,  w(t,x)\leq \varepsilon\}, \\
    \mover^M(t) &
    := \sup\{ x \in \R \, : \, u(t,x) \ge M\},\\
    \mover^{M,-}(t) &
    := \inf\{ x\geq 0 \, : \,  u(t,x)\leq M\}.
  \end{split}
\end{equation}
Note that all these quantities are random variables (and their
  distributions depend on the initial conditions of the respective
  equations).

\begin{definition}
  \label{def:transitionFronts}
  The solution to \eqref{eq:KPP} is said to have a \emph{uniformly
    bounded transition front} if for each $\varepsilon  \in (0,\frac12)$
  there exists a constant $C_\varepsilon \in (0,\infty)$ such that $\P$-a.s.,
  for all $t$ large enough we have
  \begin{equation*}
    \mtilde^\varepsilon(t) - \mtilde^{1-\varepsilon,-}(t) \le
    C_\varepsilon.
  \end{equation*}
  The solution to  \eqref{eq:PAM} is said to have a \emph{uniformly
    bounded transition front} if for all $\varepsilon, M \in (0,\infty)$
  with $\varepsilon < M$, there exists a constant
  $C_{\varepsilon, M} \in (0,\infty)$ such that $\P$-a.s.,  for all $t$
  large enough,
  \begin{equation}
  \label{eq:const_tf_PAM}
    \overline m^\varepsilon(t) - \mover^{M,-}(t) \leq C_{\varepsilon,M}.
  \end{equation}
\end{definition}
We can now state our two main results. The first one is for the solution
to \eqref{eq:PAM} and states that its transition front stays bounded
uniformly in time.

\begin{theorem}
  \label{thm:PAMBdd}
  If \eqref{eq:hoelder_cont},
  \eqref{eq:POT}, \eqref{eq:STAT}, \eqref{eq:MIX} and \eqref{eq:VEL} are fulfilled, the  solution to \eqref{eq:PAM} has a
  uniformly bounded transition front. Furthermore, for $\delta', C' >0$
  fixed,  the corresponding constant $C_{\varepsilon,M}$ in
  \eqref{eq:const_tf_PAM} is independent of
  $u_0\in\mathcal{I}_{\mathrm{PAM}}(\delta', C')$.
\end{theorem}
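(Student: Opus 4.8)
The plan is to sandwich the solution of \eqref{eq:PAM} between deterministic-time-multiples of the front position of a canonical solution and then exploit the Feynman--Kac representation in terms of Brownian motion in the potential $\xi$. The key point is that the difference $\overline m^\varepsilon(t) - \mover^{M,-}(t)$ should be controlled by a quantity that does \emph{not} see the absolute position of the front but only the local shape of $u(t,\cdot)$ near the front, and that this local shape is governed by the potential landscape in a window of bounded width.

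First I would reduce to a single initial condition: by linearity of \eqref{eq:PAM} and monotonicity in $u_0$, any $u_0 \in \mathcal I_{\mathrm{PAM}}(\delta',C')$ is squeezed between $\delta' \mathds 1_{[-\delta',0]}$ and $C' \mathds 1_{(-\infty,0]}$, and the corresponding solutions differ only by a global multiplicative constant factor in the comparison, so it suffices to prove the statement for, say, $u_0 = \mathds 1_{(-\infty,0]}$, absorbing $\log(C'/\delta')$-type terms into $C_{\varepsilon,M}$; this also yields the claimed uniformity in $u_0$. Next I would invoke Proposition~\ref{prop:lyapunov_tight} to get that $\mover^{M,-}(t)/t \to v_0$ and (the analogous tightness statement) that $\overline m^\varepsilon(t)/t \to v_0$ as well, so that both fronts travel at the same linear speed; the whole content is the \emph{uniform-in-time} bound on their difference, not the leading order.

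The heart of the argument is a renewal/regeneration scheme along the front, made possible by condition \eqref{eq:VEL}: performing the tilt by the velocity $v_0 > v_c$ (the change of measure alluded to before \eqref{eq:VEL}), the tilted potential has a strictly positive drift away from criticality, which makes the relevant hitting-time/partition-function sums summable and produces regeneration times with good tails. Concretely, I would show that with high probability the event $\{\overline m^\varepsilon(t) - \mover^{M,-}(t) > \ell\}$ forces the Feynman--Kac functional $u(t,x)$ to be simultaneously $\ge M$ at $x = \overline m^\varepsilon(t)$ and to have dropped below $\varepsilon$ a distance $\ell$ to the left; using the Markov property of \eqref{eq:PAM} in space together with the $\psi$-mixing \eqref{eq:MIX} to decouple the potential on disjoint blocks, a Borel--Cantelli argument over the (linearly many) candidate integer front locations up to time $t$ gives that $\ell$ can be taken bounded, provided $\ell = C_{\varepsilon,M}$ is chosen so that the per-block failure probability beats the union bound. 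Monotonicity of \eqref{eq:PAM} in the potential (using \eqref{eq:POT} to replace $\xi$ by $\ei$ or $\es$ on the relevant block) is what converts the spatial drop of $u$ over a window of width $\ell$ into an exponentially small probability in $\ell$.

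The main obstacle I anticipate is the decoupling step: the front position $\overline m^\varepsilon(t)$ is itself a function of the whole potential to the left, so conditioning on it and then claiming independence of the potential to its right is circular. The standard fix — which I would implement — is to not condition on the random front but to run a union bound over all deterministic integer locations $k \in [(v_0-\eta)t, (v_0+\eta)t]$, for each $k$ bounding $\P[u(t,k+\ell) \ge M \text{ and } u(t,k) \le \varepsilon]$ using only the potential in $[k, k+\ell]$ plus a mixing error term of size $\psi(\mathrm{dist})$, and then summing the $O(t)$ such bounds; the summability $\sum_k \psi(k) < \infty$ together with the exponential-in-$\ell$ decay makes the double series finite after summing over $t \in \N$, so Borel--Cantelli applies. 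A secondary technical point is passing from integer times and locations to all $t$ large enough, which is handled by the (Hölder, hence locally uniform) regularity \eqref{eq:hoelder_cont} of $\xi$ and parabolic regularity of $u$.
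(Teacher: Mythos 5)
Your opening steps (reduction to $u_0=\mathds 1_{(-\infty,0]}$ with the $\log(C'/\delta')$ absorbed into the constant, and the use of Proposition~\ref{prop:lyapunov_tight} to place both fronts at speed $v_0$, hence inside a compact $V\subset(v_c,\infty)$) coincide with the paper, and you correctly sense that \eqref{eq:VEL} enters through a tilting argument. However, the heart of your proof has a genuine gap. First, the event you propose to estimate, $\P[u(t,k+\ell)\ge M \text{ and } u(t,k)\le\varepsilon]$, is not the event forced by $\overline m^\varepsilon(t)-\mover^{M,-}(t)>\ell$: a wide transition front only forces $u\le M$ at the \emph{left} point and $u\ge\varepsilon$ at the \emph{right} point, i.e.\ a decay by less than the factor $M/\varepsilon$ across a window of length $\ell$, not the upward jump you describe; so even a correct bound on your event would not cover the wide-front event. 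Second, the proposed decoupling fails: $u(t,k)$ and $u(t,k+\ell)$ are not (even approximately) functions of $\xi$ on $[k,k+\ell]$ — through the Feynman--Kac formula they depend, exponentially in $t$, on the potential along the whole stretch back to the support of $u_0$ — so there is no block locality to which a $\psi$-mixing error from \eqref{eq:MIX} can be attached. Third, the bookkeeping is inconsistent: with $\ell=C_{\varepsilon,M}$ a fixed constant, a per-block probability of order $e^{-c\ell}$ is a constant, and summing it over the $O(t)$ candidate locations and then over $t\in\N$ diverges; to close a Borel--Cantelli argument of this type you would need $\ell\gtrsim\ln t$, which yields only a logarithmically growing front, not a uniformly bounded one.

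What the paper actually proves, and what your sketch leaves out, is the quenched spatial perturbation estimate Lemma~\ref{le:space_perturb}(b): $\p$-a.s.\ for all $t$ large, uniformly in $v\in V$, $0\le h\le t\varepsilon(t)$ and $u_0\in\mathcal I_{\mathrm{PAM}}$, one has $u(t,vt+h)\le \Cr{const_space_perturbation}e^{-h/\Cr{const_space_perturbation}}u(t,vt)$, extended to larger $h$ by the chaining \eqref{eq:perturb_breakpoint}; the theorem then follows deterministically by contradiction with $C_{\varepsilon,M}=2\Cr{const_space_perturbation}\ln(2M\Cr{const_space_perturbation}/\varepsilon)$. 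Crucially, the exponential spatial decay of the ratio is \emph{not} obtained by excluding rare bad blocks: long stretches with $\xi\approx\es$ occur with constant density along the front and cannot be ruled out, and replacing $\xi$ by $\ei$ or $\es$ on a block does not by itself produce any decay estimate for the ratio. The decay rate is bounded below by (roughly) $\sqrt{2|\eta|}$ even across high stretches because the tilt $\eta_{vt}^\zeta(v)$ stays in a compact set $\triangle\subset(-\infty,0)$; establishing this uniformly is exactly where \eqref{eq:VEL}, the events $\mathcal H_x$, the concentration of $\eta_x^\zeta(v)$ around $\overline\eta(v)$ (Lemma~\ref{le:concEtaEmpLT_tight}), and the concentration of the local log-MGF sums via the mixing Hoeffding inequality (Lemma~\ref{le:spacePertLemma}) are needed. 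Your proposal assumes precisely this uniform control of $u(t,vt+h)/u(t,vt)$, which is the hard part of the proof, and the probabilistic mechanism you offer in its place would not deliver it.
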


Our second, and more important,  main result states that an analogous
statement is in general {\em not} true for the solution to  \eqref{eq:KPP}.

\begin{theorem}
  \label{thm:FKPPunBdd}
  There exist potentials $\xi$ fulfilling \eqref{eq:hoelder_cont},
  \eqref{eq:POT}, \eqref{eq:STAT} and \eqref{eq:MIX} such that the
  transition front of the solution to \eqref{eq:KPP} is not uniformly
  bounded in time.   More precisely, such $\xi$ can be chosen so that for
  any $\delta\in(0,1)$ and any $\varepsilon>0$ we find a sequence
  $(x_n, t_n)_{n \in \N}$ in $\R \times [0,\infty)$ as well as a function
  $\varphi \in \Theta(\ln n)$ such that
  \begin{enumerate}
    \item
    $x_n, t_n \to \infty$ as $n \to \infty$, and $(x_n)_{n\in\N} \in \Theta(n)$,

    \item for all $n \in \N$,
    \begin{equation} \label{eq:unbddFront}
      \delta = w(t_n, x_n) \le w(t_n, x_n + \varphi(n)) +
      \varepsilon.
    \end{equation}
  \end{enumerate}
\end{theorem}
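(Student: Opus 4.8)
The plan is to construct a potential $\xi$ that has, at a sparse (logarithmically lacunary) sequence of locations, long ``obstacle'' blocks on which $\xi \equiv \ei$, separated by regions on which $\xi$ behaves typically, and to show that the F-KPP front, which is essentially driven by the PAM front (the linearization) but capped at height $1$, develops an arbitrarily long nearly-flat plateau at height $\delta$ just in front of such an obstacle block. The heuristic is the following: ahead of a long stretch of minimal potential, the randomized F-KPP solution propagates only at the slow (deterministic, $\ei$-) speed $\sqrt{2\ei}$, which is strictly smaller than the typical speed $v_0$ of the front coming from behind. In the homogeneous medium behind the obstacle the solution is a genuine traveling wave saturated near $1$ and with exponentially decaying tail; but once its leading edge enters the obstacle block, the reaction term can no longer push it, so it spreads essentially like the solution of the linear heat equation with killing rate $\ei$, i.e.\ it flattens out: over the length of the obstacle the profile is squeezed between two levels that are within $\varepsilon$ of each other on a window whose length grows with the length of the obstacle.

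Concretely, I would proceed as follows. First, fix $\delta \in (0,1)$ and $\varepsilon > 0$. Choose a block length $\ell_n \to \infty$ slowly — $\ell_n = \Theta(\ln n)$ will be the target for $\varphi$ — and place, for each $n$, an interval $I_n = [a_n, a_n + \ell_n]$ with $a_n = \Theta(n)$ on which we force $\xi \equiv \ei$; off of $\bigcup_n I_n$ we let $\xi$ be an arbitrary potential satisfying \eqref{eq:hoelder_cont}, \eqref{eq:POT}, \eqref{eq:STAT}, \eqref{eq:MIX} (one must smooth the indicator-type transitions to keep Hölder continuity and must check that such a sparse, deterministic perturbation of a mixing field can be made stationary and $\psi$-mixing — this is a construction issue, see below). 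Second, I would run the solution up to a time $t_n$ chosen so that the front $\mover^{M,-}(t_n)$ (equivalently $\mtilde^{1-\varepsilon,-}(t_n)$) has just reached the left endpoint $a_n$ of $I_n$; by Proposition~\ref{prop:lyapunov_tight} and the a.s.\ linear speed $v_0$ of the front, such $t_n = \Theta(n)$ exist, and $t_n, x_n \to \infty$. Third, on the block $I_n$ I would compare $w$ from above with the solution $\bar w_n$ of the heat equation $\partial_t \bar w_n = \tfrac12 \partial_{xx}\bar w_n + \ei\, F'(0)\bar w_n = \tfrac12 \partial_{xx}\bar w_n + \ei \bar w_n$ started from $w(t_n, \cdot)\wedge 1$ with Dirichlet-type data imported at the endpoints, and from below by a sub-solution that is $0$ to the right of $I_n$ and matches a small multiple of $w$ at the left endpoint; a Feynman–Kac / branching representation (Proposition~\ref{prop:KPPBRM}) for the upper comparison makes the flattening quantitative: since $F(u) \le F'(0)u = u$, the solution inside $I_n$ is at most a pure heat–plus–exponential-growth evolution whose spatial profile over a box of width $o(\sqrt{\ell_n})$ varies by at most $\varepsilon$. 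This gives a window of length $\varphi(n) = \Theta(\ln n)$ starting at the point $x_n$ where $w(t_n, x_n) = \delta$ on which $w(t_n, \cdot)$ stays within $\varepsilon$ of $\delta$, which is exactly \eqref{eq:unbddFront}.

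For the quantitative flattening I would be more careful: the right statement is that, because ahead of the obstacle the F-KPP solution can only advance at speed $\sqrt{2\ei} < v_0$ while behind it is pushed at speed $v_0$, a macroscopic ``traffic jam'' of width $\Theta(\ell_n)$ forms where $w$ is neither close to $0$ nor to $1$; within this jam the gradient of $w$ is uniformly $O(1/\ell_n)$ (parabolic regularity for the linear-in-$w$-dominated equation on a long block with bounded coefficients), so over any sub-window of length $\varphi(n) = c\,\ell_n$ the oscillation of $w$ is at most $c\cdot O(1) \le \varepsilon$ for $c$ small, proving the claim with $\varphi \in \Theta(\ln n)$. The lower bound needed to locate $x_n$ with $w(t_n,x_n)=\delta$ exactly comes from the intermediate value theorem together with the fact that $w(t_n, \cdot) \to 1$ behind the obstacle and $\to 0$ well inside it.

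The main obstacle I anticipate is \emph{not} the comparison/flattening argument — that is essentially classical parabolic-PDE bookkeeping combined with the branching representation already set up in the paper — but the construction of the potential $\xi$ itself: one must exhibit a single \emph{stationary} and $\psi$-\emph{mixing} field whose realizations, with positive probability (indeed, so that the conclusion holds $\P$-a.s.\ or at least on an event of probability one), contain obstacle blocks of length $\Theta(\ln n)$ at positions $\Theta(n)$. This is exactly a lacunarity/large-deviations statement for the mixing field: long stretches on which $\xi$ stays within $o(1)$ of $\ei$ do occur, and by Borel–Cantelli-type arguments (using the summability $\sum_k \psi(k) < \infty$ to get near-independence of far-apart blocks) they occur infinitely often at the right spatial scale. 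One has to quote or adapt a second-moment / mixing Borel–Cantelli lemma to guarantee that $\P$-a.s.\ there are infinitely many such blocks, and then one has to verify carefully that \emph{conditioning} on (or selecting) these blocks does not destroy the regularity needed for the comparison (boundedness, Hölder continuity). Handling the randomness consistently — the blocks are now random locations $a_n(\omega)$, and $t_n = t_n(\omega)$, $x_n = x_n(\omega)$ must be measurable and satisfy the stated growth — is the delicate part, and where I would spend most of the write-up; I would likely isolate it as a separate lemma ``long near-minimal blocks occur at logarithmic density'' before turning the PDE crank.
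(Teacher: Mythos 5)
There is a genuine gap, and it sits exactly where you place your confidence: the ``flattening / traffic-jam'' step. A long block on which $\xi\equiv\ei$ does not make the F-KPP profile flat. Restricted to such a block the equation is the homogeneous KPP equation with rate $\ei$, which supports traveling waves of \emph{bounded} width: the transition from $w\approx 1$ to $w\approx 0$ inside the block has width $O(1)$, and ahead of it $w$ decays like $e^{-\sqrt{2\ei}\,x}$. A low block only slows the front down (to speed $\sqrt{2\ei}$ instead of $v_0$); it does not stretch the profile into a near-constant plateau of width $\Theta(\ln n)$, and your claimed gradient bound $O(1/\ell_n)$ is not what interior parabolic regularity gives (that yields $O(1)$ bounds in terms of $\|w\|_\infty$ and the coefficients; a $1/\ell_n$ bound presupposes the flatness you are trying to prove). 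Consequently the key inequality $w(t_n,x_n)\le w(t_n,x_n+\varphi(n))+\varepsilon$ does not follow from your construction. A telling symptom is that your argument never uses the hypothesis $\es/\ei>2$ of \eqref{eq:relation_es_ei}, which the paper flags as crucially needed; also, the potential construction that you identify as the main obstacle is in fact the easy part (a Poisson-process potential and Borel--Cantelli, Lemma~\ref{lem:pot}).

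The actual mechanism is different and genuinely probabilistic. The potential is arranged so that a long $\ei$-stretch is immediately \emph{followed on the right by a comparably long $\es$-stretch}, cf.\ \eqref{eq:xn_properties}. Via $w(t,x)=\ttP^\xi_x(N^\le(t,0)\neq\emptyset)$ (Proposition~\ref{prop:KPPBRM}), $w(t,\cdot)$ is a hitting probability as a function of the \emph{starting point} of a BBMRE: a particle started at the right end of the low stretch, adjacent to the high stretch, can first breed exponentially many descendants in the high-potential region and then let one of them make the long leftward excursion, whereas a particle started $\Theta(\ln n)$ further left, deep in the low stretch, cannot. The paper quantifies this by an explicit coupling (Proposition~\ref{pro:coupling}) showing $N_l(t)\subset N_r(t)$ with probability at least $1-\varepsilon$ for $l<r$ with $r-l=\Theta(\ln n)$; the large-deviation bookkeeping there is exactly where $\es/\ei>2$ enters. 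So the profile does not flatten in the low block --- it \emph{dips} there and rises again near the high block (whence Theorem~\ref{thm:non-monotone}); a PDE comparison confined to a single low block cannot see this effect.
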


This means that, at least along a subsequence of times, the interval of
transition in which the solution changes from being locally unstable
($w\approx 0$) to locally stable ($w\approx 1$), grows at least
logarithmically in time as $t \to \infty.$

While the previous result will be derived using probabilistic techniques,
we will enhance it employing analytic techniques to show that the
statement of Theorem~\ref{thm:FKPPunBdd} is true even for some  ``negative
$\varepsilon$''.
In particular, this entails the non-monotonicity of the solution in space.

\begin{theorem} \label{thm:non-monotone}
  There exist potentials $\xi$ fulfilling \eqref{eq:hoelder_cont},
  \eqref{eq:POT}, \eqref{eq:STAT} and \eqref{eq:MIX}, some $\varepsilon>0$
  small enough, and sequences $(t_n')_{n\in\N} $,
  $(l_n')_{n\in\N}$ and $(r_n')_{n\in\N}$ in $[0,\infty)$ such that
  $t_n',r_n',l_n'\in \Theta(n)$, $l_n'<r_n'$ for all $n$,  $r_n-l_n\in\Theta(\ln n)$
  and for all $n\in\N$,
  \begin{equation*}
    w(t_n',l_n') \leq w(t_n',r_n')-\varepsilon.
  \end{equation*}
\end{theorem}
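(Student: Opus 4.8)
The plan is to take the $\varepsilon$-wide plateau near the level $\delta$ produced by Theorem~\ref{thm:FKPPunBdd} for the potential $\xi$ constructed there, and to upgrade it into a genuine local increase of the profile. Fix $\delta\in(0,\tfrac12)$ and a small $\varepsilon_0>0$, and let $(x_n,t_n)$ and $\varphi\in\Theta(\ln n)$ be as in Theorem~\ref{thm:FKPPunBdd}, so that $w(t_n,x_n)=\delta$, $w(t_n,x_n+\varphi(n))\ge\delta-\varepsilon_0$, $x_n\in\Theta(n)$, and — since $w(t_n,x_n)=\delta\in(0,1)$ forces $x_n$ into the transition zone, which lies $o(t_n)$ away from the $v_0$-front (Proposition~\ref{prop:lyapunov_tight}) — also $t_n\in\Theta(n)$. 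Write $I_n:=[x_n,x_n+\varphi(n)]$.

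The first step is a dichotomy on $w(t_n,\cdot)|_{I_n}$, tuned so that the ``non-flat'' alternatives already give the statement over a window of the correct order $\Theta(\ln n)$. Let $K_n:=[x_n+\tfrac13\varphi(n),\,x_n+\tfrac23\varphi(n)]$ be the middle third; it has length $\Theta(\ln n)$ and is $\Theta(\ln n)$-separated from both endpoints of $I_n$. If $\inf_{K_n}w(t_n,\cdot)\le\delta-\varepsilon_0-\varepsilon_1$ for a fixed small $\varepsilon_1>0$, put $l_n':=$ a minimiser in $K_n$, $r_n':=x_n+\varphi(n)$, $t_n':=t_n$; then $w(t_n',l_n')\le w(t_n',r_n')-\varepsilon_1$, $l_n'<r_n'$, $r_n'-l_n'\in[\tfrac13\varphi(n),\varphi(n)]\subset\Theta(\ln n)$, and $t_n',l_n',r_n'\in\Theta(n)$. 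Symmetrically, if $\sup_{K_n}w(t_n,\cdot)\ge\delta+\varepsilon_1$, take $l_n':=x_n$ and $r_n':=$ a maximiser in $K_n$. In either case we are done with $\varepsilon:=\varepsilon_1$. What remains is the \emph{slab case}: $w(t_n,\cdot)\in(\delta-\varepsilon_0-\varepsilon_1,\,\delta+\varepsilon_1)$ on all of $K_n$, i.e.\ the profile is trapped in a thin band around $\delta$ along an interval of length $\to\infty$.

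In the slab case we exploit the F-KPP reaction: since $\xi F\ge\ei F$ and $F>0$ on $(0,1)$, a profile bounded below by $\beta:=\delta-\varepsilon_0-\varepsilon_1>0$ over a long interval ``ignites''. Comparing $w$ on $K_n$ with the sub-solution built from the ODE $\dot\phi=\ei F(\phi)$, $\phi(0)=\beta$, corrected by Dirichlet boundary layers of width $O(1)$ at $\partial K_n$ — admissible because $\ei F(s)\le\xi F(s)$ for every $s\in[0,1]$ — yields a bounded time $T_0$ and a constant $L_1$ with $w(t_n+T_0,\cdot)\ge1-\kappa$ on $K_n^\circ:=[x_n+\tfrac13\varphi(n)+L_1,\,x_n+\tfrac23\varphi(n)-L_1]$, an interval of length $\to\infty$, for a fixed small $\kappa$. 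To turn this ignited band into a forward increase one must still exhibit, at time $t_n+T_0$, a point separated from $K_n^\circ$ by $\Theta(\ln n)$ at which $w\le1-\kappa-\varepsilon$; equivalently, one must show that the band ignited in the middle of the plateau has not coalesced with the bulk. Here the construction underlying Theorem~\ref{thm:FKPPunBdd} re-enters: the rare feature of $\xi$ responsible for the plateau keeps the solution well away from $1$ on a spatial stretch of order $\ln n$ flanking the plateau for a time interval large compared with the bounded $T_0$, so that the relevant domains of dependence stay disjoint. Choosing $r_n'\in K_n^\circ$, $l_n'$ in that stretch, $t_n':=t_n+T_0$, and $\varepsilon\approx\kappa$ after adjusting constants, and checking the scales as above, finishes the proof.

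The main obstacle is precisely this last point: certifying that the region ignited by the reaction term is a genuine hump rather than an extension of the bulk, and that the accompanying window is of the correct order $\Theta(\ln n)$ and not merely $O(1)$. This amounts to a quantitative version of the slowdown mechanism already behind Theorem~\ref{thm:FKPPunBdd}, combined with standard parabolic tools — interior Schauder estimates on $w$, the strong maximum principle, and finite-propagation-type comparisons controlling the boundary layers. The dichotomy and the ignition comparison themselves are routine.
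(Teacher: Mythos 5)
There is a genuine gap, and it sits exactly where you flag it: the ``slab case'' is the generic case, and your argument for it is incomplete in a way that is not merely technical. After igniting the middle of the plateau to level $1-\kappa$ in bounded time $T_0$, you need a point $l_n'$ \emph{to the left} of the ignited band (since the statement requires $l_n'<r_n'$ with the smaller value at $l_n'$) where $w(t_n+T_0,\cdot)\le 1-\kappa-\varepsilon$. But the same comparison with $\dot\phi=\ei F(\phi)$ that ignites $K_n^\circ$ applies at every point where $w(t_n,\cdot)\ge\beta$, in particular at $x_n$ (where $w(t_n,x_n)=\delta>\beta$) and throughout the bulk further left, where $w$ is already large. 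So a point at distance $\Theta(\ln n)$ to the left of $K_n^\circ$ where $w$ \emph{remains} small after time $T_0$ is precisely a dip of the profile between the bulk and the plateau --- which is essentially the non-monotonicity you are trying to prove. Your appeal to ``the rare feature of $\xi$ responsible for the plateau'' keeping $w$ away from $1$ on a flanking stretch is therefore circular as stated, and also geometrically misplaced: the region where $w$ stays small lies ahead of the front, i.e.\ to the \emph{right}, which would force $l_n'>r_n'$.

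The paper's proof avoids this entirely by exploiting a piece of structure your proposal discards: the two comparable values produced by the coupling, $w(t_n,l_n)=\delta$ and $w(t_n,r_n)\ge\delta-\varepsilon$, sit over a stretch where $\xi\equiv\ei$ (around $l_n=x_n-4\delta_1\varphi(n)$) and a stretch where $\xi\equiv\es$ (around $r_n=x_n+2\delta_1\varphi(n)$), respectively, with $r_n-l_n\in\Theta(\ln n)$ already of the right order. One then argues by contradiction over a \emph{short} additional time $s'\sim\sqrt\varepsilon$: assuming near-monotonicity persists on $[t_n,t_n+1]$, the Feynman--Kac representation \eqref{eq:feynman_kac_KPP} together with the Markov property and a $K$-neighbourhood localization of the Brownian paths shows that the reaction grows $w$ at $r_n$ faster than at $l_n$ by a definite factor (the gap $(1-\ei/\es)\,\ei\,\underline{c}\,(\delta-\varepsilon)s'$ in \eqref{eq:intermediate}), forcing $w(t_n+s',l_n)\le w(t_n+s',r_n)-2\varepsilon$ --- contradicting the assumption. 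No ignition to level $1$ and no control of the bulk is needed. To repair your argument you would either have to import this differential-growth mechanism (at which point the dichotomy and ignition steps become superfluous), or independently establish the dip to the left of the plateau, which is the theorem itself.
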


Let us already mention here that at a first glance, it may seem
slightly difficult to reconcile the statement of Theorem \ref{thm:PAMBdd}
with the the statements of Theorems \ref{thm:FKPPunBdd} and
\ref{thm:non-monotone}. In particular, it might seem surprising given
that oftentimes the linearization of a non-linear PDE is considered to be
a good approximation for the original PDE, at least in the domain
    where the solutions remain small. We will address this issue in
more detail towards the end of Section \ref{sec:discussion}.

\begin{remark}
  \label{rem:concl_thm}
  It will become apparent from the respective proofs that
  Theorem~\ref{thm:PAMBdd}--\ref{thm:non-monotone} have immediate
  discrete space analogues for the respective stochastic partial
  difference equations. These are obtained as follows:
  \begin{enumerate}
    \item In equations \eqref{eq:PAM} and \eqref{eq:KPP}, $x\in \R$ is
    replaced by  $x\in \Z$, and the Laplace operator $\Delta$ is replaced
    by the discrete Laplace operator
    $\Delta_{\mathrm d}  f(x)=  \frac12 (f(x+1)+f(x-1) -2f(x))$.

    \item
    The potential $(\xi(x))_{x \in \R}$ is replaced by
    $(\xi(x))_{x \in \Z}$, the assumption \eqref{eq:POT} is replaced by
    $\ei\leq \xi(x)\leq \es$ for all $x\in\Z,$ and condition
    \eqref{eq:STAT} is replaced by
    $(\xi(x))_{x\in\Z}\overset{d}{=}(\xi(x+1))_{x\in\Z}$.

    \item
    In \eqref{eq:def_front_KPP} and
    Definition~\ref{def:transitionFronts}, $x \in \R$ is again
    substituted by $x \in \Z$.
  \end{enumerate}

  Then the statements of Theorems~\ref{thm:PAMBdd},
  \ref{thm:FKPPunBdd} and \ref{thm:non-monotone} still hold verbatim.
\end{remark}

\subsection{Discussion and previous results}
\label{sec:discussion}

As already explained in the Introduction, the homogeneous case of
constant $\xi$ has been well-understood by now (and, in fact, to a much
  finer extent than illustrated in the Introduction, see e.g.\
  \cite{Bo-16} and references therein for further details). Also the
heterogeneous case of random non-linearities we are dealing with has been
investigated before. Specifically, under fairly general assumptions, the
existence and characterization of the propagation speed (i.e., the linear
  order of the position of the front
  $\lim_{t \to \infty} \mtilde^\varepsilon(t)/t$) have been derived by
Freidlin and G\"artner, see e.g.~\cite{gaertner_freidlin79} as well as
\cite[Chapter VII]{Fr-85} using large deviation principles. Incidentally,
the Feynman-Kac formula (see also Section \ref{sec:PDE} below), which
characterizes the solution to the linearization \eqref{eq:PAM}, also
played an important role in the derivation.

In the setting described in the Introduction, second order corrections to
the position $\mtilde^\varepsilon(t)$ of the front are obtained in
\cite{DreSchmi19}, where it has been shown that the suitably centered and
rescaled front fulfills an invariance principle. Again, the proof takes
advantage of analyzing \eqref{eq:PAM} first. Let us note here that in
\cite{No-11a}, a corresponding invariance principle has been derived for
non-linearities that are either ignition type or bistable; note however,
that -- as will be explained below -- on a logarithmic in time scale
these fronts behave quite differently from the fronts to \eqref{eq:KPP}
in our context. For a different and due technical reasons restricted set
of initial conditions, Nolen \cite{No-11} has derived a central limit
theorem for the position of the front of the solution to \eqref{eq:KPP}
by analytic means. The initial condition $w_0(x,\xi)$ of \cite{No-11} is
required to depend on the randomness of the environment.

When it comes to the boundedness of transition fronts, Nolen and Ryzhik
\cite{NoRy-09} consider the setting of a stationary, ergodic and bounded
$\xi.$ The nonlinearity $F$ is assumed to be of ignition-type. I.e.,
there exists $\theta\in(0,1)$ such that
\begin{equation}
  \label{eq:ignition_type}
  F(w)=0\text{ for all }w\notin(\theta_0,1),\ F(w)>0
  \text{ for all }w\in(\theta_0,1),\text{ and }F'(1)<0.
\end{equation}
They find that the solution to \eqref{eq:KPP} has a uniformly bounded
transition front, see \cite[Proposition~2.3]{NoRy-09}. Our main result
Theorem \ref{thm:FKPPunBdd} entails that condition
\eqref{eq:ignition_type} is crucial here, since otherwise one cannot
expect uniformly bounded transition fronts.

Also note that in
\cite[Theorem \ref{LD-th: log-distance breakpoint median}]{DreSchmi19}
it has been shown that in the setting of the
current article,  the front of the solution to
\eqref{eq:KPP} lags behind the front of    the solution to \eqref{eq:PAM}
at most logarithmically in $t$. More precisely, for
$\varepsilon \in (0,1),$
\begin{equation*}
  \mover^\varepsilon(t) - \mtilde^\varepsilon(t) \in O(\ln t), \quad t \to \infty.
\end{equation*}
Therefore, it immediately arises the question whether this upper
bound is sharp. Theorem \ref{thm:FKPPunBdd} provides the following
partial affirmative answer: There exists an increasing sequence $(t_n)$
of times with $t_n \in (0,\infty)$ such that
$\lim_{n \to \infty} t_n = \infty$ and a sequence $(x_n)$ of reals such
that $\overline  m^\frac12(t_n) - x_n \ge c_0 \log t_n$ such that for all
$n \in \N:$
\begin{equation*}
  w(t_n, x_n) < \frac12 \quad \text{ and (by definition) } \quad
  u(t_n, \overline  m^\frac12(t_n)) = \frac 12.
\end{equation*}

As in the homogeneous context, there are profound and interesting links
to branching processes (in random environment). In \cite{CeDr-15}, in the
setting of discrete space, invariance principles have been derived for
the position of the front of the PAM as well as the
position of the maximum of BRWRE. Furthermore, it has been shown that the
distance between these two quantities is in $O(\ln t)$ as $t \to \infty.$
In this context, a subtle but important difference to the homogeneous
setting is that the solution to \eqref{eq:KPP} and the maximum of
branching Brownian motion in random environment (BBMRE; see Section
  \ref{sec:BBM} below for the precise definition) do exhibit a slightly
more involved interrelation. In particular, neither can we directly
transfer the sub-sequential tightness result of Kriechbaum \cite{Kr-20}
for the law of the maximum of branching random walk in random environment
(BRWRE) in the context of \cite{CeDr-15} to the setting of
\eqref{eq:KPP}, nor can we directly obtain a respective non-tightness
result for BBMRE from our unbounded transition fronts for the solution to
\eqref{eq:KPP}.
Furthermore,  it is trivial that the distribution function
$w^{\xi\equiv\text{const}}(t,\cdot)$  of the maximum of a BBMRE at time
$t,$ which is the solution to \eqref{eq:KPP} with $\xi\equiv\text{const}$,
is non-increasing in space. This again is in stark contrast to
Theorem~\ref{thm:non-monotone}, which states that this is not the case
for the solution to \eqref{eq:KPP} anymore if $\xi$   exhibits ‘‘enough''
irregularity.

As already alluded to above, Theorem~\ref{thm:PAMBdd}
as well as Theorems \ref{thm:FKPPunBdd} and \ref{thm:non-monotone}  might seem slightly surprising in the light of
each other, since they imply that the front of \eqref{eq:KPP} behaves
qualitatively quite differently from that of \eqref{eq:PAM}. In this
context, note that Theorem~\ref{thm:PAMBdd} requires condition
\eqref{eq:VEL} to be fulfilled, while the potential $\xi$ satisfying the
properties stated in Theorems~\ref{thm:FKPPunBdd} and \ref{thm:non-monotone} is constructed in
\eqref{eq:def_xi} from the sole assumption $\es/\ei >2$ of
\eqref{eq:relation_es_ei}. In Section \ref{sec:nonTrivial} below, cf.\ Proposition \ref{prop:nonTriv},
we show that these conditions can be fulfilled simultaneously and hence this regime of qualitatively different behaviors for the solutions of
\eqref{eq:KPP} and \eqref{eq:PAM} is non-trivial.

 While from a
PDE point of view we lack the experience as well as a good enough control
of the fronts that would enable us to explain this phenomenon, it becomes
more tractable from a probabilistic point of view. Indeed, we will see
below, cf.~Proposition~\ref{prop:FK}, that the solution to \eqref{eq:PAM} can
be represented in terms of expectations of a Brownian motion in random
potential, i.e.\ as
\[
    u(t,x) = E_x \Big[\exp \Big\{ \int_0^t  \xi(B_s) \, {\rm d}s \Big\}
      \, \mathds{1}_{(-\infty, 0]}(B_t) \Big].
\]
Here, $x$ which are of
linear order in time $t$,  such as $\overline m^\varepsilon(t)$, turn
out to be probabilistically ‘‘costly'' in the sense that for large
$C > 0$, Brownian motion in the expectation corresponding to $u(t,x-C),$
i.e.\ starting in $x-C$ and being to the left of the origin at time $t,$
has to make less of an effort in terms of large deviations than Brownian
motion starting in $x$ and being to the left of the origin at time $t.$
Nevertheless, the former can still collect at least as high potential
values as the latter, since, typically between $x-C$ and $0$ there are
  enough locations where $\xi $ is large.
As a consequence, $u(t,x) \ll u(t,x-C)$ for $C$
large,
which at least on a heuristic level explains how the uniform boundedness
of the transition fronts to \eqref{eq:PAM} stated in Theorem~\ref{thm:PAMBdd} comes about.

On the other hand, regarding the solution to \eqref{eq:KPP} one
has a representation in terms of a maximum of branching Brownian motion
in random environment (to be introduced in Section~\ref{sec:BBM}), see
 Proposition~\ref{prop:KPPBRM}. The coupling we
will construct below in Section~\ref{ss:coupling} demonstrates that when
it comes to the displacement of this maximum from the starting site of
the process, a crucial role is played by the values of the potential in
an environment of the starting point. Exploiting this fact in a subtle
manner, we arrive at the diverging sequence of times given in Theorem
\ref{thm:non-monotone} at which the front of \eqref{eq:KPP} is getting
wider and wider. What is more, this result can be strengthened to even
deduce the non-monotonicity stated in Theorem \ref{thm:non-monotone}.

\bigskip

\noindent \emph{Open Questions:}
\begin{enumerate}
\item[(i)] We expect that the front of the
solution to \eqref{eq:KPP} shifts from exhibiting unbounded transition
fronts (essentially when $\es - \ei$ large, and maybe further conditions, cf.\ Theorem~\ref{thm:FKPPunBdd})
to exhibiting bounded transition fronts (essentially if $\es - \ei$
  small, and maybe further conditions, cf.\ \eqref{eq:travelling_wave}). While it is not clear if
‘‘small'' means ‘‘vanishes'' in this context, let us point out here
that---while periodic media are oftentimes taken to be a simple instance
for heterogeneous or random media, cf.\ also \cite{Fr-85,
  HaNoRoRy-12,lubetzky2020maximum}---it is clear from our proofs that the
phenomenon of long stretches of areas of high and low potential, which is
crucial in our proof, is not observed for periodic media.

\item[(ii)] Is there a logarithmic upper bound corresponding to the result of Theorem \ref{thm:FKPPunBdd} as well, in the sense that     $\mtilde^\varepsilon(t) - \mtilde^{1-\varepsilon,-}(t) \le C \log t$ for all $t$ large enough?

\end{enumerate}

\bigskip

\noindent \emph{Organization of the article:} In Section~\ref{sec:tools}, we recall
the well-known Feynman-Kac formula for the solutions to
\eqref{eq:KPP} and \eqref{eq:PAM}, and introduce branching Brownian
motion in random environment, which plays the role of a key tool in this
article. Section~\ref{sec:PAM} contains the proof of
Theorem~\ref{thm:PAMBdd}, together with some preparatory results concerning
the perturbation of the solution to \eqref{eq:PAM} in space and
concentration results for the logarithmic moment generating functions.
Finally, Section~\ref{sec:proof_unbounded}
deals with the proofs of the main results about the F-KPP equation,
Theorems~\ref{thm:FKPPunBdd} and~\ref{thm:non-monotone}.

This article is closely related to \cite{DreSchmi19}. While it takes
advantage of some results derived in \cite{DreSchmi19}, it also provides
suitable results such as Lemma \ref{le:space_perturb} in a natural
context, and which are also taken advantage of in \cite{DreSchmi19}.

\section{Preliminaries}
\label{sec:tools}

In this section we recall two important well-known results which are used to
prove our main theorems, and introduce the related notation.

\subsection{Feynman-Kac representation}
\label{sec:PDE}

An important tool for the investigation of the solutions to \eqref{eq:KPP} and
\eqref{eq:PAM} are their Feynman-Kac representations. Here and in what
follows, for $x\in\R$ arbitrary, we denote by $E_x$ the expectation
operator with respect to the probability measure $P_x$ under which the
process $(B_t)_{t\geq0}$ is a standard Brownian motion starting in $x$.

\begin{proposition}[Feynman-Kac formula, {\cite[(1.32)]{bramson1983convergence}}]
  Under the  assumptions of Section~\ref{sec:model}, the (unique)
  \label{prop:FK}%
  non-negative solution $u$ to \eqref{eq:PAM} is given by
  \begin{equation}
    \label{eq:feynman_kac_PAM}
    u(t,x) = E_x \Big[\exp \Big\{ \int_0^t  \xi(B_s) \, {\rm d}s \Big\}
      \, u_0(B_t) \Big],
  \end{equation}
  while the (unique) non-negative solution $w$ to \eqref{eq:KPP} fulfills
  \begin{equation}
    \label{eq:feynman_kac_KPP}
    w(t,x) = E_x \Big[\exp \Big\{ \int_0^t  \xi(B_s)F(w(t-s,B_s))/w(t-s,B_s)\diff s \Big\}
      \, w_0(B_t) \Big].
  \end{equation}
\end{proposition}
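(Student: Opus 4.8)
The plan is to verify both identities by an It\^o-formula argument, thereby establishing simultaneously that the right-hand sides are the unique non-negative solutions. I would treat \eqref{eq:feynman_kac_PAM} first and then reduce \eqref{eq:feynman_kac_KPP} to a linear problem.

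For \eqref{eq:PAM}: denote by $v(t,x)$ the right-hand side of \eqref{eq:feynman_kac_PAM}. Since $\xi\le\es$ by \eqref{eq:POT} and $u_0$ is bounded, $v(t,x)\le e^{\es t}E_x[u_0(B_t)]$ is finite and locally bounded; standard parabolic regularity theory --- applicable because $\xi$ is H\"older continuous by \eqref{eq:hoelder_cont} --- shows that $v\in C^{1,2}((0,\infty)\times\R)$, that it solves \eqref{eq:PAM}, and that $v(t,\cdot)\to u_0$ as $t\downarrow0$ in the appropriate (e.g.\ $L^1_{\mathrm{loc}}$, or pointwise at continuity points) sense; see \cite[(1.32)]{bramson1983convergence} and the references therein. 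For uniqueness within the class of non-negative solutions --- which by the maximum principle grow at most like $e^{\es t}$ times a bounded function --- let $u$ be any such solution and fix $t>0$. Applying It\^o's formula to
\[
  M_s := \exp\Big\{\int_0^s \xi(B_r)\,\d r\Big\}\, u(t-s,B_s),\qquad 0\le s\le t,
\]
and using $-\partial_t u+\tfrac12\partial_{xx}u+\xi u=0$, one finds that the finite-variation part of $M$ vanishes, so $(M_s)_{0\le s\le t}$ is a local martingale; being bounded on $[0,t]$ it is a genuine martingale, and therefore $u(t,x)=M_0=E_x[M_t]=v(t,x)$.

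For \eqref{eq:KPP}: first note that the solution $w$ with initial datum $\mathds{1}_{(-\infty,0]}$ takes values in $[0,1]$ (by the maximum principle, or by the branching representation of Proposition~\ref{prop:KPPBRM}). Since $F(0)=0$ and $F'(0)=1$ by \eqref{eq:standard_cond}, the map $u\mapsto F(u)/u$ extends continuously to $[0,1]$ with value $1$ at $0$ and is hence bounded, say by $K<\infty$; consequently the effective potential $(s,y)\mapsto \xi(y)F(w(t-s,y))/w(t-s,y)$ is bounded by $\es K$. Fixing $t>0$ and applying It\^o's formula to
\[
  N_s := \exp\Big\{\int_0^s \xi(B_r)\frac{F(w(t-r,B_r))}{w(t-r,B_r)}\,\d r\Big\}\, w(t-s,B_s),\qquad 0\le s\le t,
\]
the finite-variation part equals $\exp\{\cdot\}\big(-\partial_t w+\tfrac12\partial_{xx}w+\xi F(w)\big)$ evaluated along the path, which vanishes by \eqref{eq:KPP}. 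Hence $N$ is again a local martingale, bounded on $[0,t]$ by $e^{\es K t}$, hence a martingale, and evaluating $N_0=E_x[N_t]$ yields \eqref{eq:feynman_kac_KPP}.

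The main technical point is the regularity required to apply It\^o's formula, i.e.\ that $u$ and $w$ are of class $C^{1,2}$: for \eqref{eq:PAM} this is classical parabolic theory built on the H\"older continuity \eqref{eq:hoelder_cont}, while for \eqref{eq:KPP} one bootstraps --- the a priori H\"older continuity of $w$ together with the smoothness of $F$ makes the effective potential H\"older continuous in $(t,x)$, so Schauder estimates upgrade $w$ to $C^{1,2}$. A secondary point is the passage from local to true martingale; here it suffices to observe that, thanks to \eqref{eq:POT} and the boundedness of $u_0$ (respectively the bound $0\le w\le1$), the processes $M$ and $N$ are uniformly bounded on the finite interval $[0,t]$.
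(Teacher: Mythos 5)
Your verification argument is essentially sound, but it is worth noting that the paper does not prove this proposition at all: it cites Bramson's (1.32), and the remark immediately following the statement explains that the authors in fact \emph{take} \eqref{eq:feynman_kac_PAM} and \eqref{eq:feynman_kac_KPP} as the definitions of the solutions, referring to Karatzas--Shreve (Corollary~4.4.5) for \eqref{eq:PAM} and to Freidlin (Theorem~7.4.1 and pp.~354--355) for existence/uniqueness of the function satisfying the implicit equation \eqref{eq:feynman_kac_KPP}. Your It\^o/martingale computation is precisely the standard argument underlying those citations (in particular the Karatzas--Shreve route for the linear case), so in substance you are reconstructing the outsourced proof rather than taking a different one; what your write-up buys is a self-contained verification, what the paper's choice buys is that all regularity and well-posedness issues are delegated to the literature.

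Two points in your sketch deserve more care. First, the uniqueness claim for \eqref{eq:PAM} within the class of \emph{all} non-negative solutions is subtler than an appeal to ``the maximum principle'': to know that an arbitrary non-negative solution is dominated by $e^{\es t}$ times a bounded function (which is what makes $M$ a bounded, hence true, martingale) you need either an a priori growth hypothesis together with a Phragm\'en--Lindel\"of-type comparison, or a Widder-type theorem for non-negative solutions adapted to the bounded potential $\xi$; as stated, your argument only gives uniqueness within a growth class, which is also all the cited references provide without extra work. Second, since $u_0\in\mathcal{I}_{\mathrm{PAM}}$ is merely measurable (and $w_0=\mathds{1}_{(-\infty,0]}$ is discontinuous), $u$ and $w$ are $C^{1,2}$ only for strictly positive times, so It\^o's formula can be applied only on $[0,t-\epsilon]$; one must then let $\epsilon\downarrow 0$, using dominated convergence, $P_x(B_t=0)=0$, and convergence of $u(\epsilon,\cdot)$, $w(\epsilon,\cdot)$ to the initial data at continuity points (for general $u_0\in\mathcal{I}_{\mathrm{PAM}}$ this last step needs an explicit argument, e.g.\ via the heat-kernel representation). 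With these two repairs your proposal is a correct, self-contained proof of the statement the paper only cites.
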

\begin{remark}
  In fact, we will take \eqref{eq:feynman_kac_PAM} and
  \eqref{eq:feynman_kac_KPP}   as the definition of the solution to
  \eqref{eq:PAM} and \eqref{eq:KPP}, respectively. Indeed, while the
  function \eqref{eq:feynman_kac_PAM} is given explicitly, there exists a
  unique function satisfying \eqref{eq:feynman_kac_KPP} (see e.g.\
    \cite[Theorem~7.4.1]{Fr-85}). If the solution to \eqref{eq:PAM} and
  \eqref{eq:KPP} exist, it can be shown (see e.g.\
    \cite[Corollary~4.4.5]{karatzas_shreve} for \eqref{eq:PAM} and
    \cite[(1.4), p.\ 354, and (a), p.\ 355]{Fr-85} for \eqref{eq:KPP})
  that they satisfy \eqref{eq:feynman_kac_PAM} and
  \eqref{eq:feynman_kac_KPP}, respectively.
\end{remark}

\subsection{Branching Brownian motion in random environment }
\label{sec:BBM}
A key tool for proving Theorems \ref{thm:FKPPunBdd}
and~\ref{thm:non-monotone} is the correspondence between the solution to
\eqref{eq:KPP} and branching Brownian motion in random environment, cf.\
Proposition~\ref{prop:KPPBRM} below. Branching Brownian motion in random
environment $\xi$ (BBMRE) started at $x\in \mathbb R $ is defined as
follows:  Conditionally on the realization of $\xi$,  we place one
particle at $x$ at time $0$. As time evolves, all particles move
independently according to standard Brownian motion. In addition, and
independently of everything else, while at $y$, a particle splits at rate
$\xi(y)$.
Once a particle splits, this particle is removed and, randomly and
independently from everything else with probability $p_k$, replaced by $k$
new particles that are put at the position $y$ of the removed particle.
These $k$ new particles evolve independently according to the same
diffusion-branching mechanism as the remaining particles. This defines
\emph{branching Brownian motion in the branching environment} $\xi$ with
offspring distribution~$(p_k)$. For every $x\in\R$ and  $\xi$,
$\Eprob_x^\xi$ denotes the corresponding expectation of the probability
measure $\ttP_{x}^\xi$ of a BBMRE, starting in $x$.


If the respective BBMRE is evident from the context, we
use $N(t)$ to denote the set of particles alive at time $t$ in this
BBMRE. For any particle $Y \in N(t)$, we denote by $(Y_s)_{s \in [0,t]}$
the trajectory of itself and its ancestors up to time $t$. We will also
call $(Y_s)_{s \in [0,t]}$ the \emph{genealogy} of $Y$. For $t\ge 0$ and
$x\in \R$, we define
\begin{equation}
  \label{eq:LR}
  \begin{split}
    N^\ge(t,x) := \{ Y \in N(t) \, : Y_t \ge x\}
    \quad \text{ and } \quad
    N^\le(t,x) := \{ Y \in N(t) \, : Y_t \le x\}
  \end{split}
\end{equation}
as the number of particles in the process at time $t$ which are located
to the right or to the left of $x$. Furthermore, in a slight abuse of
notation, we also use $N$ to denote an entire BBMRE process.

To complete the list of notation, for a stochastic process
$X=(X_t)_{t\geq0}$ and some Borel set $B\subset\R,$ we denote
$H_B(X):=\inf\{ t\geq0: X_t\in B \}$ and set $H_x(X):=H_{\{x\}}(X)$,
$x\in\R$. For a particle $Y\in N(t)$ of a BBMRE, we set
$H_B(Y)=\inf\{ s \in [0,t]: Y_s\in B \}$, where $(Y_s)_{s\geq0}$ is the
genealogy of $Y$ and as usual $\inf \emptyset = \infty.$

\section{Boundedness of the front for PAM}
\label{sec:PAM}

In this section we show our first main result, the boundedness of the
front for the equation \eqref{eq:PAM}, that is Theorem~\ref{thm:PAMBdd}.

\subsection{A perturbation estimate}

The main tool in the
proof is a space perturbation result for the solution to
\eqref{eq:PAM} in a regime of sub-linear perturbation, see
Lemma~\ref{le:space_perturb} bellow.

To state this lemma we need to introduce some notation.
Let $\zeta(x):=\xi(x)-\es\leq 0$ with $\es$ defined in \eqref{eq:POT}. For
$\eta<0,$ define the logarithmic moment generating function as well as
the related quantities
\begin{equation}
  \label{eq:def_L_quant}
  \begin{split}
    L_{x}^{\zeta}(\eta)&:=\ln E_{x}\Big[ \exp\Big\{ \int_0^{H_{\lceil x \rceil
            -1}}( \zeta(B_s)+\eta )\diff s \Big\} \Big],\quad x\in\R,\\
    \overline{L}_{x}^\zeta(\eta)&:= \frac{1}{x} \ln E_x\Big[ \exp\Big\{
      \int_0^{H_0}(\zeta(B_s)+\eta)\diff s \Big\} \Big],\quad x>0,\\
    L(\eta) &:= \E\big[ L_1^\zeta(\eta) \big],\\
    S_x^{\zeta,v}(\eta) &:= x\Big(\frac{\eta}{v}-\overline{L}_{x}^\zeta(\eta)
    \Big),\quad x>0,\ v>0.
  \end{split}
\end{equation}
Some elementary properties of these functions are recalled in the
Appendix. Here we note that, under the assumptions \eqref{eq:POT},
\eqref{eq:STAT}, and \eqref{eq:MIX} on the potential $\xi$, we have
$\E[ \overline{L}_x^\zeta(\eta) ]=L(\eta)$ for all $\eta<0$ and all $x>0$.
Further observe that using the strong Markov property one easily shows
that for any $x\ge 1$,
\begin{equation}
  \label{eqn:sumnotationa}
  x \overline L_x^\zeta (\eta)
  =  L_x^\zeta(\eta) + \sum_{i=1}^{\lceil x\rceil-1}L_i^\zeta(\eta)
   =: \sum_{i=1}^x L_i^\zeta(\eta),
\end{equation}
where the last equality should be seen as the definition of the
sum on the right-hand side. For convenience, for $1\le x\le y$,
we also define
\begin{equation}
  \label{eqn:sumnotationb}
  \sum_{i=x+1}^y L_i^\zeta(\eta) := \sum_{i=1}^y
  L_i^\zeta(\eta) - \sum_{i=1}^x L_i^\zeta(\eta),
  \qquad \text{and} \qquad
  \sum_{i=y +1}^x L_i^\zeta(\eta)  :=-\sum_{i=x+1}^y L_i^\zeta(\eta).
\end{equation}
Furthermore, it essentially follows from
Lemma~\ref{le:quenched_lmgf}(b) that
\begin{equation}
  \label{eqn:equic}
  \parbox{11cm}{
    $\big(\eta \mapsto L_x^\zeta(\eta):x\in\R, \zeta\in \Omega
      \text{ with } \ei -\es \le \zeta \le 0 \big)$
    is a family of equicontinuous functions on every compact interval
    $I\subset(-\infty,0)$.}
\end{equation}

We further define tilted probability measures under which the process
$(B_t)_{t\geq0}$ moves {\em on average} with speed $v$ up to time $t$,
cf.\ \eqref{eq:HitExpect} below. We start with introducing the family of
tilted probability measures
\begin{equation}
  \label{eq:def_Px}
  P_x^{\zeta,\eta}(\cdot):=\exp\big\{-x\overline{L}_x^\zeta(\eta)\}\cdot
  E_x\Big[ \exp \Big\{ \int_0^{H_0}\big( \zeta(B_s)+\eta \big)\diff s \Big \};\
    \cdot\ \Big], \quad x > 0,
\end{equation}
on the space of continuous functions mapping the (initial) argument $0$ to $x$ and   vanishing only at their (variable) terminal argument.
We denote the corresponding expectation operator by $E_x^{\zeta,\eta}$.
Then we fix a compact interval $V \subset (v_c,\infty)$ (see
  Lemma~\ref{le:annealed_lgmf} (d) for the notation) containing
$v_0$ in its interior.
It is known, see Lemma~\ref{le:concEtaEmpLT_tight},
that there exists a compact
interval $\triangle\subset(-\infty,0)$, such that
$\p$-a.s., for all $t$ large enough and all $v\in V$, there exists a unique
$\eta_{vt}^{\zeta}(v)\in\triangle$ fulfilling
\begin{align}
  \label{eq:HitExpect}
  E_{vt}^{\zeta,\eta^\zeta_{vt}(v)}[H_0] & = vt \big( \overline{L}_{vt}^{\zeta}
  \big)'(\eta_{vt}^{\zeta}) = t.
\end{align}
As consequence, there exists a $\p$-a.s.\ finite random position
$\Cl[index_prob]{constHn}=\Cr{constHn}(\xi,V,\triangle)$ such that the event
\begin{equation}
  \label{eq:H_n}
  \mathcal{H}_x:=\mathcal{H}_x(V,\triangle) := \big\{ \eta_x^\zeta(v) \in
  \triangle \text{ for all }v\in V \big\} \quad\text{ occurs for all }x\geq
  \Cr{constHn}.
\end{equation}
Further, by Lemma~\ref{le:annealed_lgmf} (d) there exists
$\overline{\eta}(v)<0$, $v\in V$, such that
\begin{equation*}
  L'(\overline{\eta}(v)) = \frac1v.
\end{equation*}
Finally,
we have that
\begin{equation} \label{eq:triangleLip}
  \overline{\eta}(V)\subset\triangle \quad \text{ and }
  \quad \overline{\eta} \text{ is uniformly Lipschitz continuous on } V,
\end{equation}
cf.\ \cite[\eqref{LD-eq:defVTriangle}
  and below \eqref{LD-eq: conv speed eta_n(v')-eta(v)}]{DreSchmi19}.

We can now state our main perturbation lemma.
\begin{lemma}
  \label{le:space_perturb}
  Let $\varepsilon(t)$ be a positive function such that $\varepsilon(t)\to0$ and
  $\frac{t\varepsilon(t)}{\ln t}\to\infty$  as $t\to\infty$. Then for all $\delta>0$
  there exists $C=C(\delta)>0$ such that $\p$-a.s., for all
  $u_0\in\mathcal{I}_{\mathrm{PAM}}$ we have
  \begin{enumerate}
    \item
    \begin{equation}
      \label{eq:space_pert_genau}
      \hspace{-10mm}\limsup_{t\to\infty}\ \sup\left\{  \left|
        \frac{1}{h}\ln\left( \frac{u(t,vt+h)}{u(t,vt)} \right) - L\big(
          \overline{\eta}(v) \big)  \right|: (v,h)\in\mathcal{E}_t\right\}
      \leq \delta,
    \end{equation}
    where
    $\mathcal{E}_t:=\left\{ (v,h):\ v,
      v+\frac{h}{t}\in V,\ C(\delta )\ln t\leq |h|\leq t\varepsilon(t)  \right\}$.

    \item
    Let $\varepsilon(t)$ be a positive function such that $\varepsilon(t)\to0$.
    Then there exists a constant $\Cl{const_space_perturbation}<\infty$
    and a $\p$-a.s.~finite random variable
    $\Cl[index_prob_cont_time]{T_index_spacepert}$ such that  for all
    $t\geq\Cr{T_index_spacepert}$, uniformly in
    $0\leq h\leq t\varepsilon(t)$, $v\in V$, $v+\frac{h}{t}\in V$ and
    $u_0\in\mathcal{I}_{\mathrm{PAM}}$ we have
    \begin{equation}
      \label{eq:space_pert_ungenau}
      \Cr{const_space_perturbation}^{-1}e^{-\Cr{const_space_perturbation}h}\,
      u(t,vt) \leq u(t,vt+h) \leq
      \Cr{const_space_perturbation}e^{-h/\Cr{const_space_perturbation}}\, u(t,vt).
    \end{equation}
  \end{enumerate}
\end{lemma}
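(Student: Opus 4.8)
The plan is to prove the two parts of Lemma~\ref{le:space_perturb} by first establishing a sharp asymptotic for the quenched logarithmic moment generating function (part (a)) and then deriving the cruder, but uniform, two-sided bound (part (b)) as a relatively soft consequence. The starting point for both is the Feynman--Kac representation \eqref{eq:feynman_kac_PAM}. Writing $\xi = \es + \zeta$ with $\zeta \le 0$, we have
\begin{equation*}
  u(t,x) = e^{\es t}\, E_x\Big[ \exp\Big\{ \int_0^t \zeta(B_s)\,\d s \Big\}\, u_0(B_t) \Big].
\end{equation*}
To compare $u(t,vt+h)$ with $u(t,vt)$, the idea is to condition the Brownian path started at $vt+h$ on its first hitting time of the point $vt$ (for $h>0$; symmetrically, hitting $vt+h$ from $vt$ when $h<0$), and to exploit the fact that when $|h| \le t\varepsilon(t)$ with $\varepsilon(t)\to0$, this detour costs only a negligible amount of time $o(t)$, so that the ``bulk'' contribution on $[H_{vt},t]$ is essentially the same as in $u(t,vt)$. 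What remains is precisely the extra stretch over the spatial interval between $vt$ and $vt+h$, whose cost is governed by the increments $\sum_{i} L_i^\zeta(\eta)$ introduced in \eqref{eqn:sumnotationa}--\eqref{eqn:sumnotationb}, tilted at the appropriate rate $\eta$ dictated by the time budget.

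Concretely, for part (a): the optimal tilt for the piece of path that must traverse distance $|h|$ in the remaining time is, to leading order, the same $\eta_{vt}^\zeta(v)$ that already governs $u(t,vt)$ via \eqref{eq:HitExpect}, because perturbing the endpoint by $h = o(t)$ perturbs the constraint negligibly; on the event $\mathcal{H}_{vt}$ of \eqref{eq:H_n} this tilt lies in the compact set $\triangle$. Under the tilted measure $P_{\cdot}^{\zeta,\eta}$ of \eqref{eq:def_Px}, the quantity $\frac1h\ln(u(t,vt+h)/u(t,vt))$ is then, up to $o(1)$ errors, equal to $-\frac1h\sum_{i=vt+1}^{vt+h} L_i^\zeta(\eta_{vt}^\zeta(v))$ (with the sign/index conventions of \eqref{eqn:sumnotationb}). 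Here the lower bound $|h| \ge C(\delta)\ln t$ is exactly what is needed so that a law-of-large-numbers / ergodic average over the $\Theta(|h|)$ many unit blocks concentrates: by \eqref{eq:MIX} the sequence $(L_i^\zeta(\eta))_i$ is stationary and $\psi$-mixing with summable coefficients, so a block-variance estimate plus a union bound over the (polynomially many, after discretizing $V$ and the relevant $\eta$-range) choices, together with the equicontinuity \eqref{eqn:equic}, yields that this average is within $\delta$ of its mean $L(\eta_{vt}^\zeta(v))$ for all $t$ large, $\p$-a.s. Finally, since $\eta_{vt}^\zeta(v) \to \overline\eta(v)$ uniformly on $V$ (a consequence of \eqref{eq:HitExpect} versus $L'(\overline\eta(v))=1/v$, again using \eqref{eqn:equic} and the Lipschitz bound \eqref{eq:triangleLip}), continuity of $L$ lets us replace $L(\eta_{vt}^\zeta(v))$ by $L(\overline\eta(v))$, giving \eqref{eq:space_pert_genau}. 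The independence of all constants from $u_0 \in \mathcal{I}_{\mathrm{PAM}}(\delta',C')$ is immediate because \eqref{eq:PAM_initial} sandwiches $u_0$ between two fixed multiples of indicator-type functions, and the ratio of the corresponding Feynman--Kac expectations differs by at most a fixed multiplicative constant $C'/\delta'$, contributing $O(1/|h|) = o(1)$ to the normalized logarithm.

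For part (b): here $\varepsilon(t)\to 0$ is the only hypothesis and we need only a two-sided bound with a uniform constant. This follows from the same hitting-time decomposition but now estimating crudely. For the upper bound, a path from $vt+h$ that ends left of the origin must cross $vt$; using the strong Markov property at $H_{vt}$ and $\zeta \le 0$ together with $u_0 \le C'\mathds 1_{(-\infty,0]}$, one bounds $u(t,vt+h)$ by $u(t,vt)$ times the expectation of $\exp\{\int_0^{H_{vt}}\zeta(B_s)\,\d s\}$ for the detour; since $\essinf\xi = \ei>0$ forces $\zeta \le \ei - \es < 0$ strictly on a positive-density set of locations by \eqref{eq:POT}, and since such a detour over distance $h$ visits $\Theta(h)$ such unit blocks, this expectation is at most $\Cr{const_space_perturbation}\,e^{-h/\Cr{const_space_perturbation}}$ for a deterministic $\Cr{const_space_perturbation}$ and $t\ge\Cr{T_index_spacepert}$; one should be slightly careful to phrase this via the concentration of $\sum_i L_i^\zeta(\eta)$ at a single fixed $\eta$ (or directly via a Chernoff/ergodic estimate on $\int \zeta(B_s)\,\d s$), which is why a $\p$-a.s.\ finite random time $\Cr{T_index_spacepert}$ rather than a deterministic one appears. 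The lower bound $u(t,vt+h)\ge \Cr{const_space_perturbation}^{-1}e^{-\Cr{const_space_perturbation}h}u(t,vt)$ is obtained by the reverse construction: force the path from $vt$ to first travel to $vt+h$ (paying at most $e^{-\Cr{const_space_perturbation}h}$ since $|\zeta|\le \es-\ei$ and the detour takes time $O(h)$ with Gaussian probability bounded below by $e^{-O(h)}$), then follow the law that was near-optimal for $u(t,vt+h)$.

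I expect the main obstacle to be the uniform control in part (a): one must simultaneously (i) handle the fact that $\eta_{vt}^\zeta(v)$ is itself random and $v$-dependent, (ii) obtain the concentration of the block sums $\sum L_i^\zeta(\eta)$ with an error that beats $\delta|h|$ uniformly over all admissible $(v,h)$ down to the threshold $|h|\asymp \ln t$, which is exactly the regime where the fluctuations are of the same logarithmic order as the quantities being estimated, and (iii) transfer the ``cost over $[H_{vt},t]$'' from a path of duration $t - H_{vt}$ to one of duration $t$ while keeping the error $o(|h|)$ — here the hypothesis $t\varepsilon(t)/\ln t \to \infty$ is used to guarantee that the $o(t)$ time-shift error, once normalized by $|h|\ge C(\delta)\ln t$, is still negligible. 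Marshalling the $\psi$-mixing estimates of \eqref{eq:MIX} into a maximal inequality over a net in $(v,\eta)$, combined with the equicontinuity statement \eqref{eqn:equic} to pass from the net to all parameters, is the technical heart of the argument; the rest is bookkeeping with the strong Markov property.
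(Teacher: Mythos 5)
Your overall architecture for part (a) --- reduce to $u_0=\mathds{1}_{(-\infty,0]}$ via the sandwich bound, express the ratio through the block quantities $L_i^\zeta$ at the tilt $\eta_{vt}^\zeta(v)$ from \eqref{eq:HitExpect}, concentrate the block sums using \eqref{eq:MIX} over a net in the parameters together with the equicontinuity \eqref{eqn:equic}, and finally replace $\eta_{vt}^\zeta(v)$ by $\overline\eta(v)$ --- matches the paper's. The genuine gap is in the central reduction. You decompose $u(t,vt+h)$ at the hitting time $H_{vt}$ of the intermediate point and assert that the ``bulk'' $u(t-H_{vt},vt)$ is essentially $u(t,vt)$ because the detour costs only $o(t)$ time. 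But a time shift of size $\Delta$ changes $\ln u(\cdot,vt)$ by $\Theta(\Delta)$ (already the factor $e^{\int_{t-\Delta}^{t}\xi(B_s)\,\d s}\ge e^{\ei\Delta}$ shows this), and $H_{vt}$ is typically of order $h$; so the time-shift correction is $\Theta(h)$ in the logarithm, i.e.\ an $O(1)$ --- not $o(1)$ --- contribution to $\frac1h\ln\big(u(t,vt+h)/u(t,vt)\big)$. It is in fact part of the leading-order answer: the limit $L(\overline\eta(v))$ is not the bare spatial cost of traversing the blocks but includes the time penalty through the $e^{\eta H}$ factor inside $L_i^\zeta(\eta)$. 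Your obstacle (iii), which hopes that $t\varepsilon(t)/\ln t\to\infty$ rescues this, is a misdiagnosis: that hypothesis only guarantees that $\mathcal E_t$ is nonempty. The paper avoids the problem altogether. It first replaces $E_{vt}\big[e^{\int_0^t\xi(B_s)\,\d s};B_t\le0\big]$ by the first-passage quantity $E_{vt}\big[e^{\int_0^{H_0}\xi(B_s)\,\d s};H_0\le t\big]$ up to multiplicative constants (a nontrivial input from \cite{DreSchmi19}, valid on $\mathcal H_{vt}$), then rewrites the ratio \emph{exactly} in terms of the tilted measures \eqref{eq:def_Px}: the residual factor involving $H_0-E^{\zeta,\eta}[H_0]$ is bounded above and below by constants via a local CLT for hitting times, and the change of the optimal tilt from $\eta_{vt}^{\zeta}(v)$ to $\eta_{v't}^{\zeta}(v')$ is controlled by a second-order Taylor expansion of $S$ (the second summand in \eqref{eq:sum}), using $(S_{v't}^{\zeta,v'})'(\eta_{v't}^{\zeta}(v'))=0$ together with Lemmas \ref{le:PertEta_n_tight} and \ref{le:concEtaEmpLT_tight}. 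None of these three steps appears in your outline, and they are exactly where the error becomes $o(1)$ after division by $h\ge C\ln t$. (Minor: your displayed expression carries a spurious minus sign; the correct normalization is $+\frac1h\sum_{i=vt+1}^{vt+h}L_i^\zeta$, which is already negative since each $L_i^\zeta<0$.)

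Part (b) as you describe it is closer to being recoverable, since only bounds of the form $e^{\pm Ch}$ are required and the $e^{O(H_{vt})}$ errors from the time shift can then be absorbed into the constant. Still, the paper's mechanism for the exponential decay is not a ``positive density of sites where $\zeta<0$'' (which would require an extra ergodic argument) but simply the identity \eqref{eq:BS} applied with the tilt $\eta_{vt}^{\zeta}(v)\in\triangle$ bounded away from $0$: it gives $-\sqrt{2(|\eta|+\es-\ei)}\,h\le\ln E_{vt+h}\big[e^{\int_0^{H_{vt}}(\zeta(B_s)+\eta)\,\d s}\big]\le-\sqrt{2|\eta|}\,h$ directly, deterministically in the environment, with no further concentration argument needed.
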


\begin{remark}
  Lemma~\ref{le:space_perturb} is a continuous-space version of
  \cite[Lemma~5.1]{CeDr-15} and its proof follows similar lines.
  We will only need part (b) of this lemma in this paper,
  for the proof of Theorem~\ref{thm:PAMBdd}. Part (a) is required in
  \cite{DreSchmi19}, where  an invariance principle for $\overline m(t)$
  is proved. However, the proof of Lemma~\ref{le:space_perturb}(b)
  heavily builds on that of (a), which is why it is natural to provide it
  here.
\end{remark}

\begin{proof}[Proof of Lemma \ref{le:space_perturb}]
  (a) It is shown in \cite[{\eqref{LD-eq:PAM_sandwich}}]{DreSchmi19}
  that  there exists a constant $\widetilde{C}=\widetilde{C}(\delta',C')$,
  with $\delta',C'$ from \eqref{eq:PAM_initial}, such that for all
  $u_0\in\mathcal{I}_{\mathrm{PAM}}$, all $v\in V,$ and all $t$ large enough
  \begin{equation}
    \label{eq:PAM_sandwich}
    \widetilde{C}^{-1} u^{\mathds{1}_{(-\infty,0]}}(t,vt) \leq u^{{u_0}}(t,vt) \leq C' u^{\mathds{1}_{(-\infty,0]}}(t,vt).
  \end{equation}
  where $u^{u_0}$ denotes the solution to \eqref{eq:PAM} with initial
  condition $u_0$. Therefore, in order to establish
  \eqref{eq:space_pert_genau}, it is enough to consider
  $u_0=\mathds{1}_{(-\infty,0]}$.

  For this $u_0$, the solution to
  \eqref{eq:PAM} can be represented by the Feynman-Kac formula (see
    Proposition~\ref{prop:FK})
  \begin{equation*}
    u(t,vt)
    =E_{vt}\Big[ e^{\int_0^t \xi(B_s)\diff s}; B_t\leq 0 \Big].
  \end{equation*}
  If follows from
  \cite[Corollary~\ref{LD-cor: Y_v approx exp part} and \eqref{LD-eq:Y_v^approxStab}]{DreSchmi19}
  that if $\mathcal H_{vt}$ occurs, then, up to a universal multiplicative
  constant, this can be approximated by
  \begin{equation*}
    E_{vt}\Big[ e^{\int_0^{H_0} \xi(B_s)\diff s}; H_0\le t \Big].
  \end{equation*}
  We now consider $t$ large enough such that $\mathcal{H}_{vt}$ occurs for all
  $v\in V$. Taking $(v,h)\in\mathcal{E}_t$ and defining
  $v':=v+\frac ht\in V$, we see that $\mathcal H_{v't}$ occurs as well.
  Therefore
  the fraction in \eqref{eq:space_pert_genau}, up to a positive
  multiplicative constant, is equal to
  \begin{align*}
    \frac{E_{v't}\big[ \exp\big\{\int_0^{H_0}\zeta(B_s)\diff s\big\}; H_0\leq t
        \big]}{E_{vt}\big[ \exp\big\{\int_0^{H_0}\zeta(B_s)\diff s\big\}; H_0\leq t
        \big]}=\frac{E_{v't}\big[
      \exp\big\{\int_0^{H_0}\big(\zeta(B_s)+\eta^\zeta_{v't}(v')\big)\diff s\big\}
      e^{-\eta^\zeta_{v't}(v') H_0}; H_0\leq t  \big]}{E_{vt}\big[
    \exp\big\{\int_0^{H_0}\big(\zeta(B_s)+\eta^\zeta_{vt}(v)\big)\diff
    s\big\}e^{-\eta^\zeta_{vt}(v) H_0}; H_0\leq t  \big]}.
  \end{align*}
  Using that $E_{vt}^{\zeta,\eta_{vt}(v)}[H_0]= E_{v't}^{\zeta,\eta_{v't}(v')}[H_0]=t$,
  recalling \eqref{eq:def_L_quant} and \eqref{eq:def_Px}, the latter
  fraction can be written as
  \begin{align*}
    \frac{ E_{v't}^{\zeta,\eta_{v't}^{\zeta}(v')}
      \big[ e^{ -\eta_{v't}^{\zeta}(v')( H_0 -
            E_{v't}^{\zeta,\eta_{v't}^\zeta(v')}[H_0] ) };
        H_0 - E_{v't}^{\zeta,\eta_{v't}^\zeta(v')}[H_0] \leq 0 \big]  }
    { E_{vt}^{\zeta,\eta_{vt}^{\zeta}(v)} \big[ e^{ -\eta_{vt}^{\zeta}(v)
          ( H_0 - E_{vt}^{\zeta,\eta_{vt}^\zeta(v)}[H_0] ) };
        H_0 - E_{vt}^{\zeta,\eta_{vt}^\zeta(v)}[H_0] \leq 0 \big]
    }
    \cdot \exp\Big\{  S_{vt}^{\zeta,v}(\eta_{vt}^{\zeta}(v)) -
      S_{v't}^{\zeta,v'}(\eta_{v't}^{\zeta}(v'))   \Big\},
  \end{align*}
  Since $\mathcal{H}_{vt}$ and $\mathcal{H}_{v't}$ occur, the first
  fraction is bounded from below and above by positive constants (see
    \cite[{Lemma~\ref{LD-le:LCLTHitting}}]{DreSchmi19}). The logarithm of
  the second factor divided by $h$ can be written as
  \begin{equation}
    \label{eq:sum}
    \frac{1}{h} \big(S_{vt}^{\zeta,v}(\eta_{vt}^{\zeta}(v)) -
      S_{v't}^{\zeta,v'}(\eta_{vt}^{\zeta}(v)) \big) + \frac{1}{h} \big(
      S_{v't}^{\zeta,v'}(\eta_{vt}^{\zeta}(v))  -
      S_{v't}^{\zeta,v'}(\eta_{v't}^{\zeta}(v')) \big).
  \end{equation}

  We claim that the second summand in \eqref{eq:sum} tends to $0$ uniformly
  in $(v,h)\in\mathcal{E}_t$ as $t\to\infty$, $\p$-a.s.
  Indeed, by a Taylor expansion we get
  \begin{equation}
    \label{eq:taylor}
    \begin{split}
      S_{v't}^{\zeta,v'}&(\eta_{vt}^{\zeta}(v)) -
      S_{v't}^{\zeta,v'}(\eta_{v't}^{\zeta}(v'))
      \\&=
      (S_{v't}^{\zeta,v'})'(\eta_{v't}^{\zeta}(v'))\cdot \big( \eta_{vt}^\zeta(v) -
        \eta_{v't}^\zeta(v') \big) +
      \frac{1}{2}(S_{v't}^{\zeta,v'})''(\widetilde{\eta}) \big( \eta_{vt}^\zeta(v) -
        \eta_{v't}^\zeta(v') \big)^2
    \end{split}
  \end{equation}
  for some $\widetilde{\eta}\in\triangle$ between $\eta_{v't}^{\zeta}(v')$
  and $\eta_{vt}^{\zeta}(v)$. By \eqref{eq:HitExpect} and
  Lemma~\ref{le:quenched_lmgf} we have
  $(S_{v't}^{\zeta,v'})'(\eta_{v't}^{\zeta}(v'))=0$.
  Lemma~\ref{le:annealed_lgmf} (b) entails that
  $( \overline{L}^\zeta_{v't})''(\cdot)$ is uniformly bounded on
  $\triangle$ and thus
  \begin{equation}
    \label{eq:S_bound}
    (S_{v't}^{\zeta,v'})''(\widetilde{\eta})= -v't(
    \overline{L}^\zeta_{v't})''(\widetilde{\eta})\in [-v'tc_1^{-1},-v'tc_1].
  \end{equation}
  Furthermore, by  Lemma~\ref{le:PertEta_n_tight} we have
  \begin{equation}
    \label{eq:eta_nPert1}
    \big| \eta_{vt}^\zeta(v) - \eta_{v't}^\zeta(v) \big| \leq c_2
    \frac{|h|}{vt} \leq c_3\frac{|h|}{t},
  \end{equation}
  and by \cite[\eqref{LD-eq:conc_eta_vt}]{DreSchmi19}
  \begin{align}
    \label{eq:eta_nPert2}
    \big| \eta_{v't}^\zeta(v)-\eta_{v't}^\zeta(v')  \big|\leq c_4
    |v-v'|=c_4\frac{|h|}{t}.
  \end{align}
  Thus, for all $t$ large enough, uniformly in $(v,h)\in\mathcal{E}_t$, we get
  \begin{equation}
    \label{eq:spacePert2ndsummand}
    \Big|\frac{1}{h} \big( S_{v't}^{\zeta,v'}(\eta_{vt}^{\zeta}(v))  -
    S_{v't}^{\zeta,v'}(\eta_{v't}^{\zeta}(v')) \big)\Big| \leq c_5
    \frac{|h|}{t}\leq \varepsilon(t),
  \end{equation}
  which tends to zero by assumption.

  It remains to show  convergence of
  the first summand in \eqref{eq:sum}. We first note that, using the
  notation introduced in \eqref{eqn:sumnotationa}, \eqref{eqn:sumnotationb},
  \begin{equation}
    \label{eq:sum2}
    \frac1h \big( S_{vt}^{\zeta,v}(\eta_{vt}^{\zeta}(v)) -
      S_{v't}^{\zeta,v'}(\eta_{vt}^{\zeta}(v)) \big)
    = \frac1h\sum_{i= vt+1}^{ v't}
    L_i^\zeta(\eta_{vt}^{\zeta}(v))  .
  \end{equation}
  To finish the proof, we will use the following lemma. Recall
  $\Cr{constHn}$ from definition \eqref{eq:H_n} and let
  $\varepsilon^*(t):=\sup_{s\in[\lfloor t\rfloor,\lceil t\rceil]}\varepsilon(s)$.

  \begin{lemma}[cf.~{\cite[Claim~5.2]{CeDr-15}}]
    For every $\delta>0$ and every $q\in\N$, there exists
    $C_0=C_0(q,\delta)>0$ such
    that  for all $t\ge 1$
    \begin{equation}
      \label{eq:spacePertLemma}
      \p\Big(
        \sup_{\substack{C_0\ln \lfloor t\rfloor \leq |h|
            \leq \lceil t\rceil\cdot\varepsilon^*(t),\\ v\in V}}
        \Big| L(\overline{\eta}(v))
        - \frac{1}{h}\sum_{i=vt+1}^{ vt+h}L_i^\zeta(\eta_{v t}^{\zeta}(v))
        \Big| >\delta, \big(
        vt\geq \Cr{constHn}\ \forall v\in V \big) \Big) \leq c t^{-q}.
    \end{equation}
    \label{le:spacePertLemma}
  \end{lemma}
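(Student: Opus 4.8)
The plan is to prove Lemma~\ref{le:spacePertLemma} via a union bound over a suitable discretization of the pair $(v,h)$, reducing the supremum to a maximum over polynomially many points, and then controlling each point by a concentration (large deviations) estimate for the centered sum $\sum_{i=vt+1}^{vt+h} \big(L_i^\zeta(\eta_{vt}^\zeta(v)) - \E[L_i^\zeta(\eta_{vt}^\zeta(v))]\big)$, using that $\E[L_i^\zeta(\eta)] = L(\eta)$ (as recalled below \eqref{eq:def_L_quant}) together with the fact that, on the event $\mathcal H_{vt}$, $\eta_{vt}^\zeta(v)$ is close to $\overline\eta(v)$, so that $L(\eta_{vt}^\zeta(v)) \approx L(\overline\eta(v))$. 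The key structural input is that $L_i^\zeta(\eta)$ depends only on $\xi$ restricted to a bounded neighborhood of $[i-1,i]$ (it is a hitting-time functional on an interval of unit length up to the boundary effect of $\lceil x\rceil$), so the summands form an (almost) $\psi$-mixing sequence, and \eqref{eq:MIX} together with $\sum_k \psi(k) < \infty$ and the uniform boundedness $\ei - \es \le \zeta \le 0$ gives exponential moment bounds and hence a Bernstein/Azuma-type tail: for $|h| \ge C_0 \ln t$ the deviation probability at a fixed $(v,h)$ is at most $c\, t^{-q'}$ for $q'$ as large as we like by taking $C_0$ large.

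Concretely, first I would fix the grid: since $|v| $ ranges over the compact set $V$ and $|h| \le \lceil t\rceil \varepsilon^*(t) \le t$, the pair $(vt, vt+h)$ lives in a bounded-by-$O(t)$ region of $\R^2$; choosing a mesh of size, say, $t^{-1}$ (or even $1$, after an equicontinuity argument) yields at most $O(t^{C})$ grid points. On each grid point the desired bound follows from the concentration estimate; the union bound then costs only a polynomial factor, which is absorbed by choosing $C_0$ (equivalently $q'$) large enough relative to $q$ plus the grid exponent. To pass from the grid back to the full supremum I would invoke the equicontinuity statement \eqref{eqn:equic} for $\eta \mapsto L_x^\zeta(\eta)$ on compacts together with the Lipschitz dependence of $\eta_{vt}^\zeta(v)$ on $v$ (from \eqref{eq:eta_nPert1}--\eqref{eq:eta_nPert2} and Lemma~\ref{le:PertEta_n_tight}), the Lipschitz continuity of $\overline\eta$ on $V$ (from \eqref{eq:triangleLip}), plus a crude a priori bound $|L_i^\zeta(\eta)| \le C$ uniform in $i$ and $\eta \in \triangle$ (from boundedness of $\zeta$); these control the change of $\frac1h \sum_{i=vt+1}^{vt+h} L_i^\zeta(\eta_{vt}^\zeta(v))$ when $(v,h)$ moves within a grid cell, at the cost of an error that can be made $\le \delta/2$ by refining the grid, while dividing by $h \ge C_0 \ln t$ helps since the number of terms changes by at most $O(1)$ per cell.

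The main obstacle I expect is the concentration estimate for the centered sum with the \emph{random, $v$- and $t$-dependent} tilting parameter $\eta_{vt}^\zeta(v)$ appearing inside the summands: one cannot directly apply an i.i.d.\ or fixed-parameter large deviation bound because $\eta_{vt}^\zeta(v)$ is itself a functional of the whole environment. The standard remedy, which I would follow here in the spirit of \cite[Claim~5.2]{CeDr-15}, is to first discretize $\eta$ over the compact interval $\triangle$ into finitely many (indeed $O(1)$, or $O(\mathrm{poly}(t))$ if one wants error $\to 0$) values $\eta^{(1)}, \dots, \eta^{(K)}$, prove the concentration bound $\p\big(|\sum_{i=vt+1}^{vt+h}(L_i^\zeta(\eta^{(j)}) - L(\eta^{(j)}))| > h\delta/2\big) \le c\,t^{-q'}$ uniformly over grid points $(v,h)$ and over $j$ — now a genuine $\psi$-mixing sum of bounded random variables, handled by a block decomposition and exponential Markov inequality using the second and first bounds in \eqref{eq:MIX} to decouple far-apart blocks — and then use equicontinuity \eqref{eqn:equic} in $\eta$ to replace $\eta^{(j)}$ by the true $\eta_{vt}^\zeta(v)$ (which lies in $\triangle$ on the event $vt \ge \Cr{constHn}\ \forall v$) and $L(\eta^{(j)})$ by $L(\overline\eta(v))$, each at the cost of $\le \delta/2$. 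Assembling the union bound over the $(v,h)$-grid and the $\eta$-grid, the total probability is $\mathrm{poly}(t)\cdot c\, t^{-q'} \le c\, t^{-q}$ once $C_0$ is chosen large enough, which completes the proof; the convergence of the first summand in \eqref{eq:sum} — and hence the proof of Lemma~\ref{le:space_perturb}(a), and subsequently (b) — then follows by a Borel--Cantelli argument along $t \in \N$ combined with \eqref{eq:sum2}.
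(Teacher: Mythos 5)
Your proposal is correct and follows essentially the same architecture as the paper's proof: a polynomial-size discretization of $(v,h)$, a Hoeffding/Bernstein-type concentration bound for the $\psi$-mixing sum of bounded summands $L_i^\zeta(\eta)$ at a frozen value of $\eta$ (the paper cites the ready-made Rio-type inequality, Lemma~\ref{lem:rioDepExp_tight}, where you propose to redo the block decomposition by hand — equivalent), a union bound absorbed by taking $C_0$ large, and an extension off the grid via equicontinuity \eqref{eqn:equic}, the Lipschitz continuity of $\overline\eta$ and the perturbation bounds for $\eta^\zeta_{\cdot}(\cdot)$ (the paper uses the explicit hitting-time formula \eqref{eq:BS} to get an $O(1/t)$ off-grid error where your crude bounded-terms-over-$h\ge C_0\ln t$ argument gives $O(1/\ln t)$; both suffice). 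The one point to be careful about: your step replacing $L(\eta^{(j)})$ by $L(\overline\eta(v))$ requires $|\eta^\zeta_{vt}(v)-\overline\eta(v)|$ to be small, and this does \emph{not} follow from the event $\mathcal H_{vt}$ (which only guarantees $\eta^\zeta_{vt}(v)\in\triangle$); it is supplied by the separate quantitative concentration estimate of Lemma~\ref{le:concEtaEmpLT_tight}(b), whose failure probability is itself polynomially small and hence fits into your union bound. The paper organizes this slightly differently — it freezes $\eta=\overline\eta(v)$ in the concentration step and handles the random tilt $\eta^\zeta_{vt}(v)$ by the triangle inequality plus \eqref{eqn:equic} and Lemma~\ref{le:concEtaEmpLT_tight}, thereby avoiding your extra $\eta$-discretization over $\triangle$ — but your variant works once that lemma is invoked explicitly.
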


  To not disturb the flow of reading, we postpone the proof of
  Lemma~\ref{le:spacePertLemma} to Section~\ref{sec:proofSpacePert}
  below. We let $A_t$ be the first event and $B_t$ be the second event on
  the left-hand side of \eqref{eq:spacePertLemma}. By
  Lemma~\ref{le:spacePertLemma} with $q=2$ and $C_0=C_0(2,\delta/3)$,
  $\sum_n \P(A_{n},B_n)<\infty$ and thus,  by the first Borel-Cantelli
  lemma, $\p$-a.s.\ the event $A_n \cap B_n$ occurs only finitely often.
  Because $\Cr{constHn}$ is $\p$-a.s.\ finite, we get
  \begin{equation}
    \label{eq:proof_1}
    \sup_{\substack{C_0\ln t \leq |h|\leq t\varepsilon(t),\\ v\in V}} \Big|
    L(\overline{\eta}(v))
    - \frac{1}{h}\sum_{i=v\lfloor t\rfloor+1}^{ v\lfloor t\rfloor+h}
    L_i^\zeta(\eta_{v\lfloor t\rfloor}^{\zeta}(v)) \Big|
    \leq \frac{\delta}{3}
  \end{equation}
  $\p$-a.s.\ for all $t$ large enough.

  To bound the right-hand side of \eqref{eq:sum2}, we need to replace
  $v\lfloor t\rfloor$ in \eqref{eq:proof_1} by $vt$.  First note that for
  all $x,y,z\in\R$ such that  $x\leq y\leq z$, due to the strong Markov
  property at $H_y$, similarly as \eqref{eqn:sumnotationa}, we have
  $\sum_{i=x+1}^z L_i^\zeta(\eta)= \sum_{i=x+1}^yL_i^\zeta(\eta) + \sum_{i=y+1}^z L_i^\zeta(\eta)$
  and thus
  \begin{align*}
    \sum_{i=v\lfloor t\rfloor+1}^{v\lfloor t\rfloor+h}L_i^\zeta(\eta) -
    \sum_{i=vt+1}^{vt+h} L_i^\zeta(\eta)
    & = \ln E_{vt}\big[ e^{\int_0^{H_{v\lfloor t\rfloor}}(\zeta(B_s)+\eta)} \big]
    - \ln E_{vt+h}\big[ e^{\int_0^{H_{v\lfloor t\rfloor + h}}(\zeta(B_s)+\eta)} \big].
  \end{align*}
  By \cite[(2.0.1), p.~204]{handbook_brownian_motion} we have
  \begin{equation}
    \label{eq:BS}
    \ln E_x\big[ e^{-c H_y}\big] = \sqrt{2c}|y-x|, \qquad \text{for all $c\geq 0$ and
      $x,y\in\R$}.
  \end{equation}
  Therefore, for all $t$ large enough, for every
  $\eta \in \triangle \subset (-\infty,0)$ and
  $0\geq \zeta(x)\geq -( \es-\ei)$,
  \begin{equation}
    \label{eq:proof_2}
    \sup_{\substack{C_0\ln t \leq |h|\leq t\varepsilon(t), \\ v\in V}}\Big|
    \frac{1}{h} \Big(\sum_{i=v\lfloor t\rfloor+1}^{v\lfloor
        t\rfloor+h}L_i^\zeta(\eta)
      - \sum_{i=vt+1}^{vt+h}
      L_i^\zeta(\eta) \Big)\Big|
    \leq \frac{2v\sqrt{2(|\eta |+(\es-\ei))}}{C_0\ln t}
    \leq \frac{\delta}{3}.
  \end{equation}
  In particular, since $\eta_{v\lfloor t\rfloor}^\zeta(v)\in\triangle\subset(-\infty,0)$
  (cf.\ \eqref{eq:triangleLip}), \eqref{eq:proof_2} holds with $\eta $
  replaced by $\eta_{v\lfloor t\rfloor}^\zeta(v)$.
  Moreover,
  By Lemma~\ref{le:PertEta_n_tight}, there exists $C>0$ such that  $\p$-a.s.
  for all $x$ large enough we have
  $\sup_{v\in V} | \eta^\zeta_{x+h}(v)-\eta^\zeta_x(v)|\leq C\frac{h}{x}$
  for all $h\in[0,x]$. Using the equicontinuity \eqref{eqn:equic} of
  $L_x^\zeta(\cdot)$, we get that $\p$-a.s.\ for all $t$ large enough,
  \begin{align}
    \sup_{\substack{C_0\ln t \leq |h|\leq t\varepsilon(t), \\ v\in V}} \Big|
    \frac{1}{h} \sum_{i=vt+1}^{vt+h}\big( L_i^\zeta(\eta_{v\lfloor
          t\rfloor}^\zeta(v)) - L_i^\zeta(\eta_{vt}^\zeta(v)) \big) \Big| \leq
    \frac{\delta}{3}.
    \label{eq:proof_3}
  \end{align}
  Applying the triangle inequality to the inequalities
  \eqref{eq:proof_1}--\eqref{eq:proof_3}, the absolute value of the
  difference of the right-hand side of \eqref{eq:sum2} and
  $L(\overline{\eta}(v))$ is bounded from above by $\delta$,
  uniformly in $(v,h)\in\mathcal{E}_t$ for all $t$ large enough,
  completing the proof of claim (a).

  (b) Analogously to the first steps in the proof of (a), it is enough to
  consider the case $u_0=\mathds{1}_{(-\infty,0]}$, and then to show
  that the expression in \eqref{eq:sum} is bounded from above and below
  by negative constants, uniformly for all
  $0< h\leq t\varepsilon(t).$ Performing the same calculations as in the
  proof of (a), i.e.~using equations \eqref{eq:taylor} to
  \eqref{eq:eta_nPert2}, one can observe that the second summand in
  \eqref{eq:sum} is contained in the interval $[-c_5\frac{h}{t},0]$ for
  $c_5$ from \eqref{eq:spacePert2ndsummand} uniformly for all $v\in V$
  and $v':=v+\frac{h}{t}\in V$ and all $t$ large enough.

  For the first summand in \eqref{eq:sum}, we mention that due to the strong
  Markov property at time $H_{vt}$, we have
  \begin{align*}
    S_{vt}^{\zeta,v}(\eta_{vt}^{\zeta}(v)) -
    S_{v't}^{\zeta,v'}(\eta_{vt}^{\zeta}(v)) & = \ln E_{vt+h} \big[
      e^{\int_0^{H_0}(\zeta(B_s)+\eta_{vt}^{\zeta}(v))\diff s}  \big] -  \ln E_{vt}
    \big[ e^{\int_0^{H_0}(\zeta(B_s)+\eta_{vt}^{\zeta}(v))\diff s} \big]
    \\& =\ln E_{vt+h} \big[
      e^{\int_0^{H_{vt}}(\zeta(B_s)+\eta_{vt}^{\zeta}(v))\diff s}  \big].
  \end{align*}
  Using \eqref{eq:BS}, \eqref{eq:POT}  and
  $\eta_{vt}^{\zeta}(v)\in\triangle\subset(-\infty,0)$, for all $t$ large
  enough, we get
  \begin{equation}
    -\sqrt{2(|\eta_{vt}^{\zeta}(v)|+\es-\ei)}h
    \leq \ln E_{vt+h} \big[ e^{\int_0^{H_{vt}}(\zeta(B_s)+\eta_{vt}^{\zeta}(v))\diff s}  \big]
    \leq -\sqrt{2|\eta_{vt}^{\zeta}(v)|}h
  \end{equation}
   and we can conclude.
\end{proof}

\subsection{Proof of Lemma~\ref{le:spacePertLemma}}
\label{sec:proofSpacePert}

To finish the proof of Lemma~\ref{le:space_perturb}, we still have to
provide the proof of Lemma~\ref{le:spacePertLemma}.

\begin{proof}[Proof of Lemma~\ref{le:spacePertLemma}]
  We decompose the difference in \eqref{eq:spacePertLemma} as
  \begin{equation}
    \label{eq:dec}
    L(\overline{\eta}(v)) - \sum_{i= vt+1}^{vt+h}
    L_i^\zeta(\eta_{vt}^{\zeta}(v))
      = L(\overline{\eta}(v)) - \sum_{i= vt+1}^{vt+h}
    L_i^\zeta(\overline{\eta}(v)) +  \sum_{i=
      vt+1}^{vt+h}\big(L_i^\zeta(\eta_{vt}^{\zeta}(v)) -
    L_i^\zeta(\overline{\eta}(v))\big).
  \end{equation}
  To bound the last summand on the right-hand side, we again recall that the family
  $\big(L_i^\zeta(\cdot): i\in\R,0\ge \zeta(\cdot) \ge \ei - \es\big)$ is
  bounded and uniformly equicontinuous on $\triangle$. Therefore, by
  Lemma~\ref{le:concEtaEmpLT_tight}, we have
  \begin{equation*}
    \p\Big( \sup_{\substack{\ln \lfloor t\rfloor\leq |h|\leq \lceil
          t\rceil\varepsilon^*(t),\\ v\in V}} \Big| \frac{1}{h}\sum_{i=vt+1}^{ vt+h}
    \big(L_i^\zeta(\eta_{vt}^{\zeta}(v)) - L_i^\zeta(\overline{\eta}(v))\big)\Big|
    >\frac{\delta}{2},\ vt\geq \Cr{constHn}\ \forall v\in V \Big) \leq c t^{-q}
  \end{equation*}
  for $t$ large enough. It thus suffices to bound the first summand in
  \eqref{eq:dec},
  i.e.~to show that there exists $C_0=C_0(\varepsilon,q)>0$ such that for
  all $t$ large enough we have
  \begin{equation}
    \label{eq:uniform_bound_Hoeffding}
    \p\Big( \sup_{\substack{C_0\ln \lfloor t\rfloor\leq |h|\leq \lceil
        t\rceil\varepsilon^*(t),\\ v\in V }} \Big| L(\overline{\eta}(v)) -
    \frac{1}{h}\sum_{i=vt+1}^{ vt+h} L_i^\zeta(\overline{\eta}(v)) \Big|
    >\frac{\delta}{2}\Big) \leq c t^{-q}.
  \end{equation}

  Hence, for every $h$ we write
  $hL(\eta) - \sum_{i=vt+1}^{ vt+h} L_i^\zeta(\eta)
  = \sum_{i=1}^{\lfloor h\rfloor + 2} \widetilde{L}_i^{\zeta,h,v}(\eta)$,
  where $(\widetilde{L}_i^{\zeta,h,v}(\eta))_{i=1}^{\lfloor |h|\rfloor+2}$
  is a sequence of centered random variables,  which are $\p$-a.s.\
  uniformly bounded in $v\in V$, $h\in\R$, $t\in\R$ and $\eta\in\triangle$, as
  well as fulfill the mixing condition from
  \cite[{Lemma~\ref{LD-le:expMixLi}}]{DreSchmi19}. Thus, we can apply
  Lemma~\ref{lem:rioDepExp_tight} to show that there exist constants $C>0$ and
  $C_0(\varepsilon,q)>0$, such that for all $v\in V$ and all $h$
  fulfilling $|h|\geq C_0\ln t$ we have
  \begin{equation*}
    \p\Big( \Big| L(\eta) - \frac{1}{h}\sum_{i=vt+1}^{ vt+h}
      L_i^\zeta(\eta(v)) \Big| >\frac{\delta}{2}\Big) \leq \sqrt{e}\exp\Big\{
      -\frac{1}{2C\lfloor h \rfloor}\Big( \frac{|h|\varepsilon}{2} \Big)^2  \Big\}
    \leq ct^{-q-3}
  \end{equation*}
  for all $t$ large enough.

  To get the  ‘‘uniform bound'' from
  \eqref{eq:uniform_bound_Hoeffding}, we first show it on the grid
  $V_n:=(\frac{1}{n}\Z) \cap V$
  and
  $C_n^{(t)}:=(\frac{1}{n}\Z)\cap[\ln \lfloor t\rfloor,\lceil t\rceil\varepsilon^*(t)]$,
  $n\in\N$. Indeed, because $|V_n|\leq (\text{diam}(V)+1)n$ and
  $|C_n^{(t)}|\leq nt$, we get
  \begin{equation}
    \label{eq:grid}
    \p\Big( \sup_{{|h| \in C^{(t)}_{\lfloor t\rfloor},\ v\in V_{\lfloor
            t\rfloor} }} \Big| L(\overline{\eta}(v)) -
      \frac{1}{h}\sum_{i=vt+1}^{ vt+h} L_i^\zeta(\overline{\eta}(v))
      \Big| >\frac{\delta}{2}\Big) \leq \text{diam}(V)C\cdot t^{-q}
  \end{equation}
  for all $t$ large enough. To control all $v\in V$ and
  $|h|\in[\ln \lfloor t\rfloor, \lceil t\rceil\varepsilon^*(t)]$, we note
  that for all $s\geq 0$ we have
  \begin{align*}
    &\ln E_{vt+\frac{k}{n}+s}\big[ e^{\int_0^{H_{vt}}(\zeta(B_s)+\eta)\diff s}
      \big] - \ln E_{vt+\frac{k}{n}}\big[ e^{\int_0^{H_{vt}}(\zeta(B_s)+\eta)\diff s}
      \big]
    \\& \quad=
    \ln E_{vt+\frac{k}{n}+s}\big[
    e^{\int_0^{H_{vt}+\frac{k}{n}}(\zeta(B_s)+\eta)\diff s} \big]
           \in\Big[-s\sqrt{2(\es-\ei+|\eta|)},0\Big]
  \end{align*}
  where the last display is again a consequence of \eqref{eq:BS}. Thus
  all $h$  not on the grid the terms in \eqref{eq:grid} differ at most by
  a term of order $1/t$. A similar statement holds for all $v\in V$ not
  on the grid, because $\eta(\cdot)$ is uniformly Lipschitz continuous on
  $V$ (see \eqref{eq:triangleLip}). Thus the uniform bound in
  \eqref{eq:grid} can be extended to be valid for all $h$ such that
  $C_0\ln \lfloor t\rfloor\leq |h|\leq \lceil t\rceil\varepsilon^*(t)$.
  This completes the proof.
\end{proof}

\subsection{Proof of Theorem~\ref{thm:PAMBdd}}

We can now finally return to our first main result: the boundedness of
the front of \eqref{eq:PAM}. Its proof builds on the perturbation
estimate from Lemma~\ref{le:space_perturb} (b), and is rather straightforward.

\begin{proof}[Proof of Theorem~\ref{thm:PAMBdd}]
  Due to \eqref{eq:VEL}, we can choose a compact interval $V\subset (v_c,\infty)$  such that $v_0$ is in the interior of $V$.
  Observe first that the existence of the Lyapunov exponent for the
  solution of \eqref{eq:PAM} (see Proposition~\ref{prop:lyapunov_tight})
  directly implies that the left front $\mover^{M,-}(t)$ as well as the
  right front $\mover^\varepsilon (t)$ of the solution to \eqref{eq:PAM}
  (as defined in \eqref{eq:def_front_KPP})
  satisfy, for arbitrary initial condition
  $u_0 \in \mathcal I_{\mathrm{PAM}}$ and every $\varepsilon ,M >0$,  $\p\text{-a.s.}$
  \begin{equation}
    \label{eq:first_ord_breakpoint}
    \lim_{t\to\infty}\frac{\mover^{\varepsilon}(t)}{t}  =
    \lim_{t\to\infty}\frac{\mover^{M,-}(t)}{t}  =
    v_0.
  \end{equation}
  In particular,
  $\mover^{\varepsilon}(t)/t \in V$ and $\mover^{M,-}(t)/t\in V$ for $t$ large
  enough, since we assume that
  $v_0$ is in the interior of $V$, and
  $a_t:= \mover^\varepsilon(t) - \mover^{M,-}(t) \in o(t)$.

  By Lemma~\ref{le:space_perturb} (b),
  uniformly in $u_0\in \mathcal I_{\mathrm{PAM}}$ and $v\in V$, for all $t$
  large enough such that $vt+a_t\in V$, we get
  \begin{equation}
    \begin{split}
      \frac{u(t,vt+a_t)}{u(t,vt)}
      &= \prod_{k=1}^{\lfloor\sqrt{t} \rfloor}
      \frac{u(t,vt+k a_t/\lfloor\sqrt{t} \rfloor)}
      {u(t,vt+(k-1)  a_t/\lfloor\sqrt{t} \rfloor)}
      \\&\leq \big(\Cr{const_space_perturbation}
        e^{-a_t/(\Cr{const_space_perturbation}\lfloor\sqrt
          t\rfloor)}\big)^{\lfloor\sqrt{t}\rfloor}
      =e^{-a_t/\Cr{const_space_perturbation}(1-  \frac{ \lfloor\sqrt{t} \rfloor}{ a_t}
          \cdot \Cr{const_space_perturbation}\ln\Cr{const_space_perturbation}  )}.
    \end{split}
    \label{eq:perturb_breakpoint}
  \end{equation}

  Now we have all we need to prove Theorem~\ref{thm:PAMBdd}. Set
  $C_{\varepsilon,M}:=2\Cr{const_space_perturbation}\ln\big(\frac{2M\Cr{const_space_perturbation}}{\varepsilon} \big)$
  and
  $\varepsilon(t):=2t^{-1/2}\Cr{const_space_perturbation}\ln \Cr{const_space_perturbation}$.
  Assume by contradiction that the claim of the theorem does not hold.
  Then there exist $0<\varepsilon\le M$ and a (random) sequence
  $(t_n)_{n\in\N}$ such that $t_n\tend{n}{\infty}\infty$ and
  $a_{t_n}=\mover^{\varepsilon}(t_n)-\mover^{M,-}(t_n)\geq C_{\varepsilon,M}$
  for all $n\in\N$. Recalling that $\mover^\varepsilon(t)/t \in V$,
  we get for all $n$ large enough that
  \begin{equation*}
    \varepsilon=u(t_n,\mover^{\varepsilon}(t_n))
    = u\big(t,\mover^{M,-}(t_n)+a_{t_n}\big)
    \leq u(t_n,\mover^{M,-}(t_n)) \cdot
    \Cr{const_space_perturbation}e^{-a_{t_n}/2\Cr{const_space_perturbation}}
    \leq \varepsilon/2,
  \end{equation*}
  where in the first inequality we used Lemma~\ref{le:space_perturb} (b)
  if $a_{t_n}\leq t_n\varepsilon(t_n)$ and \eqref{eq:perturb_breakpoint}
  if $a_{t_n}> t_n\varepsilon(t_n)$. This is a contradiction. As a
  consequence, we must have
  $0\leq \mover^{\varepsilon}(t)-\mover^{M}(t)\leq C_{\varepsilon,M}$ for
  all $t$ large enough.
  Furthermore, this inequality holds uniformly for all $u_0\in\mathcal{I}_{\text{PAM}}(\delta',C')$, because $\Cr{const_space_perturbation}$ is independent of $u_0\in\mathcal{I}_{\text{PAM}}(\delta',C')$, proving the claim of the theorem.
\end{proof}

\section{Unbounded transition front for randomized F-KPP equation}
\label{sec:proof_unbounded}

In this section we show our main results about the transition front for
the solution to \eqref{eq:KPP}, Theorems~\ref{thm:FKPPunBdd}
and~\ref{thm:non-monotone}.
The proofs are based on the following branching process representation of
the solution.

\begin{proposition}[{\cite[{Proposition~\ref{LD-prop:mckean1}}]{DreSchmi19}}]
  \label{prop:KPPBRM}
  Let $\xi: \R \to [0, \infty)$ be a non-negative bounded  function
  satisfying \eqref{eq:hoelder_cont}, $F$ as in \eqref{eq:standard_cond},
  and let $f: \R \to [0,1]$ be a function which can be
  pointwise approximated by an increasing sequence of continuous functions. Then the function
  \begin{equation*}
    w(t,x) := 1- {\tt E}^\xi_{x} \Big[ \prod_{Y \in N(t)} f(Y_t) \Big]
  \end{equation*}
  solves the equation
  \begin{equation*}
     w_t = \frac{1}{2} w_{xx} + \xi(x) F(w)
  \end{equation*}
  with initial condition $w(0, \cdot) =1- f$.
  In particular,
  \begin{equation} \label{eq:probMaxFKPP}
  w(t,x)= \ttP^\xi_x( N^\leq (t,0)\neq \emptyset )
  \end{equation}
   solves this
  equation with
  $f=\mathds{1}_{(0,\infty)}$, i.e.~$w(0,\cdot)=\mathds{1}_{(-\infty,0]}$.
\end{proposition}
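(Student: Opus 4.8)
The plan is to reduce to the classical McKean representation, but with a space-dependent branching rate. Fix $\omega\in\Omega$, so that $\xi=\xi(\cdot,\omega)$ is a fixed non-negative bounded function satisfying \eqref{eq:hoelder_cont} and BBMRE is simply a branching Brownian motion whose branching rate at $y$ equals $\xi(y)$. Since $\xi\le\es<\infty$ and the offspring law has mean $\sum_k kp_k=2$ by \eqref{eq:mom_offspr}, a first-moment (many-to-one) computation gives $\ttE^\xi_x[|N(t)|]=E_x\big[\exp\{\int_0^t\xi(B_s)\,\d s\}\big]\le e^{\es t}<\infty$; in particular $|N(t)|<\infty$ $\ttP^\xi_x$-a.s.\ for every $t$, so the finite product $\prod_{Y\in N(t)}f(Y_t)$ is well defined, $v(t,x):=\ttE^\xi_x\big[\prod_{Y\in N(t)}f(Y_t)\big]\in[0,1]$, and $w=1-v\in[0,1]$. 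I first treat continuous $f$. Write $h(u):=\sum_{k\ge1}p_ku^k$ for the offspring generating function, so $h(1)=1$ and, with $F$ as in \eqref{eq:standard_cond}, $F(1-u)=u-h(u)$ for $u\in[0,1]$. Conditioning on the Brownian trajectory $(B_s)_{0\le s\le t}$ of the initial particle and on its first branching time $\tau$ (which, given the trajectory, has hazard rate $\xi(B_s)$, i.e.\ $\ttP(\tau>s\mid B)=\exp\{-\int_0^s\xi(B_r)\,\d r\}$), using on $\{\tau>t\}$ that the process is a single particle at $B_t$ and on $\{\tau=s\}$ the branching property to factorize over the independent subtrees started at $B_s$, one obtains the renewal equation
\begin{equation*}
  v(t,x)=E_x\Big[e^{-\int_0^t\xi(B_r)\,\d r}\,f(B_t)+\int_0^t\xi(B_s)\,e^{-\int_0^s\xi(B_r)\,\d r}\,h\big(v(t-s,B_s)\big)\,\d s\Big].
\end{equation*}

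This is exactly the Duhamel (mild) form of the semilinear equation $v_t=\tfrac12 v_{xx}-\xi v+\xi\,h(v)$ with $v(0,\cdot)=f$, the first term being the Feynman--Kac semigroup with potential $-\xi$ applied to $f$, cf.\ Proposition~\ref{prop:FK}. One checks that $v$ is bounded and continuous on $(0,\infty)\times\R$, continuous up to $t=0$ with $v(0,\cdot)=f$ — either directly from the integral identity via smoothing of the Feynman--Kac semigroup and dominated convergence, or from weak continuity of $x\mapsto\ttP^\xi_x$ for continuous bounded test functions. Since $\xi$ is Hölder by \eqref{eq:hoelder_cont} and $h$ is smooth, standard parabolic (Schauder) regularity theory, cf.\ \cite{Fr-85}, then upgrades this bounded mild solution to a classical solution of $v_t=\tfrac12 v_{xx}-\xi v+\xi h(v)$. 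Setting $w:=1-v$ and using $v-h(v)=F(1-v)=F(w)$ gives $w_t=-v_t=\tfrac12 w_{xx}+\xi(v-h(v))=\tfrac12 w_{xx}+\xi F(w)$, with $w(0,\cdot)=1-f$; by the remark following Proposition~\ref{prop:FK} this classical solution is the unique function satisfying \eqref{eq:feynman_kac_KPP} with $w_0=1-f$.

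For general $f$, take $f_n\uparrow f$ pointwise with each $f_n\colon\R\to[0,1]$ continuous. Because $|N(t)|<\infty$ a.s., the finite products satisfy $\prod_{Y\in N(t)}f_n(Y_t)\uparrow\prod_{Y\in N(t)}f(Y_t)$, hence by monotone convergence $v_n(t,x):=\ttE^\xi_x\big[\prod_{Y\in N(t)}f_n(Y_t)\big]\uparrow v(t,x)$, i.e.\ $w_n:=1-v_n\downarrow w=1-v$. Each $w_n$ satisfies \eqref{eq:feynman_kac_KPP} with $w_0^{(n)}=1-f_n$ by the previous paragraph. Note $F$ is concave on $[0,1]$ with $F(0)=0=F(1)$ and $F'(0)=1$, so $u\mapsto F(u)/u$ extends continuously to $[0,1]$ with values in $[0,1]$; hence the exponent $\int_0^t\xi(B_s)F(w_n(t-s,B_s))/w_n(t-s,B_s)\,\d s$ is $\le\es t$ uniformly in $n$. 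Letting $n\to\infty$ in \eqref{eq:feynman_kac_KPP} for $w_n$ and using dominated convergence (the exponential factor is $\le e^{\es t}$, $w_0^{(n)}\le1$, $w_n(t-s,\cdot)\to w(t-s,\cdot)$ pointwise, and $u\mapsto F(u)/u$ is continuous) shows that $w=1-v$ satisfies \eqref{eq:feynman_kac_KPP} with $w_0=1-f$, i.e.\ $w$ solves the stated equation with initial condition $1-f$. Finally, for $f=\mathds{1}_{(0,\infty)}$ (the increasing pointwise limit of the continuous functions $x\mapsto(nx\vee0)\wedge1$) one has $\prod_{Y\in N(t)}f(Y_t)=\mathds{1}_{\{N^{\le}(t,0)=\emptyset\}}$, whence $w(t,x)=\ttP^\xi_x(N^{\le}(t,0)\neq\emptyset)$ and $w_0=\mathds{1}_{(-\infty,0]}$, which is \eqref{eq:probMaxFKPP}.

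I expect the main obstacles to be (i) the rigorous justification of the renewal equation, which requires conditioning on the whole Brownian trajectory of the initial particle and invoking the branching property to factorize the contribution of the independent subtrees, and (ii) the passage from the mild to the classical solution, which is precisely where the Hölder assumption \eqref{eq:hoelder_cont} on $\xi$ is genuinely needed; the monotone-limit step for discontinuous $f$ and the verification of the bound on the exponent are then routine.
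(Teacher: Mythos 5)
Your argument is correct and is essentially the standard McKean-type derivation that the paper imports from \cite{DreSchmi19}: the renewal (Duhamel) equation obtained by conditioning on the first branching time, the identity $F(1-u)=u-h(u)$, parabolic regularity to pass from the mild to the classical formulation, and monotone approximation by continuous $f_n$ together with the Feynman--Kac characterization \eqref{eq:feynman_kac_KPP} to handle $f=\mathds{1}_{(0,\infty)}$. Nothing essential is missing; the points you flag (justifying the renewal equation via the branching property, and the Schauder step where \eqref{eq:hoelder_cont} enters) are exactly the places where the routine details live.
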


\begin{remark} \label{rem:KPPBRM usage}
  Note that Proposition~\ref{prop:KPPBRM} slightly differs from the
  usual McKean representation in homogeneous branching environment.
  More precisely, for $\xi \equiv c$ being a constant function and
  $w(0,\cdot) = \mathds{1}_{(-\infty,0]}$, the canonical representation is given by
  $w(t,x) = \ttP^c_0(N^{\ge}(t,x) \neq \emptyset)$. This representation
  follows from Proposition~\ref{prop:KPPBRM} using the symmetry
  $\ttP^c_x( N^\leq(t,0)\neq \emptyset )=\ttP^c_0(N^\geq(t,x)\neq \emptyset)$
  which is a consequence of the reflection symmetry of the Brownian
  motion and the homogeneity of the environment. However, this identity
  fails to hold if $\xi$ is non-homogeneous.

\end{remark}

\subsection{The potential}

We start the proof of Theorem~\ref{thm:FKPPunBdd} by constructing a
suitable potential $\xi $, for which we then show the unboundedness of
the transition front of the solution to \eqref{eq:KPP}. We fix two
positive finite constants  $\es $ and $\ei$ such that
\begin{align}
  \label{eq:relation_es_ei}
 \frac{\es}{\ei} > 2.
\end{align}
We further let $\delta_1,\delta_2\in(0,1)$ be small positive constants,
which will be fixed at the end of the proof of
Lemma~\ref{lem:LDestimates}, see the paragraph below \eqref{eq:tlb}.

It is an
interesting open question whether the condition \eqref{eq:relation_es_ei}
is necessary for the unboundedness of the front. We could not improve it using
the methods of this paper, see in particular after \eqref{eq:ABmax} where
the condition \eqref{eq:relation_es_ei} is crucially
needed.

Let furthermore $\chi:[0,\infty) \to [0,1]$ be a continuous non-increasing
function with $\chi(x)=1$ for $x\le 1$ and  $\chi(x)=0$ for $x\ge  2$,
and let  $\omega = (\omega^i)_{i\in \mathbb Z}$ be a Poisson point
process on $\mathbb R$ with intensity $1$ constructed on
$(\Omega , \mathcal F, \mathbb P)$. We then define our potential via
\begin{equation}
  \label{eq:def_xi}
  \xi(x):=\ei + (\es-\ei)\cdot\sup\{ \chi(|x-\omega^i|):i\in \mathbb Z\}.
\end{equation}

Observe that the map $x\mapsto\xi(x)$ is a continuous function,
$\xi(x)\in[\ei,\es]$ for all $x\in\R$, $\xi(x)=\ei$ if $|x-\omega^i|>2$
for all $i,$ and $\xi(x)=\es$ if there exists $\omega^i$ such that
$|x-\omega^i|\leq 1$. Also, using the properties of the Poisson point process,
$\xi$ fulfills \eqref{eq:POT}, \eqref{eq:STAT} and \eqref{eq:MIX}. See
Figure~\ref{fig:pot} for an illustration of this potential.

\begin{figure}[ht]
  \centering \includegraphics[width=0.9\linewidth]{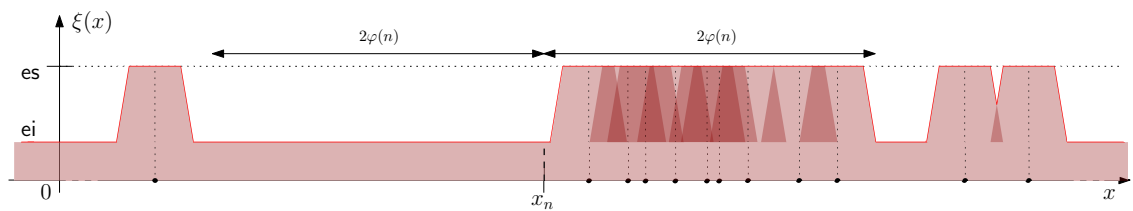}
  \caption{Realization of a potential $\xi$ (top red line) fulfilling
    \eqref{eq:xn_properties} with $\varphi(n)=c_0\ln n$. Here we chose
    $\chi(x)=((3-2x)\wedge 1) \vee 0$.}
  \label{fig:pot}
\end{figure}

The crucial property of this potential is that it has long stretches
where it equals $\ei$ that are adjacent to comparably long stretches
where it equals $\es$, as is proved in the next lemma.

\begin{lemma}
  \label{lem:pot}
  There is a constant $c_0>0$ such that
  $\mathbb P$-a.s.~there exists a (random) increasing sequence
  $(x_n)_{n\in\N}$ of reals tending to infinity, such that
  \begin{align} \label{eq:xn_properties}
    \begin{split}
      \xi(x) &= \ei \quad \forall x \in [x_n- 2 c_0\ln n, x_n],\\
      \xi(x) &= \es \quad \forall x \in [x_n+2,x_n+2 c_0 \ln n-2],
    \end{split}
  \end{align}
  and $\xi(\cdot)$ is non-decreasing on $[x_n-2c_0\ln n, x_n+2c_0\ln n-2]$.
  Moreover, $\mathbb P$-a.s.,
  \begin{align}
    \label{eq:xn_propertiesGrowth}
    1\le \liminf_{n\to\infty} n^{-1 }x_n \le  \limsup_{n\to\infty} n^{-1}x_n \le 2 .
  \end{align}
\end{lemma}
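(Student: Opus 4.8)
The whole argument is driven by the explicit geometry of the Poisson point process $\omega=(\omega^i)_{i\in\Z}$ defining $\xi$ in \eqref{eq:def_xi} (note that \eqref{eq:relation_es_ei} is not needed here). Since $\chi\equiv1$ on $[0,1]$ and $\chi\equiv0$ on $[2,\infty)$, one has $\xi(x)=\ei$ whenever $\omega$ has no point within distance $2$ of $x$, and $\xi(x)=\es$ whenever $\omega$ has a point within distance $1$ of $x$. My plan is to isolate a local pattern of $\omega$ that forces \eqref{eq:xn_properties}: call a location $z\in\R$ \emph{$L$-good} (for a scale $L>0$) if $\omega$ has no point in $[z-L-20,z+1)$ but has a point in each of the unit intervals $[z+1,z+2],[z+2,z+3],\dots,[z+\lceil L\rceil+9,z+\lceil L\rceil+10]$. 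On this event, letting $\beta=\beta(z)$ be the leftmost point of $\omega$ in $[z+1,\infty)$, one has $\beta\in[z+1,z+2]$, $\beta$ is the right endpoint of an $\omega$-gap of length at least $L+20$, and $\omega$ is $1$-dense on $[\beta,\beta+L]$; hence $\xi\equiv\ei$ on $[\beta-L-5,\beta-2]$, $\xi\equiv\es$ on $[\beta-1,\beta+L]$, and $\xi$ is non-decreasing on $[\beta-L-5,\beta+L]$. The only non-formal point is the monotonicity on the transition window $[\beta-2,\beta-1]$: there the only Poisson points within distance $2$ of a given $x$ lie to the \emph{right} of $x$ (the gap lies to its left), so each map $x\mapsto\chi(\omega^i-x)$, and hence their supremum $\sup_i\chi(|x-\omega^i|)$, i.e.\ $\xi$, is non-decreasing. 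Consequently, if $x_n\in[\beta-3,\beta-2)$ and $2c_0\ln n\le L$, then $z$ being $L$-good yields exactly the three conclusions of \eqref{eq:xn_properties} for $x_n$.

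First I would record the probability estimate. The ``empty gap'' and ``dense cluster'' conditions concern $\omega$ on essentially disjoint intervals, hence are independent, and a direct count gives $\p(z\text{ is }L\text{-good})\ge e^{-c_1}\rho^{\,L}$ with $\rho:=e^{-1}(1-e^{-1})\in(0,1)$ and $c_1$ universal. Here comes the one genuinely quantitative choice: fix the constant of the lemma as any $c_0\in\big(0,\tfrac12(\ln\rho^{-1})^{-1}\big)$ and set $\gamma:=2c_0\ln\rho^{-1}\in(0,1)$, so that for the scale $L_k:=2c_0(k+1)\ln2$ one has $\p(z\text{ is }L_k\text{-good})\ge e^{-c_1}2^{-\gamma(k+1)}$.

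Next I would run a Borel--Cantelli argument on a \emph{geometric} sequence of windows, to force a good location at a \emph{linear} position. For $k\in\N$ let $W_k:=[\,2^{k+1}-\ell_k-L_k,\ 2^{k+1}-L_k\,]$ with $\ell_k:=\lceil k^2 2^{\gamma k}\rceil$; since $\gamma<1$ one has $\ell_k=o(2^k)$, so the $\omega$-windows relevant to $W_k$ (intervals of length $O(\ell_k)$) are pairwise disjoint for $k$ large. Inside $W_k$ pick $N_k\asymp\ell_k/L_k\asymp k\,2^{\gamma k}$ locations spaced more than $2L_k+40$ apart, so that the corresponding ``$L_k$-good'' events are independent; then $\p(\text{no chosen }z\in W_k\text{ is }L_k\text{-good})\le(1-e^{-c_1}2^{-\gamma(k+1)})^{N_k}\le\exp(-c\,k)$, which is summable. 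By Borel--Cantelli there is $\p$-a.s.\ a random $K_0$ such that for every $k\ge K_0$ some $z_k\in W_k$ is $L_k$-good; writing $\beta_k:=\beta(z_k)$, one has $\beta_k\in[2^{k+1}-\ell_k-L_k+1,\ 2^{k+1}]$, so $\beta_k=2^{k+1}(1+o(1))$ and $\beta_{k-1}<\beta_k$ for $k$ large.

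It then remains to assemble the sequence and check growth. Let block $k\ge K_0$ host the indices $n\in[2^k,2^{k+1})$ and set $x_n:=\beta_k-3+(n-2^k)/2^k\in[\beta_k-3,\beta_k-2)$, defining the finitely many $x_n$ with $n<2^{K_0}$ arbitrarily so as to keep the sequence increasing. Within a block $(x_n)$ is increasing, and across blocks $x_{2^{k+1}-1}<\beta_k\le 2^{k+1}<\beta_{k+1}-3=x_{2^{k+1}}$ for $k$ large, so $(x_n)_{n\in\N}$ is increasing and tends to infinity. Since $2c_0\ln n\le 2c_0\ln 2^{k+1}=L_k$, the interval $[x_n-2c_0\ln n,x_n]$ lies in the $\ei$-plateau $[\beta_k-L_k-5,\beta_k-2]$, the interval $[x_n+2,x_n+2c_0\ln n-2]$ lies in the $\es$-plateau $[\beta_k-1,\beta_k+L_k]$, and $[x_n-2c_0\ln n,x_n+2c_0\ln n-2]$ lies in the monotonicity interval from the first paragraph; this is precisely \eqref{eq:xn_properties}. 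Finally $x_n\in[\beta_k-3,\beta_k-2)\subset[2^{k+1}(1-o(1)),\,2^{k+1}]$ while $n\in[2^k,2^{k+1})$, whence $x_n/n\in[1-o(1),\,2]$, which gives \eqref{eq:xn_propertiesGrowth}.

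The hard part is conceptual rather than technical: the tension between requiring the good pattern at logarithmic scale $L\asymp\ln n$ and at a linear position $\asymp n$. A single $L$-good event has probability only $\asymp\rho^{L}$, which at scale $L\asymp\ln(\text{position})$ is a small but strictly positive negative power of the position; hence the obvious guess ``$x_n:=$ the $n$-th good location'' produces a super-linearly growing sequence and violates \eqref{eq:xn_propertiesGrowth}. This is what forces both the choice $c_0<\tfrac12(\ln\rho^{-1})^{-1}$ (so that the search windows $W_k$, of length $\asymp k^2 2^{\gamma k}=o(2^k)$, sit just below $2^{k+1}$ and are disjoint across scales) and the device of placing $\sim 2^k$ indices near each $\beta_k\approx 2^{k+1}$. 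Everything else is routine but fiddly bookkeeping: tracking the $O(1)$ offsets in the definition of ``$L$-good'' so that the $\ei$-plateau, the monotone transition window, and the $\es$-plateau line up simultaneously for every index of a block, and checking disjointness of the $\omega$-windows used at different scales so that the independence in the Borel--Cantelli step is legitimate.
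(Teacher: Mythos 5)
Your proposal is correct and rests on the same mechanism as the paper's proof: a gap-then-cluster pattern of the Poisson process whose probability at scale $c_0\ln(\cdot)$ is a power $(\cdot)^{-\gamma}$ with $\gamma<1$ for $c_0$ small enough, beaten by the number of independent trial locations available in a window of linear size, followed by the first Borel--Cantelli lemma. The only difference is organizational: the paper searches $[n,2n]$ afresh for every $n$ using $\Theta(n/\ln n)$ independent trials, whereas you search once per dyadic block and recycle the single good location $\beta_k$ for all indices $n\in[2^k,2^{k+1})$.
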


\begin{proof}
  The proof is an easy application of the Borel-Cantelli lemma.
  For $k \in \mathbb N$, let $A_{k,n}$ be the event
  \begin{equation*}
    A_{k,n} = \bigg\{\,
      \begin{aligned}
        \omega :{}&\omega \cap [n+(4k -2)c_0\ln n-2,n+4kc_0\ln n+2)  = \emptyset
        \quad \text{and }
        \\&\omega \cap [n+4kc_0 \ln n  +\ell, n+4k c_0 \ln n + \ell +1) \neq \emptyset
        \text{ for all }
          \ell = 2,\dots,\lfloor 2 c_0 \ln n\rfloor-3\,
        \end{aligned}
      \bigg\}.
  \end{equation*}
  Observe that if $A_{k,n}$ occurs, then $\xi $ satisfies
  \eqref{eq:xn_properties} with $x_n = n+4k c_0 \ln n$, and that $A_{k,n}$
  only depends on $\omega  $ in the interval
  $[n+(4k-2)\ln n -2, n+4k \ln n + \lfloor 2 \ln n \rfloor -2)$.
  Therefore,  the events $(A_{k,n})_{k\in \mathbb N}$ are independent.
  Moreover,
  \begin{equation*}
    \mathbb P(A_{k,n}) = e^{- 2c_0\ln n - 4}
    \prod_{\ell=2}^{\lfloor 2 c_0 \ln n\rfloor-3}(1-e^{-1}) \ge \alpha
    ^{-c_0 \ln n} = n^{-c_0 \ln \alpha }
  \end{equation*}
  for some $\alpha >1 $ independent of $c_0$. Therefore, using $1-x \le e^{-x}$,
  \begin{equation*}
    \mathbb P\Big(\bigcap_{k=0}^{n/(4c_0\ln n)-1} A_{k,n}^c\Big)
    \le (1-n^{-c_0 \ln \alpha })^{n/(4c_0\ln n)} \le
    \exp\{ -n ^{1-c_0 \ln \alpha } (4 c_0\ln n)^{-1}\}.
  \end{equation*}
  For $c_0< 1/\ln \alpha $, the right-hand side is summable and thus by
  the Borel-Cantelli lemma, almost surely for $n$ large enough, there
  exists $k\in [0, n/(4c_0 \ln n)-1]$ such that $A_{k,n}$ occurs. This
  implies that $\mathbb P$-a.s.~for $n$ large enough there is
  $x \in [n,2n]$ satisfying \eqref{eq:xn_properties}, completing the proof.
\end{proof}

In the following, if not mentioned otherwise, we will always refer to the
sequence $(x_n)$ as the one the existence of which is provided by
\eqref{lem:pot}.

\subsection{The coupling}
\label{ss:coupling}

In the next step towards a proof of Theorem~\ref{thm:FKPPunBdd}, we
construct a coupling of two BBMREs started in the vicinity of the points
$x_n$ where the potential satisfies the conditions
\eqref{eq:xn_properties} of Lemma~\ref{lem:pot}.

Throughout this section, we assume that the constant $c_0$ and the random
sequence $x_n$ are as in Lemma~\ref{lem:pot}, and write
\begin{equation}
  \label{eq:phiDef}
  \varphi(n)=c_0\ln n.
\end{equation}
In order to emphasize the dependence of the BBMRE on the starting point,
we write $N_x = (N_x(t))_{t\ge 0}$ for the BBMRE started from $x$, that
is for the process whose distribution is $\ttP^\xi_x$.

The content of the next proposition is the coupling alluded to above. Its
statement is slightly more general than needed to show
Theorem~\ref{thm:FKPPunBdd}, since we construct couplings for many
different starting points. This additional control will be useful in the
proof of Theorem~\ref{thm:non-monotone}. Recall that the (possibly small
  but) positive parameter $\delta_1$ is fixed below \eqref{eq:tlb}.

\begin{proposition}
  \label{pro:coupling}
  For every $\varepsilon >0$ there exists
  $\Cl{const_coupling}=\Cr{const_coupling}(\varepsilon)\in (0,\infty)$
  such that for all $n$ large enough,
  $l \in [x_n-5\delta_1 \varphi (n), x_n-4\delta_1 \varphi (n)]$, and
  $r \in [x_n+\delta_1 \varphi (n), x_n+2\delta_1 \varphi (n)],$ there
  exists a coupling $\ttQ_{l,r}^\xi $ of the BBMREs $N_l$ and $N_r$ such
  that
  \begin{equation}
    \label{eq:success}
    \ttQ^\xi_{l,r} (N_l(t) \subset N_r(t) \,\forall t \ge \Cr{const_coupling} \ln n ) \ge
    1-\varepsilon .
  \end{equation}
\end{proposition}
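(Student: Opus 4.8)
The plan is to build the coupling $\ttQ_{l,r}^\xi$ in three stages, relying on the self-propagating nature of inclusion of particle systems: once, at some time $t_0$, \emph{every} particle of $N_l(t_0)$ has been identified — at the same spatial location — with a distinct particle of $N_r(t_0)$, one simply lets the identified particles move and branch in lockstep while the surplus particles of $N_r$ evolve on their own, and then $N_l(t)\subseteq N_r(t)$ for all $t\ge t_0$. Thus everything reduces to producing such a perfect matching at a deterministic time $t_0=\Cr{const_coupling}\ln n$ on an event of $\ttQ_{l,r}^\xi$-probability at least $1-\varepsilon$. Abbreviate $\varphi=\varphi(n)=c_0\ln n$, and recall from Lemma~\ref{lem:pot} and \eqref{eq:xn_properties} that $\xi\equiv\ei$ on the long block $[x_n-2\varphi,x_n]\ni l$, that $\xi\equiv\es$ on the long block $[x_n+2,x_n+2\varphi-2]\ni r$, that $\xi$ is non-decreasing in between, and that $r-l$ is of order $\delta_1\varphi$, hence small compared with these blocks because $\delta_1$ is small.

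In the first stage I would run $N_l$ and $N_r$ independently up to time $T_1=c\ln n$ with $c$ large (to be fixed in terms of $\varepsilon$), and, using first- and second-moment estimates for branching Brownian motion in the piecewise-constant environment of \eqref{eq:xn_properties} together with Brownian hitting-time bounds (and using, where convenient, that $\xi$ is non-decreasing across the relevant interval), establish a good event $G_1$ with $\ttQ_{l,r}^\xi(G_1)\ge 1-\varepsilon/3$ on which, for all $n$ large: (i) every particle of $N_l(T_1)$ lies in an interval $J_l$ of length $O(\ln n)$ around $l$ and $|N_l(T_1)|\le e^{(\ei+o(1))T_1}$; and (ii) the point process $N_r(T_1)$ has, in every sub-interval of length $1$ of some interval $J_r\supseteq J_l$, at least $M:=e^{(\es-o(1))T_1}$ particles. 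The inclusion $J_r\supseteq J_l$ is the heart of the argument: on the left it is easy, because the leftmost particle of $N_r$ sweeps past $l$ already at time of order $\delta_1\ln n\ll T_1$ (it crosses the $\es$-block at speed $\sqrt{2\es}$ and the adjacent $\ei$-block at speed $\sqrt{2\ei}$) and from then on $N_r$ over-populates that region; on the right one must rule out that the foremost particles of $N_l$ — which, after crossing $x_n$, branch at the high rate $\es$ — overtake the bulk of $N_r$. This is exactly the comparison of two ``max'' quantities the paper isolates around \eqref{eq:ABmax}, and it is the one place where \eqref{eq:relation_es_ei} is used: only the head start $\es/\ei>2$ in branching rate (hence in particle number and in front speed) guarantees that $N_r$'s bulk still engulfs the support of $N_l(T_1)$, and this is what forces the particular choice of the small constants $\delta_1,\delta_2$.

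In the second stage, on $G_1$, I would match the two populations by coalescence. Fix a particle $Y\in N_l(T_1)$ at position $y$. By (ii) and a short-scale density estimate, $N_r(T_1)$ has order $\sqrt M$ particles within distance $1/\sqrt M$ of $y$ that have not yet been used; I would couple $Y$ with these candidates one after the other, each time for a time-interval of length $1/M$, driving each pair by a reflection coupling — a pair at distance $O(1/\sqrt M)$ coalesces within time $1/M$ with probability bounded away from $0$ — so that $Y$ coalesces with some $N_r$-particle within a total extra time of order $1/\sqrt M=o(1)$ and with probability at least $1-e^{-c'\sqrt M}$. On the complementary event the extra time is so short that, with overwhelming probability, no particle of $N_l$ branches during it; a union bound over the at most $e^{O(T_1)}=n^{O(1)}$ particles of $N_l(T_1)$ then yields an event $G_2\subseteq G_1$ with $\ttQ_{l,r}^\xi(G_2)\ge1-\varepsilon$ on which every particle of $N_l(T_1)$ is, by time $T_1+o(1)\le\Cr{const_coupling}\ln n$, identified with a distinct particle of $N_r$ at the same location. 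From then on, identified particles evolve and branch jointly and the surplus $N_r$-particles independently; this produces BBMREs with the prescribed starting points and satisfies \eqref{eq:success}.

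The step I expect to be the real obstacle is the first stage: controlling the spatial support of $N_l(T_1)$ against the bulk of $N_r(T_1)$ — in particular preventing the extreme, large-deviation front particles of $N_l$ (which accelerate once they cross $x_n$ into the $\es$-block) from escaping the region where $N_r$ is exponentially dense — and doing so simultaneously for all admissible $l\in[x_n-5\delta_1\varphi,x_n-4\delta_1\varphi]$ and $r\in[x_n+\delta_1\varphi,x_n+2\delta_1\varphi]$ and $\p$-almost surely in the environment $\xi$. This is precisely where \eqref{eq:relation_es_ei} and a careful tuning of $\delta_1$ and $\delta_2$ are indispensable.
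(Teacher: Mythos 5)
Your overall strategy --- run $N_l$ and $N_r$ independently up to a time $T_1=c\ln n$, produce a perfect co-location matching of $N_l(T_1)$ into $N_r(T_1)$, and then evolve matched pairs in lockstep --- contains a genuine gap at its core, namely in your stage-one claim (ii). You assert that $N_r(T_1)$ has at least $M=e^{(\es-o(1))T_1}$ particles in every unit subinterval of a region $J_r$ containing the full support of $N_l(T_1)$. This is false near the extremes of the range of $N_l(T_1)$. The leftmost particles of $N_l(T_1)$ sit at distance of order $\sqrt{2\ei}\,T_1$ (or more) to the left of $l$, i.e.\ at linear distance in $T_1$ from both starting points. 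For a branching Brownian motion, the expected number of particles in a unit window at distance $vt$ from the start is of order $e^{(\lambda-v^2/2)t}$, which at the front ($v\approx\sqrt{2\lambda}$) is sub-exponential; by the standard extremal-process picture there are only $O(1)$ particles of $N_r(T_1)$ per unit length near the extremal particles of $N_l(T_1)$, not $e^{(\es-o(1))T_1}$. Since the two processes are independent in your first stage, with probability bounded away from $0$ some extremal particle of $N_l(T_1)$ has \emph{no} particle of $N_r(T_1)$ within distance $o(1)$, so your reflection-coupling/coalescence step cannot be completed in additional time $o(1)$ for those particles, and the perfect matching at a single deterministic time cannot be achieved with probability $1-\varepsilon$. (A secondary issue: beyond the controlled stretches of \eqref{eq:xn_properties}, which have length only $2c_0\ln n$ while your $T_1=c\ln n$ with $c$ large, the potential is arbitrary in $[\ei,\es]$, so you cannot even locate the fronts of the two processes relative to each other there.)

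The paper's proof is built precisely to avoid this obstruction. Instead of asking for co-location at one time, it pairs \emph{every} particle of $N_l$ with a particle of $N_r$ from birth via a mirror coupling about the midpoint $m$ of $(l,r)$: the monotonicity of $\xi$ on $[x_n-2c_0\ln n,\,x_n+2c_0\ln n-2]$ guarantees $\xi(Y_t)\le\xi(Y'_t)$ for a mirrored pair, so the $N_r$-partner can carry the extra branching rate $\xi(Y'_t)-\xi(Y_t)$, emitting ``free'' particles. Mirrored pairs are converted to co-located (``coupled'') pairs only when the $N_l$-particle hits $m$ (where the mirror images coincide automatically) or meets a free particle, and the problematic left-extremal particles of $N_l$ are handled not by matching but by a barrier argument: on $\mathcal G_1(t)$ no $N_l$-particle reaches $\lbound=x_n-\varphi(n)$, on $\mathcal G_2(t)$ a free $N_r$-particle does, and an order-preservation argument ($\tau=\infty$) shows that this free particle must have absorbed or overtaken every remaining l-mirrored particle. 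The role of \eqref{eq:relation_es_ei} is then to make $\mathcal G_1\cap\mathcal G_2$ typical via the optimization \eqref{eq:ABmax}, rather than to force an engulfing of supports as in your sketch. I do not see a repair of your construction that does not, in effect, reintroduce a from-birth pairing of this kind for the extremal particles.
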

For an illustration of the coupling and an explanation of the strategy to
show that the event in \eqref{eq:success} occurs with high probability,
we refer to Figure~\ref{fig:coupling}.

Before proving Proposition~\ref{pro:coupling}, let us first show that it
implies Theorem~\ref{thm:FKPPunBdd}.
\begin{proof}[Proof of Theorem~\ref{thm:FKPPunBdd}]
  Using the notation from Proposition~\ref{pro:coupling} we set
  \begin{equation*}
    t_n:=\inf\{t\geq0: w(t,x_n-4\delta_1\varphi(n))=\delta\}.
  \end{equation*}
  Note that $t_n \geq \Cr{const_coupling}\ln n$ for all $n$ large enough
  (using $x_n\geq n$ and the fact that the front moves linearly, see
    Proposition~\ref{prop:lyapunov_tight}).
  By
  \eqref{eq:xn_propertiesGrowth} and \eqref{eq:phiDef} we get
  $\varphi\in\Omega(\ln n)$,  $x_n,t_n\to\infty$,
  $(x_n)_{n\in\N}\in\mathcal{O}(n)$ and it remains to show
  \eqref{eq:unbddFront}.  Let us abbreviate  $l:=x_n-4\delta_1\varphi(n)$
  and $r:=x_n+2\delta_1\varphi(n)$. By definition of the coupling
  $\ttQ^\xi_{l,r}$ and the representation
  $w(t,x)=\ttP_{x}^\xi\big( N^\leq(t,0)\neq\emptyset \big)$ of the
  solution to \eqref{eq:KPP} (see Proposition~\ref{prop:KPPBRM}), we have
  for all $n$ large enough that
  \begin{align*}
    \delta = w(t_n,x_n-4\delta_1\varphi(n))
    &= \ttP_{l}^\xi\big( N^\leq(t_n,0)\neq\emptyset \big)
    = \ttQ^\xi_{l,r} \big( N_l^\leq(t_n,0)\neq\emptyset \big) \\
    &\leq \ttQ^\xi_{l,r} \big( N_l^\leq(t_n,0)\neq\emptyset,
      N_l(t) \subset N_r(t) \,\forall t \ge \Cr{const_coupling} \ln n \big) + \varepsilon \\
    &\leq \ttQ^\xi_{l,r} \big( N_r^\leq(t_n,0)\neq\emptyset \big) + \varepsilon
    = \ttP_{r}^\xi\big( N^\leq(t_n,0)\neq\emptyset \big) + \varepsilon \\
    &=w(t_n,x_n+2\delta_1\varphi(n))+\varepsilon,
  \end{align*}
  where we used  \eqref{eq:success} in the first inequality. Adapting the
  notation to that of the statement, we can conclude.
\end{proof}

\begin{proof}[Proof of Proposition \ref{pro:coupling}]
  To construct the coupling, we endow every particle in $N_l$ and $N_r$
  at every time with a type. The type of the particle does not influence
  its dynamics within $N_l$ or $N_r$, but rather helps to encode the
  dependence between $N_l$ and $N_r$ under $\ttQ^\xi_{l,r}$. At any given
  time, every particle in $N_l$ can have either of the types
  \emph{l-mirrored}, \emph{l-coupled}, or \emph{bad}. Similarly, every
  particle in $N_r$ can have either of the types \emph{r-mirrored},
  \emph{r-coupled}, or \emph{free}. We denote $\LM(t), \LC(t), \Bad(t)$
  and $\RM(t)$, $\RC(t)$ and $\Free(t)$ the sets of particles with those
  respective types at time $t$. A particle is given a type when it is
  created, and its type can change only if it branches, meets another
  particle or hits some special point in space, as we will describe
  later. The assignment of the type is a right-continuous function in
  times, in the sense that if, e.g., a particle $Y$ changes its type
  from l-mirrored to bad at time $t$, then $Y\in \Bad(t)$ and
  $Y\in \LM(t-)$.

  In addition, under the coupling, at every time $t\ge 0$, there are bijections
  $\mu_t:\LM(t) \to \RM(t)$ and $\gamma_t:\LC(t) \to \RC(t)$. The
  bijections $\mu_t $ ``mirror'' the positions of the particles:
  \begin{equation}
    \label{eq:mirroring}
    \text{If $Y\in \LM(t)$ and $Y' = \mu_t(Y)\in \RM(t)$, then
      $m - Y_t = Y'_t - m$,}
  \end{equation}
  where $m$ is the midpoint of the segment $(l,r)$,
  \begin{equation*}
    m := \frac 12 (l+ r) \in [x_n - 2 \delta_1 \varphi (n), x_n - \delta_1\varphi (n)].
  \end{equation*}
  On the other hand, coupled particles are at the same position:
  \begin{equation}
    \label{eq:coupling}
    \text{If $Y \in \LC(t)$ and $Y' = \gamma_t (Y) \in \RC(t)$, then
      $Y_t = Y'_t$.}
  \end{equation}
  As time evolves, the bijections $\mu_t$ and $\gamma_t$  naturally follow
  the particles. That is, for the mirrored particles, if $Y \in  \LM(t) \cap \LM(t')$,
  $Y' \in \RM(t)\cap \RM(t')$ and $Y'= \mu_t(Y)$, then also
  $Y'= \mu_{t'}(Y)$, and similarly for the coupled particles.

  We set
  \begin{equation}
    \label{eq:outer_bounds}
    \lbound := x_n - \varphi(n)  \qquad \text{and}\qquad \rbound := 2m - \lbound .
  \end{equation}
  It will turn out that under the coupling constructed below, the
  l-mirrored particles will always be in the interval $(\lbound,m)$, that
  is $\{Y_t: Y\in \LM(t)\}\subset (\lbound,m)$, see \ref{it:hit_m} and
  \ref{it:hit_L} below. As a consequence of \eqref{eq:mirroring} and
  \eqref{eq:outer_bounds}, we then have
  $\{Y_t: Y\in \RM(t)\}\subset (m,\rbound)$. In particular, in
  combination with \eqref{eq:xn_properties}, we infer that the potential
  is always larger at the position of an r-mirrored particle than at the
  position of the corresponding l-mirrored particle:
  \begin{equation}
    \label{eq:ordering}
    \text{If $Y\in \LM(t)$ and $Y'= \mu_t (Y)$, then
      $\xi (Y_t) \le \xi (Y'_t)$.}
  \end{equation}

  We can now describe the dynamics of $N_l$, $N_r$ and of the types under
  the coupling $\ttQ_{l,r}^\xi$. At time $0$, there is one (l-mirrored)
  particle at position $l$ in $N_l$ and one (r-mirrored) particle at
  position $r$ in $N_r;$ this determines the bijection $\mu_0$ uniquely.
  Every particle in $N_l$ (resp.~$N_r$) performs Brownian motion,
  independently of the other particles in $N_l$ (resp.~$N_r$). The
  corresponding mirrored and coupled particles are required to satisfy
  \eqref{eq:mirroring} and \eqref{eq:coupling} respectively, which is
  possible, since the law of Brownian motion is invariant by reflection;
  besides these two conditions the motion of particles in $N_l$ is
  independent of the motion of particles in $N_r$.

  \begin{figure}[t]
    \centering
    \includegraphics[width=0.82\linewidth]{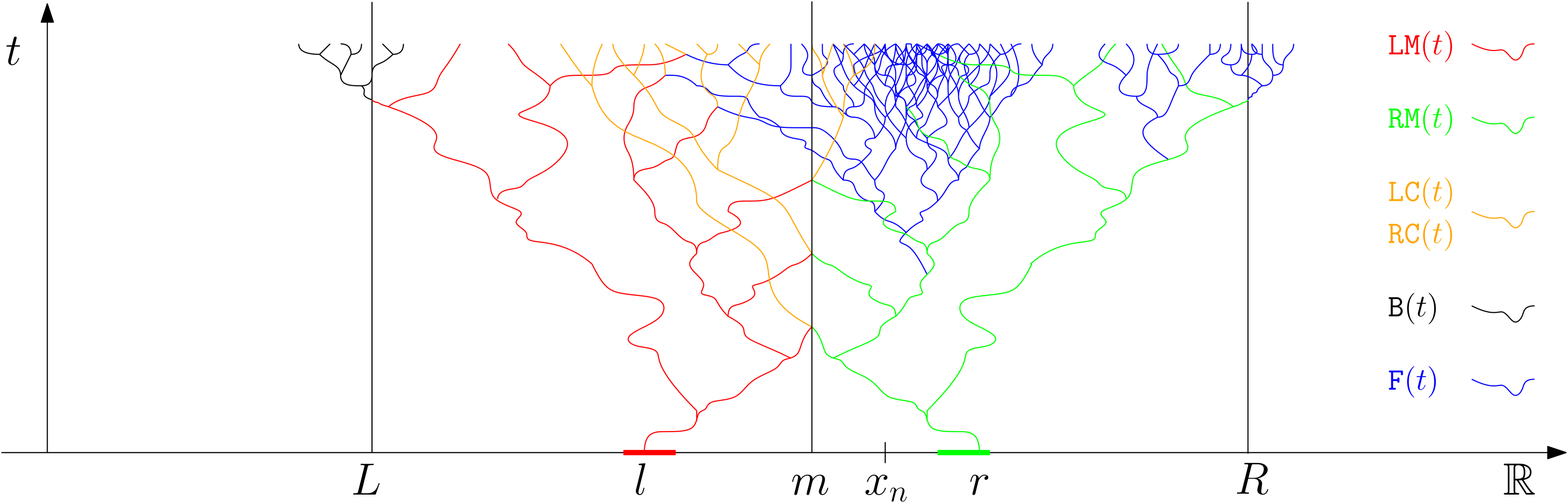}
    \caption{An illustration of the coupling mechanism. l-mirrored
      particles are illustrated in red, r-mirrored particles in green,
      while l- and r-coupled particles are illustrated in orange. Free
      particles are blue and bad particles are black. The fat red (resp.
        green) line on the $\R$-axis denotes the set
      $[x_n-5\delta_1\varphi(n),x_n-4\delta_1\varphi(n)]$ (resp.
        $[x_n+\delta_1\varphi(n),x_n+2\delta_1\varphi(n)]$). Note that
      $x_n$ is nearer to the green domain, forcing a particle $Y\in N_l$
      to go a long way to reach high branching-potential.  The event in
      \eqref{eq:success} occurs, if at time $t=c\ln n$, all l-mirrored
      particles (red) are already turned into l-coupled ones (orange) and
      no l-mirrored particles have crossed $\lbound$ yet. But then there
      will  be no bad particles (black) either, which already implies the
      event in \eqref{eq:success}.}
    \label{fig:coupling}
  \end{figure}

  The branching events occur according to the following rules.
  \begin{enumerate}
    \item At time $t$, every $Y\in N_l$ branches
    with rate $\xi (Y_t)$. It is replaced by $k$ new
    particles,
    with probability $p_k$, independently of remaining randomness. The
    type of the new particles is the same as of $Y$.

    If a particle $Y$ is l-mirrored (resp.~l-coupled), $Y\in \LM(t-)$
    (resp.~$Y\in\LC(t-)$) before time $t,$ then the corresponding r-mirrored
    particle $Y'=\mu_{t-}(Y)$ (resp.~r-coupled particle,
      $Y' = \gamma_{t-}(Y)$)  branches as well. It is replaced by the
    same number $k$ of particles. The newly created particles are set to
    be r-mirrored (resp.~r-coupled) and the bijection $\mu_t$ (resp.
      $\gamma_t$) is a natural extension of $\mu_{t-}$ (resp.~
      $\gamma_{t-}$) to the newly created particles.

    \item At time $t$, every r-mirrored particle $Y'\in \RM(t-)$ (mirrored
      with $Y= \mu_{t-}^{-1}(Y')$)
      branches with rate
    $\xi (Y'_t)-\xi(2m - Y'_t) = \xi (Y'_t) - \xi (Y_t),$
    in addition to the branching occurring in (a). This rate is
    non-negative due to \eqref{eq:mirroring} and \eqref{eq:ordering}.
    It is replaced by $k$ new particles,
    with probability $p_k$, independently of everything else.
    One of the newly created particles, say $Z'$, is set to be
    r-mirrored, and  we set $\mu_{t}(Y) := Z'$. The type of the remaining
    newly created particles is free.

    \item At time $t$, every free particle $Y'\in \Free(t)$ branches with rate
    $\xi (Y'_t)$. It is replaced by $k$ new particles, with
    probability $p_k$, independently of everything else. The type of
    the new particles is free.
  \end{enumerate}

  It can be easily checked that, as a result of the rules (a)--(c), every
  $Y'\in N_r$ branches with rate $\xi (Y'_t)$ at time $t$, as it should.

  Finally, the particles can change their type if one of the following events
  occur:
  \begin{enumerate}[(A)]
    \item \label{it:hit_m} If an l-mirrored particle hits $m$, that is $Y\in \LM(t-)$ and
    $Y_t = m$, then, by consequence of
    \eqref{eq:mirroring}, the corresponding particle $Y'=\mu_{t-}(Y)$
    satisfies $Y'_t= m$ as well. We thus change the types of $Y$ and $Y'$
    to l-coupled and r-coupled, respectively, and define
    $\gamma_{t}(Y) :=  Y'$.

    \item If an l-mirrored particle $Y\in \LM(t-)$ meets a free particle at
    time $t$, that is there is $Z'\in \Free(t-)$ with $Z'_t= Y_t$, then
    we change the types of $Y$ and $Z'$
    to l-coupled and r-coupled, respectively, and define
    with $\gamma_{t}(Y) :=  Z'$. The type of the r-mirrored particle
    $Y'=\mu_{t-}(Y)$ that was
    mirrored with $Y$ is changed to free.

    \item \label{it:hit_L} If an l-mirrored particle hits $\lbound$, that is $Y\in \LM(t-)$ and
    $Y_t=\lbound$, then the type of  $Y$ is changed to bad, and the type of
    the corresponding r-mirrored particle $Y' = \mu_{t-}(Y)$ is changed to
    free.
  \end{enumerate}

  To show that the coupling succeeds, i.e.\ that \eqref{eq:success}
  holds, it is sufficient to show that with probability at least $1-\varepsilon,$
  there are no l-mirrored and bad particles after time $\Cr{const_coupling}\log n$.
  In this vein, we define two good events:
  \begin{equation}
    \mathcal G_1(t) := \{N_l^\le (s,\lbound) = \emptyset \, \forall s\le t \},
  \end{equation}
  i.e.,~on $\mathcal G_1(t)$ no particle from $N_l$ enters $(-\infty,\lbound)$
  before time $t$, and
  \begin{equation}
  \mathcal G_2(t) := \big\{ N_r^{\le}(t,\lbound)\neq \emptyset\big\};
  \end{equation}
  i.e.,~there is a (necessarily free, if $\mathcal G_1(t)$ occurs as
    well) particle to the left of $\lbound$ at time $t$. We now need the
  following lemma which ensures that we can find $t$ such that those
  events are typical.

  \begin{lemma}
    \label{lem:LDestimates}
    For any $\varepsilon >0$ there exists $t'< 1$ such that for all $n$
    large enough, with $t = t' \varphi (n)/\sqrt {2\ei}$,
    \begin{equation}
      \ttQ_{l,r}^\xi \big(\mathcal G_1(t) \cap \mathcal G_2(t) \big) \ge
      1-\varepsilon .
    \end{equation}
  \end{lemma}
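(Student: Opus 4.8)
The plan is to estimate $\ttQ_{l,r}^\xi(\mathcal G_1(t)^c)$ and $\ttQ_{l,r}^\xi(\mathcal G_2(t)^c)$ separately, each by $\varepsilon/2$, and then to check that the resulting constraints on $t'$ can be met simultaneously. Throughout I use that, for $n$ large, $\xi\equiv\ei$ on $[x_n-2\varphi(n),x_n]\ni\lbound$ and $\xi\equiv\es$ on $[x_n+2,x_n+2\varphi(n)-2]\ni r$ (Lemma~\ref{lem:pot}, \eqref{eq:outer_bounds}), that $l-\lbound\in[(1-5\delta_1)\varphi(n),(1-4\delta_1)\varphi(n)]$, and that $r$ already lies in the $\es$-region, with $r-\lbound\in[(1+\delta_1)\varphi(n),(1+2\delta_1)\varphi(n)]$. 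Write $c^\ast:=2\sqrt{\ei(\es-\ei)}/\es$; then $c^\ast<1$, and by \eqref{eq:relation_es_ei} one has $\sqrt{2(\es-\ei)}>\sqrt{2\ei}$.

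For $\mathcal G_1$: since particles never die, $\mathcal G_1(t)^c$ is, up to a $\ttQ_{l,r}^\xi$-null set, the event that some genealogy of $N_l$ reaches $\lbound$ before time $t$, so by Markov's inequality together with the many-to-one formula (the path-functional version of the Feynman--Kac identity \eqref{eq:feynman_kac_PAM}),
\[
  \ttQ_{l,r}^\xi\big(\mathcal G_1(t)^c\big)\ \le\ E_l\Big[\exp\Big\{\int_0^{t}\xi(B_s)\,\d s\Big\};\ \min_{s\le t}B_s\le\lbound\Big].
\]
I would split this according to whether $B$ stays in $[x_n-2\varphi(n),x_n]$ throughout $[0,t]$: on that event the weight equals $e^{\ei t}$, and the reflection bound $P_l(\min_{s\le t}B_s\le\lbound)\le e^{-(l-\lbound)^2/(2t)}$ makes the contribution $\le\exp\{\ei t-(l-\lbound)^2/(2t)\}$, which vanishes as soon as $t'<1-5\delta_1$. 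On the complement — $B$ visits the $\es$-region, or the (easier, non-binding) region left of $x_n-2\varphi(n)$ — I bound $\int_0^t\xi\le\ei t+(\es-\ei)\,|\{s\le t:B_s\notin[x_n-2\varphi(n),x_n]\}|$ and decompose a typical path into its first passage from $l$ into the $\es$-region (of displacement $\ge x_n-l\ge 4\delta_1\varphi(n)$), a sojourn there of length $\sigma$, and the final leftward passage down to $\lbound$ (of displacement $\varphi(n)+O(1)$); removing the sojourn time by an exponential tilt, via the explicit Laplace transform of Brownian hitting times \eqref{eq:BS} with parameter $\es-\ei$, and controlling the two passages by the same transform and Gaussian bounds, a maximization over the time split gives a contribution $\le\exp\{\es t-(1+4\delta_1)\varphi(n)\sqrt{2(\es-\ei)}+o(\varphi(n))\}$, which vanishes as soon as $t'<(1+4\delta_1)\,c^\ast$. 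Altogether $\ttQ_{l,r}^\xi(\mathcal G_1(t)^c)\to0$ provided $t'<\min\{1-5\delta_1,(1+4\delta_1)c^\ast\}$.

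For $\mathcal G_2$: here I use that $N_r$ branches at rate $\es$ from the start. Run $N_r$ for a time $s_1$ of order $\varphi(n)$; with probability $\ge1-\varepsilon/3$ it then has at least $K:=e^{(\es-o(1))s_1}$ particles, all within $o(\varphi(n))$ of $r$, and conditionally on $N_r(s_1)$ their subtrees are independent, whence $\ttQ_{l,r}^\xi(\mathcal G_2(t)^c)\le\varepsilon/3+(1-q)^K$ with $q$ a lower bound, uniform over starting points $p$ within $o(\varphi(n))$ of $r$, for $\ttP_p^\xi(N^\le(t-s_1,\lbound)\neq\emptyset)$. By a second-moment (Paley--Zygmund) estimate — whose first moment is the corresponding solution to \eqref{eq:PAM}, cf.\ \eqref{eq:feynman_kac_PAM} — one gets, up to $e^{o(\varphi(n))}$ factors, $q\gtrsim\exp\{\es\sigma+\ei u-(p-\lbound)^2/(2u)\}$ maximized over $\sigma+u\le t-s_1$, i.e.\ a further internal multiplication at rate $\es$ for time $\sigma$ followed by a rate-$\ei$ crossing of the $\ei$-region of duration $u$. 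Incorporating the factor $K$ and maximizing the combined exponent $\es t-(\es-\ei)u-(p-\lbound)^2/(2u)$ in $u$, the unconstrained maximizer is $u^\ast=(p-\lbound)/\sqrt{2(\es-\ei)}$; this is exactly where \eqref{eq:relation_es_ei} enters, since $\sqrt{2(\es-\ei)}>\sqrt{2\ei}$ forces $u^\ast\approx\varphi(n)/\sqrt{2(\es-\ei)}<\varphi(n)/\sqrt{2\ei}\le t$, so the maximizer is admissible (were it not, one would be pushed to $u=t$, which would require $t'>1$). The optimal value gives $qK\ge\exp\{\es t-(p-\lbound)\sqrt{2(\es-\ei)}+o(\varphi(n))\}\to\infty$ (using $p-\lbound\le(1+2\delta_1)\varphi(n)+o(\varphi(n))$), so $(1-q)^K\to0$, provided $t'>(1+2\delta_1)c^\ast$. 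Hence $\ttQ_{l,r}^\xi(\mathcal G_2(t)^c)\le\varepsilon/2$ for $n$ large.

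Combining, any $t'\in\big((1+2\delta_1)c^\ast,\ \min\{1-5\delta_1,(1+4\delta_1)c^\ast\}\big)$ does the job, and this interval is non-empty once $\delta_1$ — and $\delta_2$, which I would use to absorb the $o(\varphi(n))$ losses when the two maximizations above are made rigorous — is taken small, precisely because $c^\ast<1$, because $(1+2\delta_1)c^\ast<(1+4\delta_1)c^\ast$, and because \eqref{eq:relation_es_ei} keeps the optimal crossing time within the time budget. With $t=t'\varphi(n)/\sqrt{2\ei}$ this yields $\ttQ_{l,r}^\xi(\mathcal G_1(t)\cap\mathcal G_2(t))\ge1-\varepsilon$ for all $n$ large, as required. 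I expect the main obstacle to be the pair of matched optimizations behind the $\mathcal G_1$ upper bound and the $\mathcal G_2$ lower bound — both governed by the same trade-off between time spent in $\{\xi=\es\}$ and time spent crossing $\{\xi=\ei\}$ — and, in particular, verifying that the admissible window for $t'$ is non-empty, which hinges on $N_l$ having to travel an extra $\approx 4\delta_1\varphi(n)$ merely to reach the $\es$-region and on the strict inequality \eqref{eq:relation_es_ei}.
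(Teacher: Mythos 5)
Your proposal is correct and follows essentially the same route as the paper: a first‑moment (Markov) bound on the number of $N_l$‑genealogies hitting $\lbound$, split according to whether the path enters the $\es$‑region, for $\mathcal G_1$; and for $\mathcal G_2$ a two‑phase argument producing $e^{(\es-o(1))s}$ particles near $r$ (the paper's Claim~\ref{cl:many_part}, with the loss absorbed into $\delta_2$) followed by independent crossing attempts, whose single‑particle success probability the paper bounds via the Chauvin--Rouault LDP lower bound for rate‑$\ei$ BBM rather than your Paley--Zygmund sketch — but the combined exponent $\es(t-u)+\ei u-(p-\lbound)^2/(2u)$ and the resulting window $t'\in\big((1+2\delta_1)c^\ast,\min\{1-5\delta_1,(1+4\delta_1)c^\ast\}\big)$ are identical to the paper's \eqref{eq:negsup}--\eqref{eq:tlb}. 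One small slip: the chain $u^\ast<\varphi(n)/\sqrt{2\ei}\le t$ is false since $t<\varphi(n)/\sqrt{2\ei}$; the correct admissibility condition is $t'>(1+2\delta_1)\sqrt{\ei/(\es-\ei)}$, which is indeed implied by $t'>(1+2\delta_1)c^\ast$ precisely because $(\es-\ei)/\ei>1$, as your parenthetical remark correctly indicates.
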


  We postpone the proof of this lemma and complete the proof of
  Proposition~\ref{pro:coupling} first. Let $t$ be as in
  Lemma~\ref{lem:LDestimates}. We claim that
  \begin{equation}
    \label{eq:subset}
    \{N_l(t) \subset N_r(t) \} \supset \mathcal G_1(t) \cap \mathcal
    G_2(t).
  \end{equation}
  If we show this, then the claim of Proposition~\ref{pro:coupling}
  follows with  $\Cr{const_coupling}  = t/\ln n = t'c_0 /\sqrt {2 \ei}$.

  To prove \eqref{eq:subset}, recall first that bad particles can only be created
  if an l-mirrored particle hits $\lbound$. As a consequence,
  \begin{equation} \label{eq:noBadG1}
  \text{on $\mathcal G_1(t)$ there cannot be any bad particles at time $t$.}
  \end{equation}
  Next, we show that
  \begin{equation} \label{eq:nolMirG1G2}
  \text{on
  $\mathcal G_1(t) \cap \mathcal G_2(t)$ there are no l-mirrored
  particles at time $t$}
  \end{equation}
  either. To this end define
  $\rmin(t) = \inf\{Y'_t: Y\in \Free(t)\}$ to be the position of the
  leftmost free particle, and $\lmax(t) = \sup\{Y_t:Y\in \LM(t)\} $ to be
  the position of the rightmost l-mirrored particle, with the convention
    $\inf \emptyset = +\infty$, $\sup \emptyset = - \infty;$ in the remaining
    cases, a.s., the infimum and supremum are attained, since $\Free(t)$
    and $\LM(t)$ are a.s.~finite sets). Let
  $$
  \tau   := \inf\{t\ge 0:  \lmax(t) > \rmin(t)\}.
  $$
  We claim that $\tau  = \infty$, $\ttQ_{l,r}^\xi $-a.s. Indeed, we first
  note that $\lmax $ and $\rmin$ are right-continuous. In addition, the
  only jumps that $\lmax $ has are downward jumps. They occur a.s.~iff
  the rightmost l-mirrored particle changes its type due to (A)--(C). (If
    one of (A)--(C) occurs, then a.s.~there is only one l-mirrored
    particle at position $\lmax(t)$. At branching events, $\lmax$ is
    unchanged, as l-mirrored particles are created only at positions
    where l-mirrored particles are already present, see (a)). Similarly,
  with the exception of the first jump from $+\infty$, the only jumps
  that the function $\rmin$ has are upwards jumps, occurring a.s.~iff the
  leftmost free particle becomes r-coupled due to (B). Therefore, it
  follows that a.s. $\tau \ge \inf\{t\ge 0:\lmax(t) =\rmin(t)\}$.
  However, the event $\{\exists t \in [0,\infty):\lmax(t) =\rmin(t)\}$
  cannot occur by the construction of the coupling, since if an
  l-mirrored and a free particle meet, then at this instant they become
  l-/r-coupled immediately. Hence, $\tau =\infty$ almost surely, as claimed.

  Assume now that $\mathcal G_1(t) \cap \mathcal G_2(t)$ occurs. At time
  $t$, there is thus a particle from $N_r$ and no particle from $N_l$ to
  the left of $\lbound$. From the construction, this particle is neither
  r-coupled (since on $\mathcal G_1(t)$ there is no corresponding
    l-coupled particle there), nor r-mirrored (as all r-mirrored
    particles are always in $(m,\rbound)$). Therefore, it must be free
  and thus $\rmin(t)<\lbound$. Since $\tau =\infty$ a.s.,
  $\lmax(t)<\lbound$ as well. However, by construction, l-mirrored
  particles are always located in $(\lbound,m)$, and thus
  $\lmax(t) < \lbound$ implies $\lmax(t) = -\infty$, that is
  $\LM(t) = \emptyset,$ establishing \eqref{eq:nolMirG1G2}.

  All in all, from the above it follows that on
  $\mathcal G_1(t)\cap\mathcal  G_2(t),$ \eqref{eq:noBadG1} as well as
  \eqref{eq:nolMirG1G2} hold true, i.e.,  there do not exist any
  l-mirrored or bad particles at time $t$. Hence, all particles in
  $N_l(t)$ are necessarily l-coupled, which proves \eqref{eq:subset}.
  This completes the proof of Proposition~\ref{pro:coupling}.
\end{proof}

It remains to show Lemma~\ref{lem:LDestimates}.

\begin{proof}[Proof of Lemma~\ref{lem:LDestimates}]
  We first estimate the probability of $\mathcal G_1(t)$ as a function of
  $ t \in [0, \varphi (n)/\sqrt {2 \ei}] $. To this end we write $\mathcal N(t)$ for the
  number of particles from $N_l$ that hit $\lbound$ before $t;$ here, we only count
    the first hit of $\lbound$ by any particle. That is, we disregard possible
    successive hits of $\lbound$ by the same particle, and also the fact that this
    particle could branch between the hitting of $\lbound$ and the time $t$, and
    thus produce more particles at time $t$ that hit $\lbound$. The expectation
  of $\mathcal N(t)$ can be written as
  \begin{equation}
    \ttE_l^{\xi}[\mathcal N(t)]
    = E_l \Big[ e^{\int_{0}^{H_{\lbound}} \xi (X_s) \d s}; H_{\lbound} < t \Big]
    \le
    E_l \Big[ e^{\int_{0}^{H_{\lbound}} \tilde \xi (X_s) \d s}; H_{\lbound} < t \Big],
  \end{equation}
  where the potential $\tilde\xi $ is given by $\tilde \xi (x) = \es$ if
  $x\ge x_n$, and $\tilde \xi (x) = \ei$ if $x< x_n$. To estimate the
  right-hand side, note that there are two possible scenarios for a
  particle to hit $\lbound$. Either, it stays all the time in the interval
  $(\lbound,x_n)$ where the potential equals $\ei$ and hits $\lbound$ (i.e., it
  displaces by altogether at least $l-\lbound \ge (1- 5\delta_1) \varphi (n)$). Or, it
  spends some $s$ units of time in the interval $[x_n,\infty)$, where the
  potential is $\es$, but then it should displace by at least
  $(x_n-l)+ (x_n - \lbound) \geq  (1+4\delta_1) \varphi (n)$ in $t-s$ units of
  time. Ignoring prefactors which are sub-exponential in
  $\varphi (n)$ and using standard Gaussian tail bounds, we thus arrive at the
  following upper bound:
  \begin{equation}
    \begin{split}
      \label{eq:Gone}
      \ttE_l^{\xi}[\mathcal N(t)] &\lesssim
      \exp\Big\{ t \ei  - \frac {(1-5\delta_1)^2 \varphi (n)^2}{2t}  \Big\}
      + \sup_{s\le t}
      \exp\Big\{ (t-s) \ei + s \es  - \frac {(1+4\delta_1)^2 \varphi
          (n)^2}{2(t-s)}  \Big\}
      \\ &=
      \exp\Big\{ \sigma (n)
        \Big(t' - \frac{(1-5 \delta_1)^2}{t'}\Big)\Big\}
      +
      \sup_{s'<t'}
      \exp\Big\{  \sigma(n)
        \Big(t' + s'\,\frac{\es -\ei}{\ei}
          -\frac{(1+4 \delta_1)^2}{t'-s'}\Big)\Big\},
    \end{split}
  \end{equation}
  where we introduced
  \begin{equation}
    \label{eqn:defsigma}
    \sigma (n) = \varphi (n)\sqrt {\frac \ei 2} \qquad \text{and} \qquad
    t' = \frac{t\, \ei}{  \sigma (n)}
  \end{equation}
  in order to put the various terms on the same scale. Using Markov's
  inequality, to show that $\ttP_l^\xi(\mathcal G_1(t)^c)\to 0,$ it is
  sufficient to show that both summands on the right-hand side of
  \eqref{eq:Gone} tend to $0.$ For this to be the case for the first one,
  it is sufficient to require
  \begin{equation}
    \label{eq:tub}
    t'<(1-5\delta_1).
  \end{equation}
  Before dealing with the second term (which we will do below
    \eqref{eq:Gtwo}), we turn our attention to the event $\mathcal G_2(t)$.

  To control the probability of  the event $\mathcal G_2$, we need two claims.

  \begin{claim}
 	\label{cl:many_part}
    For every $\varepsilon >0$ there exists $t_0<\infty$ such that for all $n$ large enough,
    \begin{equation}
    \label{eq:many_part}
      \ttP_r^{\xi}\Big(\big |\big \{Y\in N_r(t): Y_t\in [x_n+\delta_1 \varphi (n),
            x_n + 2 \delta_1 \varphi (n) ] \big \} \big  | \ge
        e^{(1-\delta_2)\es \, t}\Big) \ge 1- \varepsilon/2  ,
      \qquad \text{for  all $t\ge t_0$.}
    \end{equation}
  \end{claim}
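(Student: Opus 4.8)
The plan is to compare the BBMRE $N_r$ with a branching Brownian motion of \emph{constant} branching rate $\es$, exploiting that by Lemma~\ref{lem:pot} the potential satisfies $\xi\equiv\es$ on the interval $J_n:=[x_n+2,\,x_n+2c_0\ln n-2]$. Since $\delta_1<1$ and $\varphi(n)=c_0\ln n$, for all $n$ large $J_n$ contains both the starting point $r$ and the target window $I_n:=[x_n+\delta_1\varphi(n),\,x_n+2\delta_1\varphi(n)]$, and $r$ lies at distance at least $\delta_1\varphi(n)-2$ from the left endpoint of $J_n$ and at least $2(1-\delta_1)\varphi(n)-2$ from its right endpoint, both distances tending to $\infty$. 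Let $\widehat N_r$ be a branching Brownian motion started at $r$ with branching rate $\es$ and offspring law $(p_k)$. As $\xi$ agrees with $\es$ on $J_n$, one can couple $N_r$ and $\widehat N_r$ to coincide up to the first time $\tau_n$ at which a particle exits $J_n$; hence
\begin{equation*}
  \ttP_r^\xi\bigl(\bigl|\{Y\in N_r(t):Y_t\in I_n\}\bigr|\ge e^{(1-\delta_2)\es t}\bigr)
  \ \ge\
  \ttP_r^\es\bigl(\bigl|\{Y\in \widehat N_r(t):Y_t\in I_n\}\bigr|\ge e^{(1-\delta_2)\es t}\bigr)
  -\ttP_r^\es(\tau_n\le t),
\end{equation*}
and it suffices to bound the first term below and the second above.

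For the constant-rate process I would use two classical facts. Since $p_0=0$ and $\sum_k kp_k=2>1$ with $\sum_k k^2p_k=m_2<\infty$, the process never dies, $t\mapsto|\widehat N_r(t)|$ is non-decreasing, and $e^{-\es t}|\widehat N_r(t)|\to W$ almost surely with $\E W=1$, $W>0$ a.s.; therefore $e^{-(1-\delta_2/2)\es t}|\widehat N_r(t)|=e^{(\delta_2/2)\es t}\bigl(e^{-\es t}|\widehat N_r(t)|\bigr)\to\infty$, so there is a finite random time $T$ — with a law independent of $r$ by translation invariance — with $|\widehat N_r(t)|\ge e^{(1-\delta_2/2)\es t}$ for all $t\ge T$, and one fixes $t_0$ so that $\ttP_r^\es(T\le t_0)\ge 1-\varepsilon/6$. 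Second, by many-to-one $\E[|\{Y\in\widehat N_r(t):Y_t\in A\}|]=e^{\es t}P_r(B_t\in A)$, and a second-moment estimate — equivalently, a Biggins-type law of large numbers for the empirical measure of the particle positions — shows that with probability $\ge 1-\varepsilon/6$, for all $t\ge t_0$ the proportion of particles of $\widehat N_r$ lying in $I_n$ at time $t$ is at least $\tfrac12\min\bigl(1,|I_n|/\sqrt t\,\bigr)$. As $r\in I_n$ and $|I_n|=\delta_1c_0\ln n\to\infty$, in the regime $t=O(\ln n)$ relevant for the application — $t=t'\varphi(n)/\sqrt{2\ei}$ in Lemma~\ref{lem:LDestimates} — one has $|I_n|/\sqrt t\to\infty$, so this proportion is bounded away from $0$; combined with the growth bound this yields $|\{Y\in\widehat N_r(t):Y_t\in I_n\}|\ge e^{(1-\delta_2)\es t}$ on the intersection of the two good events.

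It remains to bound $\ttP_r^\es(\tau_n\le t)$, which by many-to-one is at most $e^{\es t}P_r(\tau<t)$ with $\tau$ the exit time of a single Brownian motion from $J_n$; since $r$ is at distance $\ge\delta_1\varphi(n)-2$ from $\partial J_n$, the Gaussian tail bound gives $\ttP_r^\es(\tau_n\le t)\le 2e^{\es t-(\delta_1\varphi(n)-2)^2/2t}$, which is $\le\varepsilon/6$ for $n$ large as soon as $t$ is a sufficiently small multiple of $\varphi(n)$ — e.g.\ for $t=t'\varphi(n)/\sqrt{2\ei}$ with $t'<\delta_1\sqrt{\ei/\es}$, compatible with $t'<1$ precisely because $\es/\ei>2$. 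Combining the three estimates yields \eqref{eq:many_part}. For the statement with no upper bound on $t$ one instead keeps working with the $J_n$-confined process: it is a supercritical branching process driven by Brownian motion killed at $\partial J_n$, with net growth rate $\es-\pi^2/(2|J_n|^2)\to\es$, surviving with probability $\to1$ as $|J_n|\to\infty$, and with rescaled empirical measure concentrating on the principal Dirichlet eigenfunction of $\tfrac12\tfrac{\d^2}{\d x^2}$ on $J_n$; since $I_n$ is a fixed proportion of $J_n$ away from its endpoints and $\es-\pi^2/(2|J_n|^2)-(1-\delta_2)\es\ge(\delta_2/2)\es$ for $n$ large, the exponential gain swallows all sub-exponential prefactors.

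The step I expect to be the main obstacle is the spatial-spread estimate — showing, uniformly in $t\ge t_0$ and in the position of $r$ inside $I_n$, that a non-negligible fraction of the exponentially many particles lands in $I_n$ — which requires a genuine second-moment computation for the number of BBM particles in a slowly growing interval (or an appeal to a branching law of large numbers). By comparison, the growth bound $|\widehat N_r(t)|\ge e^{(1-\delta_2/2)\es t}$ and the confinement bound on $\tau_n$ are routine.
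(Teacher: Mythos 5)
Your starting idea --- compare $N_r$ with a homogeneous rate-$\es$ BBM on the stretch $[x_n+2,x_n+2c_0\ln n-2]$ where $\xi\equiv\es$ --- is also the engine of the paper's proof, but there are two genuine gaps. The first is the one you flag yourself, and it is fatal as written: you need the event that a non-negligible fraction of the particles lies in $[x_n+\delta_1\varphi(n),x_n+2\delta_1\varphi(n)]$ to have probability at least $1-\varepsilon/6$, uniformly in $t\ge t_0$ and in $n$. A second-moment (Paley--Zygmund) computation only yields a probability bounded below by a \emph{constant}, and a Biggins/Watanabe-type law of large numbers for the empirical measure is an almost-sure statement valid after a random time whose law a priori depends on $n$ through the target window. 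The paper closes exactly this gap by amplification plus independence: first, in a time $t'$ bounded uniformly in $n$, it produces at least $K=K(\varepsilon)$ particles in $[r-1,r+1]$ with probability $\ge1-\varepsilon/4$ (the estimate \eqref{eq:ampli} quoted from \cite{DreSchmi19}); then it shows that each of these, independently, succeeds with probability at least a fixed $\varepsilon_2>0$ in spawning $\varepsilon_2e^{(1-\delta_2/2)\es kT}$ descendants that stay in a box $[x'-K_2,x'+K_2]$ of size \emph{independent of $n$} and return to $[x'-K_1,x'+K_1]$ at multiples of $T$ --- a supercritical multi-type branching process in the sense of Harris, with $T$ chosen first so that the per-generation confinement loss $\tfrac5{16}$ is absorbed, $\tfrac5{16}e^{\es T}\ge e^{(1-\delta_2/2)\es T}$. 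The overall failure probability is then $\le\varepsilon/4+(1-\varepsilon_2)^K\le\varepsilon/2$. Without some such boosting device, no choice of $t_0$ converts your constant-probability spatial-spread estimate into the required $1-\varepsilon/2$.

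The second gap concerns the range of $t$. Your free-BBM coupling is only useful while $e^{\es t}P_r(\tau<t)$ is small, i.e.\ for $t\lesssim\delta_1\varphi(n)/\sqrt{2\es}$. This covers neither ``all $t\ge t_0$'', as the claim demands, nor even the times at which the claim is actually invoked in the proof of Lemma~\ref{lem:LDestimates}: there it is applied at $s=s'\varphi(n)/\sqrt{2\ei}$ with $s'$ near the maximizer $t'-\sqrt{B_2/A}$, which is bounded below by $\sqrt{B_2/A}\,(A-1)/(A+1)$, a constant determined by $\es/\ei$ and not by $\delta_1$; for $\delta_1$ small this lies outside your admissible range $s'<\delta_1\sqrt{\ei/\es}$. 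Your fallback for larger $t$ --- the branching process confined to the full $\es$-stretch and its principal Dirichlet eigenfunction --- is morally the right object, but the convergence results you would invoke are asymptotic in $t$ for a fixed domain and give success only on a positive-probability event, so they re-import both the uniformity-in-$n$ problem and the probability-boosting problem. Restructuring around fixed-size confinement boxes plus the amplification step, as in the paper, resolves both issues at once.
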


  In order not to hinder the flow of reading, we postpone the proof
  of Claim~\ref{cl:many_part} to the end of the proof of
  Lemma~\ref{lem:LDestimates}.

  \begin{claim}
    Let $t=t'\varphi(n)/\sqrt{2 \ei}$ with $t'<1$ and $\eta >0$. Then for
    every $y\in[x_n+\delta_1 \varphi (n), x_n + 2 \delta_1 \varphi (n) ]$
    and all $n$ large enough
    \begin{equation}
      \ttP_y^\xi(N^\le(t,\lbound) \neq \emptyset)
      \ge \exp \Big\{\sigma (n)\Big(t'
          - \frac{(1+2\delta_1)^2}{t'} - \eta \Big)\Big\}.
    \end{equation}
  \end{claim}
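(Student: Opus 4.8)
The plan is to discard all the fine structure of the potential, keep only the bound $\xi\ge\ei$ from \eqref{eq:POT}, and reduce the statement to a classical large–deviation lower bound for the maximal displacement of a \emph{homogeneous} branching Brownian motion.

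\emph{Step 1: domination by a rate-$\ei$ BBM.} Couple the BBMRE started at $y$ with a branching Brownian motion of constant branching rate $\ei$ so that the latter sits inside the former: every particle branches at rate $\ei$ (the ``BBM'' branchings) and, in addition, at rate $\xi(\cdot)-\ei\ge 0$ (the ``extra'' branchings), one offspring of each extra branching carrying on the identity of the BBM particle. Then $N^{\le}(t,\lbound)$ of the BBMRE contains $N^{\le}(t,\lbound)$ of the rate-$\ei$ BBM, so $\ttP_y^\xi(N^{\le}(t,\lbound)\neq\emptyset)\ge \ttP_y^{\ei}(N^{\le}(t,\lbound)\neq\emptyset)$, where $\ttP^{\ei}$ denotes the homogeneous BBM. (Equivalently: $v(t,x):=\ttP_x^\xi(N^{\le}(t,\lbound)\neq\emptyset)$ solves $v_t=\tfrac12 v_{xx}+\xi F(v)$ by Proposition~\ref{prop:KPPBRM}; since $F\ge 0$ on $[0,1]$, as $\sum_k p_k(1-v)^k\le(1-v)$, and $\xi\ge\ei$, it is a supersolution of $\widetilde v_t=\tfrac12\widetilde v_{xx}+\ei F(\widetilde v)$ with the same initial datum, whence $v\ge\widetilde v$ by the parabolic comparison principle, and $\widetilde v(t,x)=\ttP_x^{\ei}(N^{\le}(t,\lbound)\neq\emptyset)$ by the homogeneous McKean representation.) By translation invariance and the reflection symmetry of homogeneous BBM (cf.\ Remark~\ref{rem:KPPBRM usage}), and using $y-\lbound=\varphi(n)+(y-x_n)\le(1+2\delta_1)\varphi(n)$ together with monotonicity,
\[
  \ttP_y^{\ei}\big(N^{\le}(t,\lbound)\neq\emptyset\big)\ =\ \ttP_0^{\ei}\big(\max_{Y\in N(t)}Y_t\ge y-\lbound\big)\ \ge\ \ttP_0^{\ei}\big(\max_{Y\in N(t)}Y_t\ge (1+2\delta_1)\varphi(n)\big).
\]

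\emph{Step 2: the homogeneous large–deviation bound.} Put $v_\star:=(1+2\delta_1)\varphi(n)/t=(1+2\delta_1)\sqrt{2\ei}/t'$; this is a \emph{constant}, strictly larger than $\sqrt{2\ei}$ because $t'<1<1+2\delta_1$. Hence $(1+2\delta_1)\varphi(n)=v_\star t$ lies strictly above the typical front position $\sqrt{2\ei}\,t$ of a rate-$\ei$ BBM, and a standard lower bound for the probability that the BBM maximum exceeds its front — obtained by a first/second moment argument on the particles whose genealogy stays in a thin tube about the line of slope $v_\star$, or from known results on branching Brownian motion — gives, for every fixed $v>\sqrt{2\ei}$ and all $t$ large,
\[
  \ttP_0^{\ei}\big(\max_{Y\in N(t)}Y_t\ge vt\big)\ \ge\ \mathrm{poly}(t)^{-1}\exp\big\{-\big(\tfrac{v^2}{2}-\ei\big)t\big\}.
\]
With $v=v_\star$ and $t=t'\varphi(n)/\sqrt{2\ei}$ one computes $\big(\tfrac{v_\star^2}{2}-\ei\big)t=\ei\big(\tfrac{(1+2\delta_1)^2}{t'^2}-1\big)\tfrac{t'\varphi(n)}{\sqrt{2\ei}}=\big(\tfrac{(1+2\delta_1)^2}{t'}-t'\big)\sigma(n)$, and therefore
\[
  \ttP_y^\xi\big(N^{\le}(t,\lbound)\neq\emptyset\big)\ \ge\ \mathrm{poly}(t)^{-1}\exp\Big\{\sigma(n)\Big(t'-\frac{(1+2\delta_1)^2}{t'}\Big)\Big\}.
\]
Since $t$ is of order $\varphi(n)$ and $\sigma(n)=\varphi(n)\sqrt{\ei/2}\to\infty$, the polynomial prefactor exceeds $e^{-\eta\sigma(n)}$ once $n$ is large, which is exactly the asserted inequality (uniformly in $y$ in the given window, since only $0<y-\lbound\le(1+2\delta_1)\varphi(n)$ was used).

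\emph{Where the work is.} The only non-routine ingredient is the homogeneous maximal–displacement lower bound of Step 2, i.e.\ the second–moment estimate controlling the correlation of two particles that both reach the far-away level $v_\star t$; the tube restriction is precisely what makes that pair term comparable to the first moment in this super-critical speed regime. Note that neither the detailed geometry of $\xi$ from Lemma~\ref{lem:pot}, nor the smallness of $\delta_1$, nor the hypothesis $\es/\ei>2$ is used in this claim in isolation — these enter only later, when the bound is combined (together with Claim~\ref{cl:many_part}) to show that $\mathcal G_2(t)$ is typical.
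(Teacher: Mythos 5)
Your proposal is correct and follows essentially the same route as the paper: reduce to a homogeneous rate-$\ei$ BBM by monotonicity in the branching rate and in the starting point, then invoke the large-deviation lower bound for the maximal displacement at supercritical speed $v_\star=(1+2\delta_1)\sqrt{2\ei}/t'>\sqrt{2\ei}$ and rewrite the exponent in terms of $\sigma(n)$. The only cosmetic difference is that the paper simply cites the Chauvin--Rouault bound $\ttP_0^\ei(N^\le(t,-vt)\neq\emptyset)\ge\exp\{t(\ei-v^2/2-\eta)\}$, whereas you carry a polynomial prefactor and absorb it into $e^{-\eta\sigma(n)}$, which is equally valid.
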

  \begin{proof}
    Obviously
    $\ttP_y^\xi(N^\le(t,\lbound) \neq \emptyset) \ge \ttP_y^\ei(N^\le(t,\lbound) \neq \emptyset)
    \ge \ttP_{x_n + 2 \delta_1 \varphi (n)}^\ei(N^\le(t,\lbound) \neq \emptyset)$.
    Moreover, by the large deviation lower bound from \cite[Thm.\ 1]{CR88}, for
    every $v>\sqrt{2\ei}$ and $\eta >0$, if $t$ is sufficiently large,
    then
    \begin{equation*}
      \ttP_0^\ei(N^\le(t,-vt) \neq \emptyset)
      \ge \exp\{t(\ei - v^2/2 - \eta )\}.
    \end{equation*}
    Using this estimate with
    $v=(x_n+2\delta_1 \varphi (n)-\lbound)/t = (1+2\delta_1) \varphi (n) / t = (1+2\delta_1) \sqrt {2\ei}/t'> \sqrt{2\ei}$,
    and by rewriting it using the notation introduced in
    \eqref{eqn:defsigma}, the claim follows.
  \end{proof}
  Using these two claims, we have that for any $0< s'<t'<1$ as well as for
  $t = t' \varphi (n)/\sqrt{2 \ei}$ and $s = s'\varphi(n)/\sqrt{2\ei},$ that
  \begin{equation*}
    \begin{split}
      \ttP_r^\xi (\mathcal G_2(t)^c) &\le
        \ttP_r^{\xi}\Big(|\{Y\in N_r(s): Y_s\in [x_n+\delta_1 \varphi (n),
              x_n + 2 \delta_1 \varphi (n) ]\} \le e^{(1-\delta_2)\es \, s}\Big)
        \\&\quad +\ttP_r^\xi\Big(\mathcal G_2(t)^c\, \big|\, |\{Y\in N_r(s): Y_s\in [x_n+\delta_1 \varphi (n),
              x_n + 2 \delta_1 \varphi (n) ]\}| \ge e^{(1-\delta_2) \es \, s}\Big)
        \\&\le \frac \varepsilon 2
        + \Big(1-\exp\Big\{\sigma (n) \Big(t'-s' - \frac{(1+2\delta_1)^2}{t'-s'} -
              \eta\Big)\Big\}\Big)^{\exp\{(1-\delta_2)\es \, s\}}.
      \end{split}
  \end{equation*}
  The second summand on the right-hand side converges to $0$ as
  $n\to \infty$ if
  \begin{equation}
    \begin{split}
      \label{eq:Gtwo}
      &\exp\Big\{\sigma (n) \Big(t'-s' - \frac{(1+2\delta_1)^2}{t'-s'} -
          \eta\Big)\Big\} \cdot
      \exp\{(1-\delta_2)\es \, s\}
      \\&=
      \exp\Big\{\sigma (n) \Big(t'+s'\frac{\es(1-\delta_2)-\ei}\ei
          - \frac{(1+2\delta_1)^2}{t'-s'} - \eta\Big)\Big\}
      \to \infty.
    \end{split}
  \end{equation}

  The factors in the exponents of \eqref{eq:Gone} and \eqref{eq:Gtwo} are
  both of the form $t' + A s' - B/(t'-s')$ such that (for $\delta_2 > 0$
    small) $A>0$ and $B>1$. For $A$, $B$ and $t'$, fixed, this function
  is maximized for $s\in [0,t']$ by
  \begin{equation}
    s' =
    \begin{cases}
      t' - \sqrt{B/A},&\\
      0,&
    \end{cases}
    \text{ with a maximum value of }
    \begin{cases}
      (1+A)t'-2\sqrt{AB},\quad&\text{if $t'> \sqrt{B/A}$,}\\
      t' - B/t',&\text{otherwise.}
    \end{cases}
    \label{eq:ABmax}
  \end{equation}
  Ignoring for a moment the constants $\delta_2$ and $\eta $, we write
  $A=(\es - \ei)/\ei$, $B_1 = (1+4 \delta_1)^2$, and $B_2 = (1+2\delta_1)^2$.
  Observe that $A>1$ by \eqref{eq:relation_es_ei}. In order to satisfy
  \eqref{eq:Gtwo} and let \eqref{eq:Gone} tend to 0, we must fix $t'$
  and $\delta_1$ so that \eqref{eq:tub} holds, and at the same time
  \begin{gather}
    \label{eq:negsup}
    \sup_{0<s'<t'} t' + s' A - B_1 /(t'-s') <0,\\
    \label{eq:possup}
    \sup_{0<s'<t'} t' + s' A - B_2 /(t'-s') >0.
  \end{gather}
  Since $B_2>1$ and $t'<1$, the analysis in \eqref{eq:ABmax} implies that
  the supremum in \eqref{eq:possup} can be positive only if
  \begin{equation}
    t'>\max\bigg(\sqrt{\frac{B_2}A},\frac{2\sqrt{AB_2}}{1+A}\bigg) =
    \frac{2 \sqrt {AB_2}}{1+A},
    \label{eq:tlb}
  \end{equation}
  where to obtain the equality we used the fact that $A>1$. We thus fix
  $\delta_1>0$ small enough so that $1-5\delta_1 > 2 \sqrt{AB_2}/(1+A)$
  and \eqref{eq:tub} as well as \eqref{eq:tlb} can be both satisfied;
  this is possible only if $A>1$ which is true by assumption. We then fix
  $t'$ satisfying \eqref{eq:tub} and \eqref{eq:tlb}, so that the supremum
  in \eqref{eq:possup} is positive (this is by construction), but small
  enough, so that the supremum in \eqref{eq:negsup} is negative; this is
  possible since $B_1>B_2$. Finally, we fix $\delta_2>0$, $\eta>0 $ so
  that the validity of the established inequalities is not modified. With
  this choice of constants, \eqref{eq:Gtwo} holds and the right-hand side
  of \eqref{eq:Gone} tends to $0$, as required. Hence, for
  $t = t' \varphi (n)/\sqrt{2\ei}$ we have
  $\ttQ_{r,l}^\xi  (\mathcal G_1(t)^c \cup \mathcal G_2(t)^c)
  \le \ttP_{l}^\xi  (\mathcal G_1(t)^c) + \ttP_{r}^\xi  ( \mathcal G_2(t)^c) \to 0$
  as $n\to \infty$. This completes the proof.
\end{proof}

It remains to prove Claim~\ref{cl:many_part}.

\begin{proof}[Proof of Claim~\ref{cl:many_part}]
  The proof follows by a comparison with branching processes split into two phases. For
  the first phase we recall that by
  Lemma~\cite[Lemma~\ref{LD-le:amplification}]{DreSchmi19} there exist
  $\kappa>1$ and $t_1<\infty$ such that, $\mathbb P$-a.s.,
  \begin{equation}
    \label{eq:ampli}
    \sup_{x\in \mathbb R} \mathtt P^\xi_x \big (|\{Y\in N(t): Y_t \in
        [x-1,x+1]\}|\le \kappa^t \big) \le \kappa^{-t}\qquad
    \text{for all } t\ge t_1.
  \end{equation}
  For the second phase we need few preparatory steps. We fix $T>0$ such that
  \begin{equation}
    \label{eq:Tfix}
    e^{(1-\frac{\delta_2}2)\es\, T} \le \frac 14 e^{\es\, T}
    \quad\text{and}\quad
    P_0(B_T>1)\ge \frac 7{16}.
  \end{equation}
  We further fix $K_1>1$ large enough so that
  \begin{equation}
    \label{eq:K1fix}
    \inf_{x\in [-K_1-1,K_1+1]} P_x\big(B_T\in [-K_1,K_1]\big) \ge \frac
    38,
  \end{equation}
  which is possible due to the second part of \eqref{eq:Tfix}.
  Finally, we fix $K_2>K_1$ large enough so that
  \begin{equation}
    \label{eq:K2fix}
    \sup_{x\in [-K_1-1,K_1+1]} P_x\big( B_s\notin [-K_2,K_2] \, \text{ some } s\in
      [0,T]\big) \le \frac
    1{16},
  \end{equation}
  so \eqref{eq:K1fix} in combination with \eqref{eq:K2fix} entail that
  \begin{equation}
    \label{eq:Kfix}
    \inf_{x\in [-K_1-1,K_1+1]} P_x\big(B_T\in [-K_1,K_1], B_s \in
      [-K_2,K_2]\,\forall s\le T\big)  \ge
    \frac 5{16}.
  \end{equation}
  Next, assume that $n$ is large enough, so that
  $\delta_1 \varphi (n) > K_2/2$, and in particular $\xi$ equals $\es$ on
  $[x_n+\delta_1 \varphi (n)-K_2, x_n+ \delta_1 \varphi (n)+K_2]$. For $x\in [x_n+\delta_1 \varphi (n)-1, x_n+ \delta_1 \varphi (n)+1]$,
  define
  \begin{equation}
    x' = \begin{cases}
      x_n + \delta_1\varphi (n) +K_1, \qquad
      &\text{if $x<x_n + \delta_1\varphi (n) +K_1$,}\\
      x_n + 2\delta_1\varphi (n) -K_1, \qquad
      &\text{if $x>x_n + 2\delta_1\varphi (n) -K_1$,}\\
      x,
      &\text{otherwise,}
    \end{cases}
  \end{equation}
  and set $I_i = [x'-K_i,x'-K_i]$, $i=1,2,$ so that $I_1 \subset I_2$.

  We now consider the BBMRE
  started at $x$ and for $k\ge 1$ we define
  \begin{equation}
    Z_k = | \{Y\in N(kT): Y_{lT} \in I_1\, \forall 1\le l\le K, Y_{s}\in
      I_2\, \forall s<kT\}|.
  \end{equation}
  $Z_k$ can be interpreted as the number of particles in the $k$-th generation  of a
  multi-type branching process; here, the type corresponds to the
  position of the particle in $I_1$ at which it is born (with exception of the initial
    particle which is at most at distance 1 from $I_1$), and where the
  number of offspring of a particle of type $y$ is distributed as
  $|\{Y\in N(T): Y_T\in I_1, Y_s\in I_2\, \forall s\le T\}|$ under
  $\mathtt P^\es_y$. In particular, using the Feynman-Kac formula as well as
  \eqref{eq:Kfix} and then \eqref{eq:Tfix}, the expected  offspring number
  of a particle of type $y$
  satisfies
  \begin{equation}
    \begin{split}
      \mathtt E^\es_y&[|\{Y\in N(T): Y_T\in I_1, Y_s\in I_2\, \forall s\le
          T\}|]
      \\&= e^{\es\, T} P_y(B_T\in I_1, B_s \in I_2 \,\forall s<T) \ge
      \frac 5 {16} e^{\es\, T} \ge e^{(1- \frac {\delta_2}2) \es \, T},
    \end{split}
  \end{equation}
  uniformly over all admissible types $y$. In addition,
  the second moment of the same quantity is finite, again uniformly
  over all admissible types, by comparison with branching process with
  branching rate $\es$. It thus follows by the standard
  results on multi-type branching processes  that for some
  $\rho \ge e^{(1- \frac {\delta_2}2) \es \, T}$ finite,
  $Z_k/\rho^k$ converges in distribution to a non-negative random variable
  $W$ with $P(W>0)>0$ (see e.g.~\cite[Theorem~14.1]{Harris63}, where $\rho $ is the principal
    eigenvalue of the expectation operator of the multi-type branching
    process; observe also that Condition 10.1 of this theorem is easily
    checked for $V$ being the Lebesgue measure). In particular, one can find $\varepsilon_2>0$ and
  $k_0$ large such
  that
  \begin{equation}
    \mathtt P_x^{\es}\big(Z_k \ge \varepsilon_2 e^{(1- \frac {\delta_2}2) \es
        \,k T} \big)
    \ge
    \mathtt P_x^{\es}(Z_k \ge \varepsilon_2 \rho^k )
    \ge \varepsilon_2
    \qquad \text{for all }k\ge k_0,
    \label{eq:phase2}
  \end{equation}
  uniformly in
  $x\in [x_n+\delta_1 \varphi (n)-1, x_n+ \delta_1 \varphi (n)+1]$. This terminates the investigation of the second phase of comparison with BRW, and we may now proceed to the proof of Claim~\ref{cl:many_part}.

  To this end, fix
  $K$ such that $(1-\varepsilon_2)^K < \varepsilon /4$ and set (for
    $\kappa $ and $t_1$ from \eqref{eq:ampli})
  \begin{equation}
    \label{eq:tprimefix}
      t' = \inf\{s\in [t_1,t], \kappa^s > K \vee (4/\varepsilon), t-s = k T \text{ for some }k\in
      \mathbb N \}.
  \end{equation}
  Observe that there is $c<\infty$ such that $t'<c $ for all $t\ge c$.
  Setting $\mathcal N = \{Y\in N(t'):Y_{t'}\in [r-1,r+1]\}$, we have,
  using \eqref{eq:ampli} and \eqref{eq:tprimefix} for the last inequality, that
  \begin{equation}
    \begin{split}
      \label{eq:claimaa}
      \ttP_r^{\xi}&\Big(\big |\big \{Y\in N_r(t): Y_{t}\in [x_n+\delta_1 \varphi (n),
            x_n + 2 \delta_1 \varphi (n) ] \big \} \big  |
        \le e^{(1-\delta_2)\es \, t}\Big)
      \\&\le
      \ttP^\xi_r(|\mathcal N|< \kappa^{t'}) +
      \ttP^\xi_r\big( \{|\mathcal N|\ge \kappa^{t'}\} \cap \mathcal A \big)
      \le
      \frac \varepsilon 4 +
      \ttP^\xi_r\big (\{|\mathcal N|\ge \kappa^{t'}\} \cap \mathcal A \big),
    \end{split}
  \end{equation}
  where $\mathcal A$ denotes the event that each particle in $\mathcal N$
  produces less than
  $e^{(1-\delta_2)\es\,t}$ particles in
  $[x_n+\delta_1 \varphi (n), x_n +2 \delta_1 \varphi (n)]$ at time $t$.
  For a particle at position $x\in [r-1,r+1]$, we then fix the intervals
  $I_1$, $I_2$ as above, and observe that the number of its children in
  $[x_n+\delta_1 \varphi (n), x_n +2 \delta_1 \varphi (n)]$ at time
  $t-t' =: k_t T$   dominates $Z_{k_t}$ under $\ttP^\es_x$. Since the
  offspring of different particles are independent, for $t$ large enough
  such that
  $e^{(1-\delta_2)\es\,t} \le \varepsilon_2 e^{(1-\frac {\delta_2}2)\es\,k_t T}$,
  we obtain
  \begin{equation}
    \begin{split}
      \ttP^\xi_r\big( \{|\mathcal N|\ge \kappa^{t'}\} \cap \mathcal A \big)
      &\le
      \ttE^\xi_r\Big[
      \prod_{Y\in \mathcal N} \ttP_{Y_{t'}} \big(Z_{k_t} \le
        e^{(1-\delta_2)\es\,t}\big);\, |\mathcal N|\ge \kappa^{t'}\Big]
      \\&\le
      \ttE^\xi_r\Big[
        \prod_{Y\in \mathcal N} \ttP_{Y_{t'}}\big( Z_{k_t} \le
          \varepsilon_2 e^{(1-\frac {\delta_2}2)\es\,k_t T}\big)
        ;\, |\mathcal N|\ge \kappa^{t'}\Big]
      \\&\le
      \ttE^\xi_r\Big[
        (1-\varepsilon_2)^{|\mathcal N|}
        ;|\mathcal N|\ge \kappa^{t'}\Big]
      \le (1-\varepsilon_2)^{\kappa^{t'}} \le (1-\varepsilon_2)^K \le
      \frac \varepsilon 4,
    \end{split}
  \end{equation}
  where for the third inequality we used \eqref{eq:phase2} and
  for the last two inequalities we applied \eqref{eq:tprimefix}.
  Combining \eqref{eq:claimaa} with the last display completes the proof
  of the claim.
\end{proof}

\subsection{Non-monotonicity of the solution to randomized F-KPP equation}

In this section we prove Theorem~\ref{thm:non-monotone}.
Its proof is based on the simple idea that if there are two
adjacent long stretches, the left one with potential $\ei$ and the right
one with $\es$, where the values of $w$ are comparable at some time $t_n$,
as proved in Theorem~\ref{thm:FKPPunBdd}, then at some later time $t_n+s$
the function $w$ must be non-monotone, since it grows faster on the
right stretch.
\newcommand{\Lipc}{H}
\newcommand{\expFK}{c}

\begin{proof} [Proof of Theorem~\ref{thm:non-monotone}]
  For every $\varepsilon>0$ we choose  $K=K(\varepsilon)$ such that
  \begin{equation}
    \label{eq:choice_K}
    f(K):= e^{\es} P_0\Big( \sup_{0\leq u\leq 1} |B_u|>K \Big) \leq \varepsilon.
  \end{equation}
  Recall that by Proposition~\ref{pro:coupling}, the definition of the
  coupling $\ttQ_{l,r}^\xi$ and the representation
  $w(t,x)=\ttP_{x}^\xi\big( N^\leq(t,0)\neq\emptyset \big)$ of the
  solution to \eqref{eq:KPP} (see Proposition~\ref{prop:KPPBRM}), for
  $\delta\in(0,1)$ there exist $l_n,r_n,t_n$ such that $t_n\to\infty$,
  $w(t_n,l_n)=\delta$, $r_n-l_n\tend{n}{\infty}\infty$ and such that for
  all $n$ large enough
  \begin{equation}
    \label{eq:mon_incr}
    \sup_{l\in[l_n-K,l_n+K]} w(t_n,l) \leq  \inf_{r\in[r_n-K,r_n+K]} w(t_n,r)+\varepsilon
  \end{equation}
  holds. We will prove the result by contradiction and therefore assume
  for the time being that the claim of the theorem does not hold. Then,
  for all $\varepsilon>0$, all $n$ large enough and all $s\in[0,1],$ we have
  \begin{equation}
    \label{eq:mon_decr1}
    \inf_{l\in[l_n-K,l_n+K]} w(t_n+s,l) \geq \sup_{r\in[r_n-K,r_n+K]} w(t_n+s,r)-\varepsilon.
  \end{equation}
  Let us choose $\varepsilon\in(0,\delta)$, $s'\in(0,1]$ small enough
  and $b\in(0,1)$ such that for all $s\in[0,s'],$
  \begin{align}
    \label{eq:choice_s'}
    e^{\es\, s}(\delta + 3\varepsilon) \leq b.
  \end{align}
  Recall that the solution can be represented by the Feynman-Kac formula \eqref{eq:feynman_kac_KPP} with some $F:[0,1]\to[0,1]$ fulfilling \eqref{eq:standard_cond} for some sequence $(p_k)$ fulfilling \eqref{eq:mom_offspr}.
  Let us abbreviate  $\expFK(w):=\frac{F(w)}{w}$, $w\in(0,1]$. It is easy to see that $\expFK$ is strictly decreasing, can be extended continuously to $w=0$, i.e.  $\expFK(0)=\lim_{w\downarrow0}c(w)=\sup_{w\in(0,1]}\expFK(w)=1$, $\expFK(1)=0$ and the function $\expFK:[0,1]\to[0,1]$ is Lipschitz continuous  with Lipschitz constant $\Lipc\in(0,\infty)$. Among others, due to \eqref{eq:mon_decr1} and  $w\in[0,1]$, for all $s\in[0,1]$   we have
  \begin{equation}
  	\label{eq:mon_decr}
  	\sup_{l\in[l_n-K,l_n+K]} \expFK\big(w(t_n+s,l)\big) \leq \inf_{r\in[r_n-K,r_n+K]} \expFK\big(w(t_n+s,r)\big)+\Lipc \varepsilon.
  \end{equation}
  Furthermore, by the Feynman-Kac formula \eqref{eq:feynman_kac_KPP} and the
  Markov property, for all $s\geq 0$ we have
  \begin{align*}
    w(t_n+s,l_n) &= E_{l_n} \Big[ \exp\Big\{ \int_0^{s}\xi(B_u)\expFK\big(w(t_n+s-u,B_u)\big)\diff u \Big\} w(t_n,B_{s}) \Big].
  \end{align*}
  Then due to $\xi\leq\es$, $w\in[0,1]$, $\expFK\leq1$, \eqref{eq:mon_incr},
  \eqref{eq:mon_decr1}, \eqref{eq:choice_K}, and \eqref{eq:choice_s'},
  for all   $n$ large enough we have for all  $s\in[0,s']$ that
  \begin{align}
    \label{eq:w_ngbh}
    w(t_n+s,l_n)&\leq  e^{\es \, s} \Big( P_{l_n}\big(\sup_{0\leq u\leq 1}|B_u-l_n|>K\big) + \sup_{l\in[l_n-K,l_n+K]} w(t_n,l) \Big) \leq b.
  \end{align}
  Furthermore, using $\xi\leq\es$, $w\in[0,1]$ and $\expFK(w)\in[0,1]$ for $w\in[0,1]$ we get that for all
  $s\in[0,1]$ we have
  \begin{align*}
    w(t_n+s,l_n) &\leq E_{l_n} \Big[ \exp\Big\{ \int_0^{s}\xi(B_u)\expFK\big(w(t_n+s-u,B_u)\big)\diff u \Big\} w(t_n,B_{s}); \sup_{0\leq u\leq 1} |B_u-l_n| \leq K \Big] \\
    &\qquad + e^\es P_0\Big( \sup_{0\leq u\leq 1} |B_u|>K \Big).
  \end{align*}
  To bound the first summand, we recall (by definition of $l_n,r_n$) that
  $\xi(l)=\ei$ for all $l\in[l_n-K,l_n+K]$ and $\xi(r)=\es$ for all
  $r\in[r_n-K,r_n+K]$. Using \eqref{eq:mon_incr} and \eqref{eq:mon_decr},
  we see that the first summand can be bounded from above by
  \begin{align*}
    &E_{l_n} \Big[ \exp\Big\{ \frac{\ei}{\es} \int_0^{s}\xi(B_u-l_n+r_n)\big(\expFK(w(t_n+s-u,B_u-l_n+r_n))+\Lipc\varepsilon\big)\diff u \Big\}\\
      &\qquad\qquad  \times \big(w(t_n,B_{s}-l_n+r_n)+\varepsilon \big); \sup_{0\leq u\leq 1} |B_u-l_n| \leq K \Big] \\
    &= e^{\ei \Lipc \, \varepsilon s} E_{r_n}  \Big[ \exp\Big\{ \frac{\ei}{\es} \int_0^{s}\xi(B_u)\expFK(w(t_n+s-u,B_u))\diff u \Big\} (w(t_n,B_{s})+\varepsilon); \sup_{0\leq u\leq 1} |B_u-r_n| \leq K \Big].
  \end{align*}
  Recall the inequality $e^{ax}\leq e^{x}-(1-a)x$ for all $a\in[0,1]$ and
  $x\geq0$. Then, since $\frac{\ei}{\es}\in(0,1)$, we get
  \begin{align} \label{eq:intermediate}
    \begin{split}
      w&(t_n+s,l_n) \leq f(K)   + e^{\ei \Lipc \, \varepsilon s}\left( \varepsilon e^{\ei s}+E_{r_n} \Big[ \exp\Big\{ \int_0^{s}\xi(B_u)\expFK(w(t_n+s-u,B_u))\diff u \Big\} w(t_n,B_{s}) \Big] \right. \\
        &\quad \left.- (1-\ei/\es) E_{r_n} \Big[  \int_0^{s}\xi(B_u)\expFK(w(t_n+s-u,B_u))\diff u \  w(t_n,B_{s}); \sup_{0\leq u\leq 1} |B_u-r_n|\leq K \Big]\right).
    \end{split}
  \end{align}
  Recalling \eqref{eq:mon_incr}, we also have
  $\inf_{r\in[r_n-K,r_n+K]} w(t_n,r) \geq \delta-\varepsilon$. Furthermore, using the properties of $\expFK$, for $\varepsilon$ small enough such that $\varepsilon+b<1$, we have that $\underline{\expFK}=\underline{\expFK}(\varepsilon,b):=\inf_{v\in[0,b+\varepsilon]}\expFK(v)>0$.  Using
  \eqref{eq:choice_K}, $\xi\geq\ei$, \eqref{eq:mon_decr1},
  \eqref{eq:w_ngbh}, the inequality $e^{x}\leq 1+2x$ for $x\geq 0$ small
  enough, and $w\in[0,1]$, we get, choosing $s=s'$ from
  \eqref{eq:choice_s'} and continuing the bound from
  \eqref{eq:intermediate},
  \begin{align*}
    w(t_n+s',l_n)&\leq \varepsilon(1+e^{(1+\Lipc\varepsilon)\ei s'}) + (1+2\Lipc\varepsilon\ei s') w(t_n+s',r_n) -  (1-\ei/\es)\, \ei\, \underline{\expFK}\,(\delta-\varepsilon)(1-\varepsilon) s' \\
    &\leq w(t_n+s',r_n) +  \varepsilon(1+2\ei (1+2\Lipc\varepsilon)) - (1-\ei/\es)\, \ei\, \underline{\expFK}\,(\delta-\varepsilon)(1-\varepsilon) s'
  \end{align*}
  and the right-hand side can made  smaller than
  $w(t_n+s',r_n)-2\varepsilon$ if we choose $s'$ (say) of order
  $\sqrt{\varepsilon}$ and  $\varepsilon$ small enough. But this is a
  contradiction to \eqref{eq:mon_decr1}, which hence proves
  Theorem~\ref{thm:non-monotone}.
\end{proof}

\appendix

\section{Appendix: Further auxiliary results}
We collect here a couple of results needed primarily for the proof of
Lemma \ref{le:space_perturb}, and start with several lemmas concerning  the logarithmic moment generating
functions defined in \eqref{eq:def_L_quant} as well as related objects.
They are proved in \cite{DreSchmi19} and are
modifications of the corresponding discrete-space
statements proved in \cite{CeDr-15}.

\begin{lemma}[{\cite[Lemma~\ref{LD-le:log mom fct and derivatives}]{DreSchmi19}}]
  \label{le:quenched_lmgf}
  We recall that $P_x^{\zeta,\eta}$ has been defined  in \eqref{eq:def_Px}.
  \begin{enumerate}
    \item The functions $L$, $L_x^\zeta$, and $\overline{L}_x^\zeta$, for
    $x\in\R$, defined in \eqref{eq:def_L_quant}, are infinitely differentiable on
    $(-\infty,0)$. Furthermore, for all $\eta<0$ we have
    \begin{align}
      \big( L_x^\zeta \big)'(\eta)             & = \frac{E_x\Big[
          e^{\int_0^{H_{\lceil x\rceil-1}}(\zeta(B_r)+\eta)\diff r}H_{\lceil x\rceil-1}
          \Big]}{E_x\Big[ e^{\int_0^{H_{\lceil x\rceil-1}}(\zeta(B_r)+\eta)\diff r}
          \Big]} = E_x^{\zeta,\eta}[ \tau_{\lceil x\rceil-1} ],\quad
      x\in\R,\label{eq:L_x'_tight}
      \\
      \big( \overline{L}_x^\zeta \big)'(\eta)  & =
      \frac{1}{x}E_x^{\zeta,\eta}\big[ H_0 \big],\quad x>0,\label{eq:Lbar_x'_tight}
      \\
      L'(\eta)                                 & =\E\Big[ \frac{E_1\big[
            e^{\int_0^{H_0}(\zeta(B_r)+\eta)\diff r}H_0 \big]}{E_1\big[
            e^{\int_0^{H_0}(\zeta(B_r)+\eta)\diff r} \big]} \Big] = \E\big[
        E_1^{\zeta,\eta}[ H_0 ] \big],\label{eq:L'_tight}
    \end{align}
    and 
    \begin{align}
      \big( L_x^\zeta \big)''(\eta)            
      & =E_x^{\zeta,\eta}\big[
        \tau_{\lceil x\rceil-1}^2 \big] - \big( E_x^{\zeta,\eta}[ \tau_{\lceil
            x\rceil-1} ]\big)^2 =\text{\emph{Var}}_x^{\zeta,\eta}(\tau_{\lceil x\rceil-1})
      >0 ,\quad x\in\R,\label{eq:L_x''_tight}
      \\
      \big( \overline{L}_x^\zeta \big)''(\eta) &
      =\frac{1}{x}\text{\emph{Var}}_x^{\zeta,\eta}(H_0),\quad
      x>0,\label{eq:Lbar_x''_tight}
      \\
      L''(\eta)                                
      & =\E\Big[
        E_1^{\zeta,\eta} [ H_0^2 ] - \big( E_1^{\zeta,\eta} [ H_0] \big)^2 \Big] =
      \E\big[ \text{\emph{Var}}_1^{\zeta,\eta}( H_0 ) \big] >0.\label{eq:L''_tight}
    \end{align}

    \item For each compact interval $\triangle\subset(-\infty,0)$, there exists
    a constant $\Cl{constDerLMG}=\Cr{constDerLMG}(\triangle)>1$, such that the
    following inequalities hold $\p$-a.s.:
    \begin{align*}
      -\Cr{constDerLMG} \leq \inf_{\eta\in\triangle,x\geq 1} \big\{
        L_{\lfloor x\rfloor}^\zeta(\eta),\overline{L}_x^\zeta(\eta),L(\eta)  \big\}
      & \leq \sup_{\eta\in\triangle,x\geq 1}\big\{ L_{\lfloor
          x\rfloor}^\zeta(\eta),\overline{L}_x^\zeta(\eta),L(\eta) \big\} \leq
      -\Cr{constDerLMG}^{-1},                                                     \\
      \Cr{constDerLMG}^{-1}\leq \inf_{\eta\in\triangle,x\geq 1} \big\{
        (L_{\lfloor x\rfloor}^\zeta)'(\eta),(\overline{L}_x^\zeta)'(\eta),L'(\eta)
        \big\} & \leq \sup_{\eta\in\triangle,x\geq 1}  \big\{ (L_{\lfloor
            x\rfloor}^\zeta)'(\eta),(\overline{L}_x^\zeta)'(\eta),L'(\eta) \big\}  \leq
      \Cr{constDerLMG},                                                 \\
      \Cr{constDerLMG}^{-1}\leq \inf_{\eta\in\triangle,x\geq 1} \big\{
        (L_{\lfloor x\rfloor}^\zeta)''(\eta),(\overline{L}_x^\zeta)''(\eta),L''(\eta)
        \big\} & \leq \sup_{\eta\in\triangle,x\geq 1}  \big\{ (L_{\lfloor
            x\rfloor}^\zeta)''(\eta),(\overline{L}_x^\zeta)''(\eta),L''(\eta) \big\}  \leq
      \Cr{constDerLMG}.
    \end{align*}
  \end{enumerate}
\end{lemma}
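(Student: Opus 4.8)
The plan is to derive all statements from the elementary Laplace-transform identity \eqref{eq:BS} for Brownian hitting times, together with the pathwise bound $\ei-\es\le\zeta\le 0$ coming from \eqref{eq:POT}. Fix a compact interval $\triangle=[\eta_-,\eta_+]\subset(-\infty,0)$; since differentiability is local and the bounds in (b) are stated over such intervals, working on $\triangle$ loses nothing. For $x\in\R$ put $y:=\lceil x\rceil-1$, $d:=x-y\in(0,1]$ and $\Phi_x(\eta):=E_x\big[\exp\{\int_0^{H_y}(\zeta(B_s)+\eta)\,\d s\}\big]$, so that $L_x^\zeta(\eta)=\ln\Phi_x(\eta)$, and define $\overline\Phi_x$ with $H_0$ in place of $H_y$, so that $\overline L_x^\zeta(\eta)=\tfrac1x\ln\overline\Phi_x(\eta)$ for $x>0$.

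First I would show that $\Phi_x$ (and $\overline\Phi_x$) is $C^\infty$ on $(-\infty,0)$ with $\Phi_x^{(k)}(\eta)=E_x\big[H_y^k\exp\{\int_0^{H_y}(\zeta(B_s)+\eta)\,\d s\}\big]$ for all $k\ge 0$. Since $\zeta\le 0$ and $\eta\le\eta_+<0$, the integrand is pathwise dominated by $H_y^k e^{\eta_+ H_y}$, and $E_x[H_y^k e^{\eta_+ H_y}]<\infty$ because by \eqref{eq:BS} the function $c\mapsto E_x[e^{-cH_y}]=e^{-\sqrt{2c}\,d}$ is smooth on $(0,\infty)$, whence $E_x[H_y^k e^{-cH_y}]=(-1)^k(\d/\d c)^k e^{-\sqrt{2c}\,d}<\infty$ for every $c>0$; bounding difference quotients by such functions through the mean value theorem and invoking dominated convergence justifies differentiating under $E_x$ repeatedly. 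Since moreover $\zeta\ge\ei-\es$ gives $\Phi_x(\eta)\ge E_x[e^{(\ei-\es+\eta)H_y}]=e^{-\sqrt{2(\es-\ei-\eta)}\,d}>0$ while trivially $\Phi_x(\eta)\le 1$, the composition $L_x^\zeta=\ln\Phi_x$, and likewise $\overline L_x^\zeta$, is $C^\infty$ on $(-\infty,0)$. Specializing these bounds on $\Phi_x$ and the finiteness estimates on $\Phi_x^{(k)}$ to the integer point $x=n\in\N$ (where $d=1$) shows that $\Phi_n(\eta)$, $\Phi_n'(\eta)$, $\Phi_n''(\eta)$ are pinched between two positive constants depending only on $\triangle$, $\ei$, $\es$; hence there is $C=C(\triangle,\ei,\es)>1$ with $L_n^\zeta(\eta)\in[-C,-C^{-1}]$, $(L_n^\zeta)'(\eta)\in[C^{-1},C]$ and $(L_n^\zeta)''(\eta)\le C$ for all $n\in\N$ and $\eta\in\triangle$.

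Next I would read off the derivative formulas. By the quotient rule $(L_x^\zeta)'(\eta)=\Phi_x'(\eta)/\Phi_x(\eta)$, which is the expectation of $H_{\lceil x\rceil-1}$ under $P_x^{\zeta,\eta}$ — for $x>0$ this follows by applying the strong Markov property at $H_y$ in \eqref{eq:def_Px}, the residual weight $\exp\{\int_{H_y}^{H_0}(\zeta+\eta)\}$ contributing a deterministic factor that cancels in numerator and denominator, and for general $x$ it is the definition of the correspondingly tilted law. Differentiating once more, $(L_x^\zeta)''(\eta)=E_x^{\zeta,\eta}[H_y^2]-\big(E_x^{\zeta,\eta}[H_y]\big)^2=\Var_x^{\zeta,\eta}(H_{\lceil x\rceil-1})$, which is strictly positive since $H_{\lceil x\rceil-1}$ is a.s.\ positive and non-deterministic under $P_x^{\zeta,\eta}$, a law equivalent to $P_x$. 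The identities for $\overline L_x^\zeta$ are obtained verbatim with $H_0$ in place of $H_{\lceil x\rceil-1}$ and an extra factor $1/x$. For $L=\E\big[L_1^\zeta(\cdot)\big]$, the $n=1$ case of the bounds from the previous paragraph makes $L_1^\zeta$ and its first two $\eta$-derivatives bounded on $\triangle$ uniformly in the environment (recall $\ei-\es\le\zeta\le 0$ holds $\p$-a.s.), so dominated convergence in the environment integral lets one differentiate under $\E$: $L\in C^\infty((-\infty,0))$ with $L'(\eta)=\E[E_1^{\zeta,\eta}[H_0]]$ and $L''(\eta)=\E[\Var_1^{\zeta,\eta}(H_0)]>0$, the last inequality strict because its integrand is $\p$-a.s.\ positive.

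The only genuinely non-automatic point — and the main obstacle — is the uniform \emph{lower} bound $(L_n^\zeta)''(\eta)=\Var_n^{\zeta,\eta}(H_{n-1})\ge C^{-1}$ required in (b). To get it I would fix $0<a<b$ (say $a=1$, $b=2$) and, writing $P_n^{\zeta,\eta}(A)=\Phi_n(\eta)^{-1}E_n\big[\exp\{\int_0^{H_{n-1}}(\zeta+\eta)\};A\big]$ with $\Phi_n(\eta)\le 1$ and $\exp\{\int_0^{H_{n-1}}(\zeta+\eta)\}\ge\exp\{(\ei-\es+\eta_-)H_{n-1}\}$, deduce $P_n^{\zeta,\eta}(H_{n-1}\le a)\ge e^{(\ei-\es+\eta_-)a}\,P_0(H_1\le a)$ and $P_n^{\zeta,\eta}(H_{n-1}\in[b,b+1])\ge e^{(\ei-\es+\eta_-)(b+1)}\,P_0(H_1\in[b,b+1])$, both bounded below by a positive constant independent of $n$, $\eta$, $\zeta$ (translation invariance reduces $P_n$ to $P_0$, and the hitting distance equals $1$); a standard anti-concentration estimate then gives $\Var_n^{\zeta,\eta}(H_{n-1})\ge\tfrac14(b-a)^2 p$ with $p$ the smaller of the two probabilities. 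Finally, the bounds for $\overline L_x^\zeta$ with $x\ge 1$ follow from the decomposition $x\overline L_x^\zeta(\eta)=L_x^\zeta(\eta)+\sum_{i=1}^{\lceil x\rceil-1}L_i^\zeta(\eta)$ of \eqref{eqn:sumnotationa}: being a finite sum it may be differentiated term by term, exhibiting $x(\overline L_x^\zeta)^{(k)}(\eta)$ as a sum of $\lceil x\rceil$ terms each controlled as above (the fractional term $L_x^\zeta$, with hitting distance $d\le 1$, being estimated in the same manner), so that $1\le\lceil x\rceil/x\le 2$ for $x\ge 1$ yields the stated two-sided bounds for $(\overline L_x^\zeta)^{(k)}$; the bounds for $L$ then follow by taking $\E$ in those for $L_1^\zeta$.
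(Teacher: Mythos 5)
Your argument is essentially correct, but note that this paper does not prove the lemma at all: it is imported verbatim from the companion paper \cite{DreSchmi19} (and is there a modification of discrete-space statements from \cite{CeDr-15}), so there is no in-text proof to compare against. Taken on its own terms, your proposal is a sound self-contained derivation, and its skeleton --- differentiation under the integral via the domination $H^k e^{\eta_+ H}$, identification of derivatives as moments/variances of passage times under the tilted laws, the pathwise sandwich $\ei-\es+\eta\le\zeta+\eta\le\eta$ combined with the explicit Laplace transform \eqref{eq:BS} for the two-sided bounds, and the additivity $x\overline L_x^\zeta=\sum_i L_i^\zeta$ over unit blocks --- is exactly the standard route for such statements. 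You also correctly isolate the one non-automatic point, the uniform lower bound on $(L_n^\zeta)''=\Var_n^{\zeta,\eta}(H_{n-1})$, and your anti-concentration argument (uniform positive mass of $H_{n-1}$ under $P_n^{\zeta,\eta}$ both in $[0,a]$ and in $[b,b+1]$, hence $\Var\ge\tfrac14(b-a)^2p$) is valid, since the tilting density is bounded below by $e^{(\ei-\es+\eta_-)H_{n-1}}$ and $\Phi_n\le1$.

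Two small bookkeeping points you should make explicit. First, in the decomposition $x\overline L_x^\zeta(\eta)=L_x^\zeta(\eta)+\sum_{i=1}^{\lceil x\rceil-1}L_i^\zeta(\eta)$ the fractional term has hitting distance $d=x-\lceil x\rceil+1$ which may be arbitrarily small, so that term only satisfies \emph{one-sided} bounds (e.g.\ $L_x^\zeta\in[-C,0)$ and $(L_x^\zeta)'\in[0,C]$); the uniform lower bounds for $(\overline L_x^\zeta)^{(k)}$ must therefore be extracted from the at least $\max(1,\lceil x\rceil-1)\ge \lceil x\rceil/2$ integer terms, which still works for $x\ge1$ but is not literally ``each term controlled as above.'' Second, infinite differentiability of $L=\E[L_1^\zeta]$ requires the uniform-in-$\zeta$ bounds for derivatives of \emph{all} orders, not just the first two; this follows from the same domination since $(L_1^\zeta)^{(k)}$ is a universal polynomial in the ratios $\Phi_1^{(j)}/\Phi_1$, but say so. Neither point is a genuine gap.
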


\begin{lemma}[{\cite[Lemma~\ref{LD-le:expectedLogMGfct}]{DreSchmi19}}] \label{lem:vcetc}
  \begin{enumerate}
    \item
    The function $(-\infty,0)\ni\eta\mapsto L(\eta)$ is infinitely
    differentiable and its derivative $L'(\eta)$ is positive and
    monotonically strictly increasing.

    \item
    We have $\p\text{-a.s.}$ that
    \begin{equation}
      \label{eq:LLNEmpLogMG_tight}
      \lim_{x\to\infty} \overline{L}_{x}^\zeta(\eta) =
      L(\eta)\quad\text{for all }\eta< 0.
    \end{equation}

    \item $L'(\eta)\downarrow 0$ as $\eta\downarrow -\infty$

    \item For every $v>v_c:=\frac{1}{L'(0-)}$
    (where $\frac{1}{+\infty}:=0$), which we call \emph{critical
      velocity}, there exists a
    \begin{equation}
      \label{eq:legTraExpectedLogMGfct_1_tight}
      \text{ unique solution $\overline{\eta}(v)<0$ to the equation }
      L'( \overline{\eta}(v) ) = \frac{1}{v}.
    \end{equation}
    $\overline{\eta}(v)$ can be characterized as the unique maximizer to
    $(-\infty,0]\ni \eta\mapsto \frac{\eta}{v} - L(\eta) $, i.e.
    \begin{equation}\label{eq:legTraExpectedLogMGfct_2_tight}
      \sup_{\eta\leq 0} \Big( \frac{\eta}{v} - L(\eta) \Big)=
      \frac{\overline{\eta}(v)}{v} - L\big( \overline{\eta}(v) \big).
    \end{equation}
    The function $(v_c,\infty)\ni v\mapsto \overline{\eta}(v)$ is
    continuously differentiable and strictly decreasing.
  \end{enumerate}
  \label{le:annealed_lgmf}
\end{lemma}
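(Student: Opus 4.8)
The plan is to obtain (a) directly from Lemma~\ref{le:quenched_lmgf}, prove (b) via Birkhoff's ergodic theorem, prove (c) by an explicit estimate on tilted Brownian hitting times, and derive (d) from (a) and (c) by the inverse function theorem. Part (a) is essentially a corollary of Lemma~\ref{le:quenched_lmgf}: its part (a) already states that $L$ is $C^\infty$ on $(-\infty,0)$, its part (b) gives $L'(\eta)>0$ for every $\eta<0$, and \eqref{eq:L''_tight} gives $(L')'(\eta)=L''(\eta)>0$; hence $L'$ is positive and strictly increasing.

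For (b), note first that by stationarity of $\zeta$ the quantity $L_i^\zeta(\eta)$ equals $L_1^\zeta(\eta)$ evaluated at the shifted environment $\theta_{i-1}\omega$, so $(L_i^\zeta(\eta))_{i\ge 1}$ is a stationary sequence, ergodic because the unit shift $\theta_1$ is ergodic (a consequence of \eqref{eq:MIX}); since $L_1^\zeta(\eta)$ is bounded by Lemma~\ref{le:quenched_lmgf}(b), hence integrable, Birkhoff's ergodic theorem gives $\frac1n\sum_{i=1}^n L_i^\zeta(\eta)\to\E[L_1^\zeta(\eta)]=L(\eta)$ $\P$-a.s., for each fixed $\eta<0$. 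By \eqref{eqn:sumnotationa}, $\overline{L}_x^\zeta(\eta)=\frac1x\big(L_x^\zeta(\eta)+\sum_{i=1}^{\lceil x\rceil-1}L_i^\zeta(\eta)\big)$, and since $L_x^\zeta(\eta)$ is bounded uniformly in $x$ (again Lemma~\ref{le:quenched_lmgf}(b)), this converges to $L(\eta)$ as $x\to\infty$. To obtain a single null set valid for all $\eta<0$, fix a countable dense set $D\subset(-\infty,0)$; off the associated null set the convergence holds for every $\eta\in D$, and since $\eta\mapsto\overline{L}_x^\zeta(\eta)$ is non-decreasing (the integrand in \eqref{eq:def_L_quant} increases in $\eta$) while $L$ is continuous by (a), sandwiching $\eta$ between nearby points of $D$ extends the convergence to all $\eta<0$; this is \eqref{eq:LLNEmpLogMG_tight}.

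For (c), since $L'$ is positive and increasing by (a), it suffices to exhibit a deterministic upper bound for $L'(\eta)$ vanishing as $\eta\to-\infty$. Put $K:=\es-\ei$, so $-K\le\zeta\le0$. Using \eqref{eq:L'_tight} together with \eqref{eq:def_Px}, and bounding $\exp\{\int_0^{H_0}\zeta\}\le1$ in the numerator and $\ge\exp\{-KH_0\}$ in the denominator,
\[
L'(\eta)=\E\Big[\frac{E_1\big[H_0\exp\{\int_0^{H_0}(\zeta(B_s)+\eta)\diff s\}\big]}{E_1\big[\exp\{\int_0^{H_0}(\zeta(B_s)+\eta)\diff s\}\big]}\Big]\le\frac{E_1\big[H_0 e^{\eta H_0}\big]}{E_1\big[e^{(\eta-K)H_0}\big]}.
\]
By the Laplace transform of the Brownian hitting time, $E_1[e^{-cH_0}]=e^{-\sqrt{2c}}$ for $c\ge0$ (cf.\ \eqref{eq:BS}), whence $E_1[e^{(\eta-K)H_0}]=e^{-\sqrt{2(K-\eta)}}$, while differentiating this transform in the parameter gives $E_1[H_0 e^{\eta H_0}]=\tfrac{1}{\sqrt{-2\eta}}e^{-\sqrt{-2\eta}}$. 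Therefore
\[
L'(\eta)\le\frac{1}{\sqrt{-2\eta}}\exp\big\{\sqrt{2(K-\eta)}-\sqrt{-2\eta}\big\}\longrightarrow 0\quad\text{as }\eta\to-\infty,
\]
since $\sqrt{2(K-\eta)}-\sqrt{-2\eta}=2K/\big(\sqrt{2(K-\eta)}+\sqrt{-2\eta}\big)\to0$. Combined with the monotonicity from (a), this yields $L'(\eta)\downarrow0$.

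For (d), by (a) and (c) the function $L'$ is a $C^\infty$, strictly increasing bijection from $(-\infty,0)$ onto $(0,L'(0-))$, where $L'(0-):=\lim_{\eta\uparrow0}L'(\eta)\in(0,\infty]$, so $v_c=1/L'(0-)\ge0$ (with $1/\infty:=0$). Hence for every $v>v_c$ we have $1/v\in(0,L'(0-))$, yielding a unique $\overline{\eta}(v)<0$ with $L'(\overline{\eta}(v))=1/v$, i.e.\ \eqref{eq:legTraExpectedLogMGfct_1_tight}. The function $g_v(\eta):=\eta/v-L(\eta)$ satisfies $g_v'(\eta)=1/v-L'(\eta)$ and $g_v''(\eta)=-L''(\eta)<0$, so it is strictly concave; since $g_v'(\overline{\eta}(v))=0$ and $g_v'<0$ on a left neighbourhood of $0$ (because $L'(0-)>1/v$), $\overline{\eta}(v)$ is the unique maximizer of $g_v$ over $(-\infty,0]$, which is \eqref{eq:legTraExpectedLogMGfct_2_tight}. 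Finally $\overline{\eta}=(L')^{-1}(1/\,\cdot\,)$ is continuously differentiable on $(v_c,\infty)$ by the inverse function theorem (using $L''>0$), with $\overline{\eta}'(v)=-1/\big(v^2L''(\overline{\eta}(v))\big)<0$, so it is strictly decreasing. The only steps requiring genuine care are the explicit hitting-time estimate in (c) — turning the $\P$-a.s.\ bounds of Lemma~\ref{le:quenched_lmgf}(b) into an honest, $\omega$-uniform bound that vanishes as $\eta\to-\infty$ — and the standard but slightly delicate passage in (b) from a.s.\ convergence at each fixed $\eta$ to simultaneous convergence over all $\eta<0$.
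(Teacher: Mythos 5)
Your proof is correct, and since the paper only cites \cite{DreSchmi19} for this lemma, the route you take --- reading (a) off Lemma~\ref{le:quenched_lmgf}, Birkhoff's ergodic theorem plus monotonicity in $\eta$ for (b), the explicit Laplace-transform bound $E_1[H_0e^{\eta H_0}]/E_1[e^{(\eta-\es+\ei)H_0}]$ for (c), and the inverse function theorem for (d) --- is exactly the standard argument used there. All the delicate points (the single null set in (b), the $\omega$-uniform vanishing bound in (c), and ruling out a maximizer at $\eta=0$ in (d)) are handled correctly.
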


We now recall the well-known existence
of the Lyapunov exponent for the solutions to \eqref{eq:PAM}.

\begin{proposition}[{\cite[Proposition~\ref{LD-prop:lyapunov}, Corollary~\ref{LD-cor:lyapunov_alt}]{DreSchmi19}}]
	\label{prop:lyapunov_tight} Assume \eqref{eq:POT}--\eqref{eq:PAM_initial}.
	For all $v\geq0$ and all $u_0\in\mathcal{I}_{\text{PAM}}$ the limit
	\begin{equation}
		\label{eq:lyapunov_alt}
		\Lambda (v) = \lim_{t\to\infty} \frac{1}{t} \ln u^{u_0}(t,vt)
	\end{equation}
        exists $\p$-a.s., is non-random and independent of $u_0$.
        We have $\Lambda(0)=\es$, $\Lambda $ is nondecreasing, linear
        on $[0,v_c]$, strictly concave on $(v_c,\infty)$ and
        $\lim_{v\to\infty} \frac{\Lambda(v)}{v}=-\infty$. In particular,
        there exists a unique $v_0>0$ such that $\Lambda(v_0)=0$. Furthermore, the convergence in \eqref{eq:lyapunov_alt} holds uniformly on any compact interval $K\subset[0,\infty)$. 
\end{proposition}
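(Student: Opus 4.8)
The plan is to reduce everything to the $\p$-a.s.\ asymptotics of the hitting-time functional $E_{vt}\big[\exp\{\int_0^{H_0}\xi(B_s)\,\d s\};\,H_0\le t\big]$ and then to read the properties of $\Lambda$ off the resulting formula, using the appendix lemmas on $L$. First I would remove the dependence on $u_0$: by \eqref{eq:PAM_initial} and monotonicity of \eqref{eq:feynman_kac_PAM} in the initial datum, $\delta' u^{\mathds 1_{[-\delta',0]}}\le u^{u_0}\le C' u^{\mathds 1_{(-\infty,0]}}$, so it suffices to treat the two extreme initial conditions, and a strong-Markov/summation argument over the decomposition $(-\infty,0]=\bigsqcup_{k\ge0}[-(k+1)\delta',-k\delta']$ (ending near $-k\delta'$ rather than near $0$ costs only a Gaussian factor) shows that $u^{\mathds 1_{(-\infty,0]}}$ and $u^{\mathds 1_{[-\delta',0]}}$ differ by at most a sub-exponential factor, irrelevant after $\tfrac1t\ln(\cdot)$. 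Hence $u_0=\mathds 1_{(-\infty,0]}$ may be assumed. The point $v=0$ is then handled directly: $u(t,0)\le\tfrac12 e^{\es t}$ gives the upper bound, while confining the Brownian motion in a long interval to the left of the origin on which $\xi\ge\es-\epsilon$ — intervals which exist $\p$-a.s.\ of every length by ergodicity and $\es=\esssup\xi$ — gives the matching lower bound, so $\Lambda(0)=\es$.

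For $v>0$ the Brownian motion in \eqref{eq:feynman_kac_PAM} must cross $0$ before time $t$, and standard estimates (of the type underlying \eqref{eq:PAM_sandwich}, cf.\ \cite{DreSchmi19}) reduce $u(t,vt)$, up to sub-exponential factors and on the event $\mathcal H_{vt}$ of \eqref{eq:H_n}, to $E_{vt}\big[\exp\{\int_0^{H_0}\xi(B_s)\,\d s\};\,H_0\le t\big]$. Writing $\xi=\zeta+\es$ and tilting with $\eta=\eta^\zeta_{vt}(v)$, the parameter of \eqref{eq:HitExpect} (so that $E^{\zeta,\eta}_{vt}[H_0]=t$), this functional equals
\begin{equation*}
  e^{(\es-\eta)t}\,e^{vt\,\overline L^\zeta_{vt}(\eta)}\,
  E^{\zeta,\eta}_{vt}\big[e^{(\es-\eta)(H_0-t)};\,H_0\le t\big],
\end{equation*}
and the last factor lies between $e^{-O(\sqrt t)}$ and $1$ by a central limit theorem for $H_0$ under the tilt (its variance is of order $t$ by Lemma~\ref{le:quenched_lmgf}), hence is $e^{o(t)}$. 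Using $\overline L^\zeta_{vt}(\eta)\to L(\eta)$ $\p$-a.s.\ (Lemma~\ref{le:annealed_lgmf}(b)) and $\eta^\zeta_{vt}(v)\to\overline\eta(v)$ (since $(\overline L^\zeta_{vt})'(\eta^\zeta_{vt}(v))=\tfrac1v\to L'(\overline\eta(v))=\tfrac1v$ and $L'$ is a strictly increasing homeomorphism, via the equicontinuity \eqref{eqn:equic}), one obtains
\begin{equation*}
  \Lambda(v)=\es-\overline\eta(v)+v\,L(\overline\eta(v))
  =\es-v\,\sup_{\eta\le0}\Big(\tfrac\eta v-L(\eta)\Big),
\end{equation*}
where for $v\le v_c$ the supremum is attained at $\eta=0$, i.e.\ $\Lambda(v)=\es+vL(0)$. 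The right-hand side is deterministic, giving the non-randomness of $\Lambda$, and independence of $u_0$ was established above.

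All remaining assertions follow from this formula and Lemma~\ref{le:annealed_lgmf}. On $[0,v_c]$ the function $\Lambda(v)=\es+vL(0)$ is affine; on $(v_c,\infty)$, differentiating $L'(\overline\eta(v))=\tfrac1v$ gives $\overline\eta'(v)=-1/\big(v^2L''(\overline\eta(v))\big)$, hence $\Lambda'(v)=L(\overline\eta(v))$ and $\Lambda''(v)=-1/\big(v^3L''(\overline\eta(v))\big)<0$, i.e.\ strict concavity, the two branches matching $C^1$ at $v_c$ since $\overline\eta(v)\uparrow0$ as $v\downarrow v_c$. Since $\es>\ei$ forces $L(0)<0$ and $L(\eta)\le L(0)<0$ for all $\eta\le0$, $\Lambda$ is nonincreasing (in fact strictly decreasing), and $\Lambda(v)/v=L(\overline\eta(v))+o(1)\to-\infty$ as $v\to\infty$ because $\overline\eta(v)\downarrow-\infty$ and $L(\eta)\to-\infty$; with $\Lambda(0)=\es>0$ and continuity, the intermediate value theorem yields a unique $v_0>0$ with $\Lambda(v_0)=0$. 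For uniform convergence on a compact $K\subset[0,\infty)$: on $K\cap[\delta_0,\infty)$ all the above estimates are uniform in $v$ — $\mathcal H_{vt}$ occurs simultaneously for all such $v$ once $vt$ is large (cf.\ \eqref{eq:H_n}), the convergences $\overline L^\zeta_{vt}\to L$ and $\eta^\zeta_{vt}(v)\to\overline\eta(v)$ are uniform by \eqref{eqn:equic} and \eqref{eq:triangleLip}, and the CLT bound is uniform — while the window $[0,\delta_0]$ is handled by the sandwich of the first paragraph together with the monotonicity just proved.

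The main obstacle is the middle step: justifying the passage from the time-truncated quantity $u(t,vt)$ to the hitting-time functional and controlling the sub-exponential prefactor through a local limit theorem for the tilted hitting time, uniformly in $v$ over compacts. This is precisely where the mixing assumption \eqref{eq:MIX} and the sharp hitting-time estimates of \cite{DreSchmi19} are genuinely used, and where extra care is needed near $v=0$, where $vt$ does not tend to infinity and the $\overline L^\zeta_{vt}$-asymptotics are unavailable.
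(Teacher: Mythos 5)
This proposition is not proved in the paper at all: it is quoted verbatim from \cite{DreSchmi19}, so there is no in-paper argument to compare yours against, and your outline has to stand on its own. For $v>v_c$ it essentially does: the sandwich in $u_0$, the reduction to the hitting-time functional, the tilt at $\eta^\zeta_{vt}(v)$, and the identification $\Lambda(v)=\es-\overline\eta(v)+vL(\overline\eta(v))$ together with the concavity computation $\Lambda'(v)=L(\overline\eta(v))$, $\Lambda''(v)=-1/(v^3L''(\overline\eta(v)))<0$ are all sound. The genuine gap is the regime $v\in(0,v_c]$, i.e.\ precisely the linear piece of $\Lambda$. The tilt parameter $\eta^\zeta_{vt}(v)$ solving $E^{\zeta,\eta}_{vt}[H_0]=t$ exists only for $v>v_c$ (Lemma~\ref{le:concEtaEmpLT_tight}(a)): for $v<v_c$ the equation $(\overline{L}^\zeta_{vt})'(\eta)=1/v$ has no root $\eta<0$, since $(\overline{L}^\zeta_{vt})'$ only increases up to $\approx L'(0-)=1/v_c<1/v$. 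Worse, the reduction of $u(t,vt)$ to $E_{vt}\big[e^{\int_0^{H_0}\xi};H_0\le t\big]$ is itself exponentially lossy there: after hitting $0$ the particle still has $t-H_0$ units of time, of order $t$ when $v<v_c$, during which it collects a factor $e^{\approx\es(t-H_0)}$, and this contribution is exactly what produces $\Lambda(v)=\es+vL(0-)$. One needs instead the strong Markov decomposition $u(t,vt)=E_{vt}\big[e^{\int_0^{H_0}\xi}\,u(t-H_0,0);H_0\le t\big]$, the already established $\Lambda(0)=\es$ with some uniformity in the leftover time, and the tilt at $\eta=0$, under which $E[H_0]\approx vt/v_c<t$ so the constraint $H_0\le t$ is costless. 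Your remark that ``for $v\le v_c$ the supremum is attained at $\eta=0$'' is a true statement about the variational formula, but it does not substitute for a derivation of that formula in a regime where your tilting argument is unavailable.

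Two smaller points. First, the claim $\Lambda(v)/v=L(\overline\eta(v))+o(1)$ is false: the omitted term $-\overline\eta(v)/v$ is of order $v$, since $L'(\eta)\asymp|\eta|^{-1/2}$ forces $|\overline\eta(v)|\asymp v^2$. The conclusion $\Lambda(v)/v\to-\infty$ survives, most cleanly via $\Lambda(v)/v\le\es/v-\eta_0/v+L(\eta_0)\to L(\eta_0)$ for any fixed $\eta_0<0$, followed by $\eta_0\to-\infty$ using $L(\eta_0)\to-\infty$ from \eqref{eq:BS}. Second, you prove that $\Lambda$ is nonincreasing while the statement says ``nondecreasing''; the latter is plainly a typo (it is incompatible with $\Lambda(0)=\es>0$, the existence of $v_0$ with $\Lambda(v_0)=0$, and $\Lambda(v)/v\to-\infty$), so your version is the intended one. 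The uniform convergence near $v=0$ is also only gestured at; the clean route is pointwise a.s.\ convergence plus the elementary equicontinuity bound $|\tfrac1t\ln u(t,x)-\tfrac1t\ln u(t,y)|\le C|x-y|/t+o(1)$, which holds for all $x,y$ and does not require $v>v_c$.
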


\begin{lemma}[{\cite[Lemma~\ref{LD-le:concEtaEmpLT} (b)]{DreSchmi19}}]
  \begin{enumerate}
    \item
    For every $v>v_c$ there exists a finite random variable
    $\mathcal{N}=\mathcal{N}(v)$ such that for all $x\geq\mathcal{N}$
    the solution $\eta_x^\zeta(v)<0$ to
    $E_x^{\zeta,\eta_x^\zeta(v)}[H_0]=\frac{x}{v}$  exists.

    \item For each $q\in\mathbb{N}$ and each compact interval $V\subset(v_c,\infty)$,
    there exists
    $\Cl{const:concEtaEmpLT}:=\Cr{const:concEtaEmpLT}(V,q)\in(0,\infty)$ such that
    \begin{equation}
      \label{eq:concEtaEmpLT_cont_tight}
      \p\Big( \sup_{v\in V}\sup_{x\in[n,n+1)} |  \eta_x^\zeta(v) -
        \overline{\eta}(v)| \geq \Cr{const:concEtaEmpLT}\sqrt{\frac{\ln n}{n}} \Big)
      \leq \Cr{const:concEtaEmpLT} n^{-q}\qquad \text{for all }n\in\mathbb{N}.
    \end{equation}
  \end{enumerate}
  \label{le:concEtaEmpLT_tight}
\end{lemma}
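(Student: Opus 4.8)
The plan is to exploit that both $\eta_x^\zeta(v)$ and $\overline\eta(v)$ are roots of strictly increasing functions: by \eqref{eq:Lbar_x'_tight} and \eqref{eq:legTraExpectedLogMGfct_1_tight}, $\eta_x^\zeta(v)$ solves $(\overline L_x^\zeta)'(\eta)=1/v$ while $\overline\eta(v)$ solves $L'(\eta)=1/v$. The whole lemma then reduces to the a.s.\ convergence $\overline L_x^\zeta\to L$ (Lemma~\ref{le:annealed_lgmf}(b)) together with a quantitative rate for the \emph{derivatives}, the latter being an ergodic-type law of large numbers with a Rio-type exponential speed. I would proceed in three steps.

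\emph{Step 1 (existence, part (a)).} Fix $v>v_c$ and let $\overline\eta(v)<0$ be as in \eqref{eq:legTraExpectedLogMGfct_1_tight}. Since $L'$ is continuous and strictly increasing, choose $\eta_-<\overline\eta(v)<\eta_+<0$, so that $L'(\eta_-)<1/v<L'(\eta_+)$. By Lemma~\ref{le:annealed_lgmf}(b), on a fixed $\p$-full event $\overline L_x^\zeta(\eta)\to L(\eta)$ as $x\to\infty$ for all $\eta<0$; as the $\overline L_x^\zeta$ are convex (Lemma~\ref{le:quenched_lmgf}) and the limit $L$ is $C^\infty$, standard properties of convex functions upgrade this to $(\overline L_x^\zeta)'(\eta)\to L'(\eta)$ at every $\eta<0$. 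Hence a.s.\ for $x$ large $(\overline L_x^\zeta)'(\eta_-)<1/v<(\overline L_x^\zeta)'(\eta_+)$, and the intermediate value theorem together with the strict monotonicity of $(\overline L_x^\zeta)'$ (Lemma~\ref{le:quenched_lmgf}, $(\overline L_x^\zeta)''>0$) yield a unique $\eta_x^\zeta(v)\in(\eta_-,\eta_+)$ with $E_x^{\zeta,\eta_x^\zeta(v)}[H_0]=x/v$; this defines the random threshold $\mathcal N(v)$. (Alternatively, part (a) follows a posteriori from part (b) with $q\ge2$ and Borel--Cantelli.)

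\emph{Step 2 (reduction of part (b)).} Using continuity of $\overline\eta$ on the compact set $V$ (Lemma~\ref{le:annealed_lgmf}(d)), fix a compact interval $\triangle\subset(-\infty,0)$ and $\rho>0$ with $[\overline\eta(v)-\rho,\overline\eta(v)+\rho]\subset\triangle$ for all $v\in V$, and put, for a constant $C_1=C_1(V,q,\triangle)$ to be fixed in Step 3,
\[ \mathcal G_n:=\Big\{\sup\nolimits_{\eta\in\triangle,\ n\le x<n+1}\big|(\overline L_x^\zeta)'(\eta)-L'(\eta)\big|\le C_1\sqrt{(\ln n)/n}\Big\}. \]
On $\mathcal G_n$ and for $n$ large, the bound $L''\ge\Cr{constDerLMG}^{-1}$ on $\triangle$ (Lemma~\ref{le:quenched_lmgf}(b)) gives $(\overline L_x^\zeta)'(\overline\eta(v)+\rho)>1/v>(\overline L_x^\zeta)'(\overline\eta(v)-\rho)$ for every $v\in V$, so $\eta_x^\zeta(v)$ exists and lies in $\triangle$; moreover a mean value expansion $L'(\overline\eta(v))-(\overline L_x^\zeta)'(\overline\eta(v))=(\overline L_x^\zeta)''(\widetilde\eta)\,(\eta_x^\zeta(v)-\overline\eta(v))$ with $\widetilde\eta\in\triangle$, combined with $(\overline L_x^\zeta)''\ge\Cr{constDerLMG}^{-1}$ on $\triangle$, produces $|\eta_x^\zeta(v)-\overline\eta(v)|\le\Cr{constDerLMG}C_1\sqrt{(\ln n)/n}$ for all $v\in V$, $x\in[n,n+1)$. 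It thus suffices to show $\p(\mathcal G_n^c)\le cn^{-q}$; the finitely many small $n$ are handled by enlarging $\Cr{const:concEtaEmpLT}$ so the claimed bound is trivially $\ge1$ there.

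\emph{Step 3 (the concentration estimate, and the main obstacle).} Differentiating \eqref{eqn:sumnotationa} gives $x(\overline L_x^\zeta)'(\eta)=(L_x^\zeta)'(\eta)+\sum_{i=1}^{\lceil x\rceil-1}(L_i^\zeta)'(\eta)$, with $\E[(L_i^\zeta)'(\eta)]=L'(\eta)$ for each $i$ by stationarity; the fractional leftover $\tfrac1x(L_x^\zeta)'(\eta)$ contributes $O(1/n)$ (since $|(L_x^\zeta)'|\le\Cr{constDerLMG}$ on $\triangle$, Lemma~\ref{le:quenched_lmgf}(b)), and the centred variables $Y_i(\eta):=(L_i^\zeta)'(\eta)-L'(\eta)$ are bounded and, by the same lemma, Lipschitz in $\eta$ on $\triangle$ with constant $\le\Cr{constDerLMG}$, uniformly in $i$ and in $\zeta$ with $\ei-\es\le\zeta\le0$. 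Exactly as for the variables $\widetilde L_i^{\zeta,h,v}$ in the proof of Lemma~\ref{le:spacePertLemma}, the sequence $(Y_i(\eta))_i$ satisfies the mixing condition of \cite[{Lemma~\ref{LD-le:expMixLi}}]{DreSchmi19} (a consequence of \eqref{eq:MIX} together with the exponentially small contribution of long rightward excursions of $B$ before hitting $\lceil i\rceil-1$), so Lemma~\ref{lem:rioDepExp_tight} yields $\p\big(|\tfrac1m\sum_{i\le m}Y_i(\eta)|>t\big)\le\sqrt e\,\exp\{-mt^2/(2C)\}$ for each fixed $\eta\in\triangle$; taking $m=\lceil x\rceil-1\asymp n$ and $t\asymp C_1\sqrt{(\ln n)/n}$ makes this $\le cn^{-q-2}$ once $C_1$ is large. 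A union bound over an $n^{-1}$-mesh grid of $\triangle$ (polynomial cost) plus Lipschitz interpolation (error $O(1/n)$, negligible against $\sqrt{(\ln n)/n}$), together with the observation that for $x\in[n,n+1)$ only the two values $\lceil x\rceil\in\{n,n+1\}$ of the sum occur, upgrade this to $\p(\mathcal G_n^c)\le cn^{-q}$; no separate supremum over $v\in V$ is needed since $\overline\eta(V)\subset\triangle$. The step I expect to be the real work is this concentration estimate — specifically, checking that the derivatives $(L_i^\zeta)'(\eta)$ depend weakly enough on the far-away values of $\xi$ to fall under the Rio-type inequality, and then transferring the pointwise bound to one uniform in $(\eta,x,v)$ with only polynomial loss; both points, however, run entirely parallel to the argument already carried out for Lemma~\ref{le:spacePertLemma}.
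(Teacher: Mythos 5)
The paper does not actually prove this lemma: it is imported verbatim from the companion paper \cite{DreSchmi19} (cited as Lemma~\ref{LD-le:concEtaEmpLT}~(b) there), so there is no in-paper proof to compare your argument against. On its own terms your proposal is coherent and follows what is almost certainly the intended route: identify $\eta_x^\zeta(v)$ and $\overline\eta(v)$ as roots of $(\overline L_x^\zeta)'=1/v$ and $L'=1/v$ respectively, use the uniform two-sided bounds on $(\overline L_x^\zeta)''$ from Lemma~\ref{le:quenched_lmgf}(b) to convert a deviation bound for the derivatives into one for the roots, and prove the derivative concentration by the additive decomposition \eqref{eqn:sumnotationa} plus the Rio-type inequality of Lemma~\ref{lem:rioDepExp_tight} and a grid/Lipschitz union bound. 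Your Step~3 is structurally identical to the paper's own proof of Lemma~\ref{le:spacePertLemma}, which is the closest in-paper analogue, and your Steps~1--2 are standard convexity arguments that check out (existence via IVT after upgrading pointwise convergence of convex functions to convergence of derivatives; the mean-value reduction is valid because you first force $\eta_x^\zeta(v)$ into $\triangle$ on the good event).

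The one point you should not gloss over is the mixing property of the \emph{derivatives} $(L_i^\zeta)'(\eta)=E_i^{\zeta,\eta}[\tau_{\lceil i\rceil-1}]$. The cited mixing input \cite[Lemma~\ref{LD-le:expMixLi}]{DreSchmi19} is invoked in this paper only for the increments $L_i^\zeta(\eta)$ themselves; $(L_i^\zeta)'$ is a \emph{ratio} of two Feynman--Kac expectations, so establishing that it is well approximated by a local functional of $\zeta$ (up to errors exponentially small in the window size) additionally requires a uniform lower bound on the denominator $E_i[\exp\{\int_0^{H_{\lceil i\rceil-1}}(\zeta+\eta)\}]$ on $\triangle$ — which is available from Lemma~\ref{le:quenched_lmgf}(b), but is a step you assert by analogy rather than carry out. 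With that verification supplied, the argument closes.
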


\begin{lemma}[{\cite[Lemma~\ref{LD-le:PertEta_n}]{DreSchmi19}}]
  \label{le:PertEta_n_tight}
  There exists a constant $\Cl{const_eta_pert}>0$ such that  $\p$-a.s., for all
  $x \in (0,\infty)$ large enough, uniformly in $v\in V$ and $0\leq h\leq x$,
  \begin{equation}
    \label{eq:PertEta_n_tight}
    \big| \eta_{x}^\zeta(v) - \eta_{x+h}^\zeta(v)   | \leq
    \Cr{const_eta_pert}\frac{h}{x}.
  \end{equation}
\end{lemma}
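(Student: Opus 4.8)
The plan is to compare the two implicit equations defining $\eta_x^\zeta(v)$ and $\eta_{x+h}^\zeta(v)$, exploiting that $L_i^\zeta$ and its $\eta$-derivatives are uniformly well behaved as long as the arguments stay in a fixed compact interval. Recall from Lemma~\ref{le:concEtaEmpLT_tight}(a) (see also \eqref{eq:HitExpect}) that $\eta_x^\zeta(v)$ solves $E_x^{\zeta,\eta}[H_0]=x/v$, which by \eqref{eq:Lbar_x'_tight} and \eqref{eqn:sumnotationa} is the same as $\sum_{i=1}^{x}(L_i^\zeta)'(\eta)=x/v$; likewise $\sum_{i=1}^{x+h}(L_i^\zeta)'(\eta_{x+h}^\zeta(v))=(x+h)/v$. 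First I would fix the compact interval $\triangle\subset(-\infty,0)$ from \eqref{eq:H_n} and use \eqref{eq:H_n}: there is a $\p$-a.s.\ finite $\Cr{constHn}$ such that for all $x\ge\Cr{constHn}$, all $v\in V$ and all $0\le h\le x$ one has $\eta_x^\zeta(v),\eta_{x+h}^\zeta(v)\in\triangle$ (indeed $x,x+h\in[\Cr{constHn},2x]$). Every constant produced below depends only on $\triangle$, on $\es,\ei$ and on $V$ through $\sup_{v\in V}v^{-1}<\infty$, hence is uniform in $v\in V$.

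Subtracting the two defining identities and using $\sum_{i=1}^{x}(L_i^\zeta)'(\eta_x^\zeta(v))=x/v$, one obtains for $x\ge\Cr{constHn}$ and $0\le h\le x$ the identity
\begin{equation*}
  \sum_{i=1}^{x+h}\bigl((L_i^\zeta)'(\eta_x^\zeta(v))-(L_i^\zeta)'(\eta_{x+h}^\zeta(v))\bigr)
  =\sum_{i=x+1}^{x+h}(L_i^\zeta)'(\eta_x^\zeta(v))-\frac hv .
\end{equation*}
By \eqref{eqn:sumnotationa} the left-hand side equals $(x+h)\bigl((\overline L_{x+h}^\zeta)'(\eta_x^\zeta(v))-(\overline L_{x+h}^\zeta)'(\eta_{x+h}^\zeta(v))\bigr)$, and a first-order Taylor expansion rewrites it as $(x+h)(\overline L_{x+h}^\zeta)''(\widetilde\eta)\,(\eta_x^\zeta(v)-\eta_{x+h}^\zeta(v))$ for some $\widetilde\eta\in\triangle$ between $\eta_x^\zeta(v)$ and $\eta_{x+h}^\zeta(v)$. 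Since Lemma~\ref{le:quenched_lmgf}(b) bounds $(\overline L_{x+h}^\zeta)''$ below by $\Cr{constDerLMG}^{-1}$ on $\triangle$, this prefactor is at least $x/\Cr{constDerLMG}$. It therefore remains to bound the right-hand side by a constant times $h$, uniformly.

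For this, note that by the strong Markov property at $H_x$ (cf.\ \eqref{eqn:sumnotationa} and the computation leading to \eqref{eq:sum2}) one has $\sum_{i=x+1}^{x+h}L_i^\zeta(\eta)=\ln E_{x+h}\bigl[e^{\int_0^{H_x}(\zeta(B_s)+\eta)\d s}\bigr]$, hence
\begin{equation*}
  0\le\sum_{i=x+1}^{x+h}(L_i^\zeta)'(\eta)
  =\frac{E_{x+h}\bigl[e^{\int_0^{H_x}(\zeta(B_s)+\eta)\d s}\,H_x\bigr]}{E_{x+h}\bigl[e^{\int_0^{H_x}(\zeta(B_s)+\eta)\d s}\bigr]},
\end{equation*}
a tilted expectation of the hitting time of $x$ started from $x+h$, i.e.\ of a spatial displacement $h$. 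Because $\zeta(\cdot)+\eta$ is sandwiched between two negative constants for $\eta\in\triangle$, this tilted hitting time is of order $h$: decomposing the sum via \eqref{eqn:sumnotationa}--\eqref{eqn:sumnotationb} into its $\asymp h$ integer terms, each lying in $[\Cr{constDerLMG}^{-1},\Cr{constDerLMG}]$ by Lemma~\ref{le:quenched_lmgf}(b), plus at most two residual sub-unit contributions bounded by a constant through \eqref{eq:BS}, yields $\sum_{i=x+1}^{x+h}(L_i^\zeta)'(\eta_x^\zeta(v))\le C_1 h$ uniformly in $v\in V$ and $0\le h\le x$. Consequently the right-hand side of the displayed identity has absolute value at most $(C_1+\sup_{v\in V}v^{-1})h$, and dividing by the prefactor gives $|\eta_x^\zeta(v)-\eta_{x+h}^\zeta(v)|\le\Cr{const_eta_pert}\,h/x$ with $\Cr{const_eta_pert}=\Cr{constDerLMG}(C_1+\sup_{v\in V}v^{-1})$, which is \eqref{eq:PertEta_n_tight}.

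I expect the delicate points to be twofold. First, one must invoke \eqref{eq:H_n} carefully so that $\eta_x^\zeta(v)$ and $\eta_{x+h}^\zeta(v)$ simultaneously lie in the \emph{fixed} interval $\triangle$ for all $v\in V$ and all admissible $h$ once $x$ is large, since otherwise the uniform constants of Lemma~\ref{le:quenched_lmgf}(b) are unavailable. Second, one must verify that the bound on the tilted hitting-time expectation is genuinely linear in $h$, not merely of the form $h+O(1)$ — this is exactly what forces the explicit split into integer and sub-unit contributions above, and is what makes the estimate survive when $h$ is small (say $h\le 1$).
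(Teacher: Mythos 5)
The paper does not prove this lemma at all: it is recalled verbatim from the companion paper \cite{DreSchmi19} (Lemma LD-le:PertEta\_n), so there is no in-text proof to compare against. Your argument is the natural one — differentiate the two defining identities $\sum_{i=1}^{x}(L_i^\zeta)'(\eta_x^\zeta(v))=x/v$ and $\sum_{i=1}^{x+h}(L_i^\zeta)'(\eta_{x+h}^\zeta(v))=(x+h)/v$, apply the mean value theorem to $(\overline L_{x+h}^\zeta)'$, and use the uniform lower bound $\Cr{constDerLMG}^{-1}$ on $(\overline L_{x+h}^\zeta)''$ over $\triangle$ from Lemma~\ref{le:quenched_lmgf}(b) — and the algebra checks out, including the reduction to bounding $\sum_{i=x+1}^{x+h}(L_i^\zeta)'(\eta_x^\zeta(v))-h/v$ by $Ch$. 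The appeal to \eqref{eq:H_n} to place both $\eta_x^\zeta(v)$ and $\eta_{x+h}^\zeta(v)$ in the fixed compact $\triangle$ simultaneously for all $v\in V$ is also handled correctly.

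The one place where your write-up does not deliver what you promise is the small-$h$ case, which you correctly identify as the delicate point but then resolve incorrectly. Your decomposition — $\asymp h$ integer terms each at most $\Cr{constDerLMG}$, plus residual sub-unit terms ``bounded by a constant'' — yields only $\sum_{i=x+1}^{x+h}(L_i^\zeta)'\le C(h+1)$, which for $h\le 1$ gives $|\eta_x^\zeta(v)-\eta_{x+h}^\zeta(v)|\le C/x$ rather than $Ch/x$. To close this you need the residual contributions to be bounded by a constant times their (sub-unit) displacement, not merely by a constant. This is true and easy: for a displacement $d\le 1$ and $\zeta+\eta\le \sup\triangle<0$, the numerator of the tilted hitting-time expectation satisfies $E_{y+d}\bigl[e^{\int_0^{H_y}(\zeta(B_s)+\eta)\,\d s}H_y\bigr]\le E_{y+d}\bigl[e^{-cH_y}H_y\bigr]=-\tfrac{\d}{\d c}e^{-\sqrt{2c}\,d}=\tfrac{d}{\sqrt{2c}}e^{-\sqrt{2c}\,d}\le \tfrac{d}{\sqrt{2c}}$ with $c:=|\sup\triangle|$, while the denominator is bounded below by $e^{-\sqrt{2(|\eta|+\es-\ei)}}$ via \eqref{eq:BS}; hence each residual term is $O(d)$, and the whole sum is $O(h)$ uniformly in $0\le h\le x$ and $v\in V$. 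With that one correction the proof is complete.
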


In the final lemma we recall a Hoeffding-type inequality for mixing
random variables, which is a consequence of {\cite[Theorem 2.4]{rio}}.

\begin{lemma}[{\cite[Corollary~\ref{LD-cor:rioDepExp}]{DreSchmi19}}]
  \label{lem:rioDepExp_tight}
  Let $(Y_i)_{i\in\mathbb{Z}}$ be a sequence of real-valued bounded random
  variables, $\widetilde{\F}^k:=\sigma(Y_j:j\geq k)$, and let
  $(m_1,\ldots,m_{n})$ be an $n$-tuple of positive real numbers such that
  for all $i\in\{1,\ldots,n\}$,
  \begin{equation*}
    \sup_{j \in \{1, \ldots, i\}}\Big( \|Y_i^2\|_\infty + 2 \Big \Vert
      Y_i\sum_{k=j}^{i-1}\E[Y_k|{\widetilde{\F}}^i]\Big \Vert_\infty \Big) \leq m_i,
  \end{equation*}
  with the convention $\sum_{k=i}^{i-1}\E[Y_k|\widetilde{\F}^i]=0$. Then for
  every $x>0$,
  \begin{equation*}
    \p\Big( \Big| \sum_{i=1}^{n} Y_i \Big|\geq x \Big) \leq \sqrt{e}\exp\left\{
      -x^2/(2m_1+\cdots+2m_{n}) \right\}.
  \end{equation*}
\end{lemma}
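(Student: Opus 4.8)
The plan is to obtain the bound directly from Rio's exponential inequality \cite[Theorem~2.4]{rio}, which furnishes a sub-Gaussian control of the log-Laplace transform of partial sums of a bounded sequence whose ``predictable tails'' are small; the present statement is just the version of that inequality adapted to conditioning on the future $\sigma$-field. The first step is a time reversal: setting $\widetilde Y_i := Y_{n+1-i}$ for $1\le i\le n$ leaves $\sum_{i=1}^n Y_i = \sum_{i=1}^n \widetilde Y_i$ unchanged, but turns the decreasing family of future $\sigma$-algebras $\widetilde{\F}^k=\sigma(Y_j:j\ge k)$ into an ordinary increasing filtration $\mathcal G_i:=\sigma(\widetilde Y_1,\dots,\widetilde Y_i)$ for the reversed sequence. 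Under this relabelling the hypothesis becomes a bound of the form $\|\widetilde Y_i^2\|_\infty + 2\sup_{j\le i}\big\|\widetilde Y_i\sum_{k}\E[\widetilde Y_k\mid\mathcal G_i]\big\|_\infty \le m_i$, i.e.\ precisely a control of the per-index contribution $m_i$ to the variance proxy that enters Rio's theorem.

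Next I would invoke \cite[Theorem~2.4]{rio} with variance proxy $m_1+\dots+m_n$, obtaining for $\lambda$ in the admissible range an estimate $\E[\exp(\lambda\sum_i \widetilde Y_i)]\le \exp\big(\tfrac12\lambda^2(m_1+\dots+m_n)\big)$, together with the same bound for $-\lambda$ (the hypothesis is invariant under $Y_i\mapsto -Y_i$). At this point one has to check that the supremum over $j\in\{1,\dots,i\}$ in our hypothesis dominates the precise term Rio requires: he controls the $L^\infty$-norm of $Y_i$ times a block sum of conditional expectations of the preceding variables, and the supremum over the truncated sums $\sum_{k=j}^{i-1}$ is at least as large, so the hypothesis is if anything slightly stronger than needed and the application goes through verbatim. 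A standard Chernoff step then concludes: $\P(\sum_i Y_i\ge x)\le \exp\big(\tfrac12\lambda^2(m_1+\dots+m_n)-\lambda x\big)$, optimized over the admissible $\lambda$, yields the Gaussian tail $\exp(-x^2/(2m_1+\dots+2m_n))$, while the prefactor $\sqrt e$ and the passage to the two-sided bound $\P(|\sum_i Y_i|\ge x)$ come out exactly as in \cite[Theorem~2.4]{rio}, the $\sqrt e$ being the universal loss incurred by the restriction on the range of $\lambda$ there.

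The hard part is purely bookkeeping rather than conceptual: one must align the normalization conventions and the direction of the filtration in \cite[Theorem~2.4]{rio} with the conditioning on the future $\sigma$-algebras $\widetilde{\F}^k$ used in the statement, and verify that $\sup_{j}\big(\|Y_i^2\|_\infty+2\|Y_i\sum_{k=j}^{i-1}\E[Y_k\mid\widetilde{\F}^i]\|_\infty\big)$ really is an admissible choice for the quantity $m_i$ appearing in Rio's inequality, and that the constants in the resulting exponent are the ones claimed. Everything past that is the routine exponential-Markov optimization. In the interest of brevity one may of course instead simply quote \cite[Corollary~\ref{LD-cor:rioDepExp}]{DreSchmi19}, where exactly this reduction is carried out in detail.
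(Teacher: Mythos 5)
Your proposal is correct and matches the paper's treatment: the lemma is not proved here at all but quoted from \cite[Corollary~\ref{LD-cor:rioDepExp}]{DreSchmi19}, where precisely the reduction you describe — time reversal to align the future $\sigma$-algebras $\widetilde{\F}^k$ with the increasing filtration of \cite[Theorem~2.4]{rio}, identification of the $m_i$ with Rio's variance-proxy terms, and the resulting $\sqrt{e}$-prefactor Gaussian tail — is carried out. Your sketch of that reduction is sound, so nothing further is needed.
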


\section{Appendix: Non-triviality of the regime of validity} \label{sec:nonTrivial}

The next lemma is used to show that there are potentials $\xi $ that simultaneously
satisfy the assumptions of Theorem~\ref{thm:PAMBdd} as well as of
Theorems~\ref{thm:FKPPunBdd} and~\ref{thm:non-monotone}.

\begin{lemma}
  \label{lem:Cxi}
  Let $\xi$ be the potential constructed in \eqref{eq:def_xi} for real
  numbers $\es$ and $\ei$ satisfying $0 < \ei < \es$ (with
    \eqref{eq:relation_es_ei} not necessarily fulfilled). Then, making
  the dependence of $L$ explicit in writing $L= L_{\xi},$ we have that
  the family of real numbers $\frac{1}{L_{C \xi}'(0-)},$
  $C \in [1,\infty),$ is upper bounded away from infinity.
\end{lemma}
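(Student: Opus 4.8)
The plan is to reformulate the claim in terms of the critical velocity and then bound it from above using a single tilted-measure estimate that is uniform in $C$. By definition $v_c(C\xi) := \frac{1}{L_{C\xi}'(0-)}$, so we must show $\sup_{C\ge1}\frac{1}{L_{C\xi}'(0-)}<\infty$, i.e.\ that $L_{C\xi}'(0-)$ is bounded below by a fixed positive constant. First I would observe that for every $C\ge1$ the potential $C\xi$ still satisfies \eqref{eq:hoelder_cont}, \eqref{eq:POT}, \eqref{eq:STAT} and \eqref{eq:MIX} (multiplying by $C\ge1$ only rescales the Hölder constant and the numbers $\ei,\es$, and affects neither stationarity nor $\psi$-mixing), so every statement in the Appendix applies verbatim to $C\xi$. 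Writing $\zeta:=\xi-\es\le0$, the logarithmic moment generating function of $C\xi$ is $L_{C\xi}(\eta)=\E\big[\ln E_1\big[\exp\{\int_0^{H_0}(C\zeta(B_s)+\eta)\,\d s\}\big]\big]$, and by Lemma~\ref{le:quenched_lmgf}(a), for every $\eta<0$,
\[
  L_{C\xi}'(\eta) = \E\big[E_1^{C\zeta,\eta}[H_0]\big],
  \qquad
  E_1^{C\zeta,\eta}[H_0]
  = \frac{E_1\big[e^{\int_0^{H_0}(C\zeta(B_s)+\eta)\,\d s}\,H_0\big]}
         {E_1\big[e^{\int_0^{H_0}(C\zeta(B_s)+\eta)\,\d s}\big]}.
\]
Since $L_{C\xi}'$ is increasing (Lemma~\ref{lem:vcetc}(a)), $L_{C\xi}'(0-)\ge L_{C\xi}'(\eta_0)$ for any fixed $\eta_0<0$, so it suffices to produce one $\eta_0<0$ and one constant $c_*>0$, both independent of $C$, with $L_{C\xi}'(\eta_0)\ge c_*$ for all $C\ge1$.

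The key idea is to localize, in both the environment and the Brownian path, to a region where the $C$-dependence disappears. Fix $\eta_0<0$ (say $\eta_0=-1$) and $M>1$ (say $M=2$), and let $G$ be the $\omega$-event that $\xi\equiv\es$ on $[0,M]$, equivalently that $[0,M]\subset\bigcup_i[\omega^i-1,\omega^i+1]$. On $G$, any Brownian path started at $1$ that remains in $[0,M]$ until it hits $0$ satisfies $\int_0^{H_0}C\zeta(B_s)\,\d s=0$; restricting the numerator of $E_1^{C\zeta,\eta_0}[H_0]$ to such paths and bounding the denominator from above via $C\zeta\le0$ and $E_1[e^{\eta_0 H_0}]=e^{-\sqrt{-2\eta_0}}$ (from \eqref{eq:BS}) gives, on $G$,
\[
  E_1^{C\zeta,\eta_0}[H_0]
  \ge e^{\sqrt{-2\eta_0}}\,
     E_1\big[e^{\eta_0 H_0}\,H_0\,;\,B_s\in[0,M]\ \forall s\le H_0\big]
  =: c_0 \in(0,\infty),
\]
with $c_0$ depending only on $\eta_0$ and $M$. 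Here $c_0<\infty$ since $e^{\eta_0 H_0}$ decays exponentially while $H_0$ is polynomially integrable, and $c_0>0$ since with probability $\tfrac{M-1}{M}>0$ a Brownian motion from $1$ reaches $0$ before $M$, and on that event $0<H_0<\infty$.

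Finally I would take expectations over $\omega$: since $H_0\ge0$,
\[
  L_{C\xi}'(\eta_0)=\E\big[E_1^{C\zeta,\eta_0}[H_0]\big]
  \ge \E\big[E_1^{C\zeta,\eta_0}[H_0]\,\mathds 1_G\big]
  \ge c_0\,\P(G)\qquad\text{for all }C\ge1,
\]
and $\P(G)>0$ by an elementary Poisson covering argument (for instance, a Poisson point in each of $(\tfrac9{10},1)$ and $(1,\tfrac{11}{10})$ already forces $\xi\equiv\es$ on $[0,2]$, and this has positive probability). Hence $L_{C\xi}'(0-)\ge c_0\,\P(G)=:c_*>0$ uniformly in $C$, so $\frac{1}{L_{C\xi}'(0-)}\le\frac{1}{c_*}<\infty$ for all $C\in[1,\infty)$, which is the assertion. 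The only delicate point is precisely the uniformity in $C$, and this is exactly what the restriction to the event $G$ together with the confinement of the Brownian path to $[0,M]$ buys: there the $C$-dependent potential contribution vanishes identically. Everything else — differentiability of $L_{C\xi}$, monotonicity of $L_{C\xi}'$, the hitting-time identity \eqref{eq:BS}, and the positivity of $\P(G)$ — is soft or standard.
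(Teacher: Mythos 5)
Your proof is correct and follows essentially the same route as the paper's: both arguments hinge on localizing to the positive-probability environment event $\{\xi\equiv\es\text{ on }[0,2]\}$ and to Brownian paths confined there, so that the $C$-dependent integral $\int_0^{H_0}C(\xi(B_s)-\es)\,\d s$ vanishes identically, yielding a $C$-uniform lower bound on $L_{C\xi}'$. The only (immaterial) difference is that you evaluate at a fixed $\eta_0<0$ and invoke monotonicity of $L'$, whereas the paper passes directly to $\eta\to 0-$ by monotone convergence and bounds the denominator by $1$.
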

\begin{proof}
  Equation \eqref{eq:L'_tight}
  and monotone convergence entail that for all $C \in [1,\infty)$ we have
  \begin{align*}
    L_{C\xi}'(0-)  &=\E\Bigg[ \frac{E_1\big[
          e^{C\int_0^{H_0}(\xi(B_r) - \es)\diff r}H_0 \big]}{E_1\big[
          e^{C\int_0^{H_0}(\xi(B_r) - \es)\diff r} \big]} \Bigg].
  \end{align*}
  Since the expectation in the denominator on the right-hand side of the
  previous display is $\P$-a.s.\ upper bounded by $1$, we can continue
  the above to infer that for some positive constant $c>0$ and all
  $C \in [1,\infty)$ we have
  \begin{align*}
    L_{C\xi}'(0-)  &\ge             \E\Big[ E_1\big[
        e^{C\int_0^{H_0}(\xi(B_r) - \es)\diff r}H_0 \big] \cdot
      \mathds{1}_{\{\xi (x) = \es \, \forall x \in [0,2]\}}\Big]\\
    &\ge             \E\Big[ E_1[ H_0 \cdot \mathds{1}_{\{B_r \in [0,2] \, \forall r \in [0,H_0]\}}] \cdot \mathds{1}_{\{\xi (x) = \es \, \forall x \in [0,2]\}}\Big] \ge c > 0,
  \end{align*}
  which finishes the proof of the lemma.
\end{proof}

\begin{proposition} \label{prop:nonTriv}
There exist potentials  $\xi $ that
satisfy the assumptions of Theorem~\ref{thm:PAMBdd} as well as of
Theorems~\ref{thm:FKPPunBdd} and~\ref{thm:non-monotone}.
\end{proposition}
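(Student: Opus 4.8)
The plan is to take the potential $\xi$ from \eqref{eq:def_xi} and rescale it by a large multiplicative constant. Concretely, I would fix $\es,\ei$ with $0<\ei<\es$ and $\es/\ei>2$, construct $\xi$ via \eqref{eq:def_xi}, and consider the family $(C\xi)_{C\ge1}$. The first (routine) observation is that $C\xi$ is again of the form \eqref{eq:def_xi}, now with $\ei,\es$ replaced by $C\ei,C\es$; in particular the ratio $\es/\ei>2$ is unchanged, so \eqref{eq:relation_es_ei} persists and Theorems~\ref{thm:FKPPunBdd} and~\ref{thm:non-monotone} apply to $C\xi$. Likewise \eqref{eq:hoelder_cont} (with the same exponent and the constant multiplied by $C$), \eqref{eq:POT} (with bounds $C\ei,C\es$), \eqref{eq:STAT} and \eqref{eq:MIX} (unaffected by scaling, being distributional resp.\ $\sigma$-algebra conditions) are all inherited by $C\xi$ from $\xi$. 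Hence the entire task reduces to verifying \eqref{eq:VEL}, i.e.\ $v_0>v_c$, for the potential $C\xi$ once $C$ is large; below I write $v_c=v_c(C\xi)$ and $v_0=v_0(C\xi)$ for the critical velocity of Lemma~\ref{le:annealed_lgmf}~(d) and the front speed of Proposition~\ref{prop:lyapunov_tight} associated with $C\xi$.

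To control $v_c$ from above I would invoke Lemma~\ref{lem:Cxi} verbatim: as $v_c(C\xi)=1/L_{C\xi}'(0-)$, that lemma furnishes a finite constant $K$ with $v_c(C\xi)\le K$ for all $C\in[1,\infty)$. To bound $v_0$ from below I would produce an estimate diverging in $C$. Using the Feynman--Kac representation \eqref{eq:feynman_kac_PAM} with $u_0=\mathds{1}_{(-\infty,0]}$ and $\xi\ge\ei$, one obtains for every $v\ge0$
\begin{equation*}
  u_{C\xi}(t,vt)=E_{vt}\Big[\exp\Big\{\int_0^t C\xi(B_s)\,\d s\Big\}\mathds{1}_{(-\infty,0]}(B_t)\Big]\ge e^{C\ei t}\,P_{vt}(B_t\le0),
\end{equation*}
and since $\tfrac1t\ln P_{vt}(B_t\le0)\to-v^2/2$ by the Gaussian tail asymptotics, Proposition~\ref{prop:lyapunov_tight} yields $\Lambda_{C\xi}(v)\ge C\ei-v^2/2$ for all $v\ge0$. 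Since $\Lambda_{C\xi}$ is continuous with $\Lambda_{C\xi}(0)=C\es>0$ and has $v_0$ as its unique zero (Proposition~\ref{prop:lyapunov_tight}), it is strictly positive exactly on $[0,v_0)$; combined with the previous bound this forces $v_0\ge\sqrt{2C\ei}$. Choosing $C>K^2/(2\ei)$ then gives $v_0(C\xi)\ge\sqrt{2C\ei}>K\ge v_c(C\xi)$, so \eqref{eq:VEL} holds and $C\xi$ is the required potential.

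The real content --- and the only point needing care --- is the decoupling of the two velocities under rescaling: $v_0$ inflates like $\sqrt C$ since it is governed by the ballistic cost $\Lambda(0)=C\es$ of staying near the origin, whereas $v_c$ remains bounded precisely because, by Lemma~\ref{lem:Cxi}, the stretches on which $\xi$ equals $\es$ that are guaranteed by the Poisson construction keep $L_{C\xi}'(0-)$ bounded away from $0$ uniformly in $C$. All remaining ingredients (scaling invariance of the structural hypotheses on the potential, the Gaussian lower tail, and the sign structure of $\Lambda$) are standard.
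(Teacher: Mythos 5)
Your proposal is correct and follows essentially the same route as the paper: rescale a potential of the form \eqref{eq:def_xi} by a large constant $C$, use Lemma~\ref{lem:Cxi} to keep $v_c(C\xi)=1/L_{C\xi}'(0-)$ bounded, and show $v_0(C\xi)\to\infty$ by comparison with the constant potential $C\ei$. You merely spell out the latter comparison explicitly via Feynman--Kac and Gaussian tails, which the paper leaves as a one-line remark.
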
 
\begin{proof}
It is sufficient to find a potential $\xi$ as in 
\eqref{eq:def_xi}, under the sole assumption $\es/\ei >2$ of
\eqref{eq:relation_es_ei}, such that at the same time \eqref{eq:VEL} holds true for the respective potential. 

For this purpose, we choose an arbitrary potential $\xi$ as in 
\eqref{eq:def_xi} satisfying
\eqref{eq:relation_es_ei}
We then infer that for such a potential and $C \in (0,\infty)$ large enough,
one has---making explicit the dependence of the respective quantities on
the potential---that $v_0(C \xi) > v_c(C \xi).$
Indeed, note that Lemma \ref{lem:vcetc} entails
 $v_c(\xi)=\frac{1}{L'(0-)},$ 
and Lemma
\ref{lem:Cxi} implies that $\frac{1}{L'(0-)}$ is upper bounded away from
infinity for the potentials $C \xi$ as $C \to \infty.$ Regarding $v_0,$ a comparison
with the constant potentials $C \ei$ yields that $v_0(C\xi) \to \infty$
as $C \to \infty,$ so \eqref{eq:VEL}  holds true for all $C$ large enough, which is sufficient for the assumptions of Theorem~\ref{thm:PAMBdd} to be fulfilled.

 At the same time, the potential $C \xi$ still
satisfies \eqref{eq:relation_es_ei} and hence fulfills the assumptions of Theorems~\ref{thm:FKPPunBdd} and~\ref{thm:non-monotone}. 
\end{proof}

\bibliography{bibliothek}
\bibliographystyle{alpha}

\end{document}